\newtheorem{theorem}{Theorem}
\newtheorem{lemma}{Lemma}
\newtheorem{definition}{Definition}
\newtheorem{remark}{Remark}
\newcommand{\scalar}[1]{\langle #1\rangle}
\newcommand{\bigscalar}[1]{\Big\langle #1\Big\rangle}
\newcommand{\floor}[1]{\lfloor #1\rfloor}
\newcommand{\diff}[1]{\partial_{#1}}
\newcommand{\diffc}[1]{\bar{\partial}_{#1}}
\newcommand{\norm}[1]{\Vert #1\Vert}
\newcommand{\abs}[1]{\lvert #1\rvert}
\newcommand{\mR}{\mathbb{R}}
\newcommand{\mC}{\mathbb{C}}
\newcommand{\mN}{\mathbb{N}}
\newcommand{\mE}{\mathbb{E}}
\newcommand{\mS}{\mathbb{S}}
\newcommand{\mH}{\mathbb{H}}
\newcommand{\cE}{\mathcal{E}}
\newcommand{\cH}{\mathcal{H}}
\newcommand{\cP}{\mathcal{P}}
\newcommand{\cR}{\mathcal{R}}
\newcommand{\ubar}{\bar{u}}
\newcommand{\zbar}{\bar{z}}
\newcommand{\sbar}{\bar{s}}
\newcommand{\tbar}{\bar{t}}
\newcommand*\col[3][]{\begin{pmatrix}\ifx\relax#1\relax\else#1\\\fi{#2}\\{#3}\end{pmatrix}}
\begin{document}
\title{Plane wave formulas for spherical, complex and symplectic harmonics}
\author{H.\ De Bie\footnote{E-mail: {\tt Hendrik.DeBie@UGent.be}} \and F. Sommen\footnote{E-mail: {\tt Frank.Sommen@UGent.be}} \and M. Wutzig\footnote{E-mail: {\tt Michael.Wutzig@UGent.be}}}
\vspace{10mm}
\date{\small{Department of Mathematical Analysis\\ Faculty of Engineering and Architecture -- Ghent University}\\
\small{Krijgslaan 281, 9000 Gent, Belgium}\\ \vspace{5mm}
}
\maketitle

\begin{abstract}
\noindent This paper is concerned with spherical harmonics, and two refinements thereof: complex harmonics and symplectic harmonics.\\
\indent The reproducing kernels of the spherical and complex harmonics are explicitly given in terms of Gegenbauer or Jacobi polynomials. In the first part of the paper we determine the reproducing kernel for the space of symplectic harmonics, which is again expressible as a Jacobi polynomial of a suitable argument.\\
\indent In the second part we find plane wave formulas for the reproducing kernels of the three types of harmonics, expressing them as suitable integrals over Stiefel manifolds. This is achieved using Pizzetti formulas that express the integrals in terms of differential operators.\\
\newline
\noindent{\it Keywords\/}: reproducing kernels, spherical harmonics, symplectic harmonics, Pizzetti formulas, plane waves, Stiefel manifolds\\
\newline
\noindent{\it Mathematics Subject Classification\/}: 32A50, 42B35
\end{abstract}

\section{Introduction}
In the study of harmonic analysis on the sphere (see e.g. \cite{xu}), spherical harmonics play a crucial role. Spherical harmonics of degree $k$ are (the restrictions to the unit sphere of) harmonic polynomials of homogeneity $k$. In many applications, one is not so much interested in the harmonics themselves, but rather in their reproducing kernels. For spherical harmonics, the reproducing kernel is given in terms of Gegenbauer polynomials (see \cite{stein} and subsequent Theorem \ref{Gegenbauer}).\\
\indent Several refinements of spherical harmonics have been introduced over the last decades. Koornwinder (see \cite{koorn}) defined complex harmonics of degree $(p,q)$ as spherical harmonics which are homogeneous of order $p$ in the complexified variables and of order $q$ in the complex conjugated variables. He determined the reproducing kernel of the complex harmonics in terms of Jacobi polynomials (see also subsequent Theorem \ref{kernelC}), which was the first step in establishing his celebrated addition formula for the Jacobi polynomials.\\
\indent Recently, a further refinement of complex harmonics was introduced in \cite{symp}. They are called symplectic harmonics, as the space they span is invariant under the symplectic group. In addition to being $(p,q)$ complex harmonics, symplectic harmonics have to be in the kernel of a certain twisted Euler operator (see the subsequent Definition \ref{operators}).\\
\indent Let us now describe the main results of our paper. First we determine the reproducing kernel of the space of symplectic harmonics, by constructing a suitable projection operator. Surprisingly, as in the complex case, the result is expressed in terms of a Jacobi polynomial but now of a more complicated argument.\\
\indent In the second part of the paper, we construct integral formulas for the reproducing kernels of the three types of harmonics. They express the reproducing kernels as integrals over real or complex Stiefel manifolds of much simpler integrands given by plane wave polynomials. To achieve this, we make use of the Pizzetti formulas established in \cite{stiefel} that express the Stiefel integral as a series expansion in suitable differential operators.\\
\indent In the case of spherical harmonics, this integral formula was obtained in a completely different way in \cite{radon}. Moreover, there also exists an alternative integral expression over a similar geometric object, due to Sherman, where the integrand is a rational function \cite{sherman} instead of a polynomial.\\
\indent The paper is organized as follows. In Section \ref{pre} we fix some notations and summarize important results on spherical and complex harmonics and their respective reproducing kernels. In Section \ref{symplectic} we give a brief introduction to symplectic polynomials and symplectic harmonics. A projection operator onto these spaces is constructed and used to determine reproducing kernels for both spaces. Finally, in Section \ref{planewaves} we establish integral formulas for the reproducing kernels for spherical, complex and symplectic harmonics.
\section{Preliminaries}
\label{pre}
\subsection{Spherical harmonics}
Let the space of complex-valued polynomials on $\mR^m$ that are homogeneous of degree $k$ be denoted by $\cP_k$. Using a multi-index $\alpha = [\alpha_1,\cdots,\alpha_m]\in\mN^m$ and the conventional notation $x^{\alpha}=x_1^{\alpha_1}x_2^{\alpha_2}\cdots x_m^{\alpha_m}$ such a polynomial can be written as 
$P(x)=\sum_{|\alpha|=k}c_\alpha x^\alpha,$ with coefficients $c_\alpha\in\mC$, where the sum runs over all possible index sets of length $|\alpha|=\alpha_1+\cdots+\alpha_m=k$. Clearly, $\cP_k$ is the eigenspace of the Euler operator $\mE=\sum_{j=1}^mx_j\diff{x_j}$ with eigenvalue $k$.\\
\indent On the space of complex-valued polynomials (of arbitrary degree) $\cP=\bigoplus_{j=0}^\infty \cP_k$ the usual inner product 
is the so-called Fischer inner product (see \cite{Fi})
\begin{align*}
	\bigscalar{P(x),Q(x)}_\partial=\Big[\overline{P(\partial)}Q(x)\Big]_{x=0},
\end{align*}
where $P(\partial)$ arises by substituting every $x_j$ by $\diff{x_j}$ in $P(x)$. It is easy to see that
with respect to this inner product the homogeneous polynomial (as a function in $x$)
\begin{align*}
	Z_k(x,y)=\frac{\scalar{x,y}^k}{k!},\text{ with }\scalar{x,y}=\sum_{j=1}^mx_jy_j
\end{align*}
acts as a reproducing kernel on $\cP_k$, i.e.
\begin{align*}
	\bigscalar{Z_k(x,y),P_\ell(x)}_\partial=\delta_{kl}P_\ell(y)
\end{align*}
for any homogeneous polynomial $P_\ell\in\cP_\ell$. One immediately sees that $x_j$ and $\diff{x_j}$ are dual with respect to $\scalar{\cdot,\cdot}_\partial$ and therefore the dual of $\norm{x}^2=\sum_{j=1}^mx_j^2$ is the Laplacian $\Delta=\sum_{j=1}^m\diff{x_j}^2$. We have indeed for two polynomials $P$ and $Q$ that
\begin{align}
\label{dual}
	\bigscalar{\norm{x}^2P(x),Q(x)}_\partial=\bigscalar{P(x),\Delta Q(x)}_\partial.
\end{align}
The space of spherical harmonics $\cH_k$ of degree $k$ consists of $k$-homogeneous polynomials, that are in the kernel of the Laplacian, hence
\begin{align}
\label{dim}
	\cH_k=\Big\{H(x):H(x)\in\cP_k,\Delta H(x)=0\Big\}\text{ with }\dim{\cH_k}=\frac{2k+m-2}{k+m-2}{k+m-2\choose{m-2}}.
\end{align}
As homogeneous polynomials they are of course orthogonal with respect to the Fischer inner product. Another inner product on
$\cH_k$ is the spherical $L^2$ inner product, i.e.
\begin{align*}
	\bigscalar{P(x),Q(x)}_{\mS}=\frac{1}{\omega_{m-1}}\int_{\mS^{m-1}}\overline{P(x)}Q(x)d\sigma(x),
\end{align*}
where $d\sigma(x)$ is the usual Lebesgue measure on the sphere $\mS^{m-1}$ with surface $\omega_{m-1}=2\pi^{\frac{m}{2}}/\Gamma(\frac{m}{2})$. A very
useful property of these two inner products is their proportionality, which was shown in e.g. \cite{xu}. We state this result here, as it plays a central role in our paper.
\begin{theorem}
\label{tprop}
	For a spherical harmonic $H_k\in\cH_k$ and a homogeneous polynomial $P_k\in\cP_k$ of the same degree it holds that
	\begin{align}
	\label{prop}
		2^k\Big(\frac{m}{2}\Big)_k\bigscalar{H_k(x),P_k(x)}_{\mS}=\bigscalar{H_k(x),P_k(x)}_\partial.
	\end{align}
	For two spherical harmonics of different degree $H_k\in\cH_k$ and $Q_l\in\cH_l$, with $k\neq l$, one has in addition that
	\begin{align*}
		2^k\Big(\frac{m}{2}\Big)_k\bigscalar{H_k(x),Q_l(x)}_{\mS}=\bigscalar{H_k(x),Q_l(x)}_\partial=0,
	\end{align*}
	where $(a)_k=a(a+1)\cdots(a+k-1)$ denotes the Pochhammer symbol.
\end{theorem}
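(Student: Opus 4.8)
The plan is to connect the two inner products through a single Gaussian integral over $\mR^m$. The starting point is the Gaussian (Bargmann–Fischer) representation of the Fischer inner product: for all polynomials $P,Q$,
\begin{align*}
	\bigscalar{P(x),Q(x)}_\partial=\frac{1}{(2\pi)^{m/2}}\int_{\mR^m}\overline{\big(e^{-\Delta/2}P\big)(x)}\,\big(e^{-\Delta/2}Q\big)(x)\,e^{-\norm{x}^2/2}\,dx,
\end{align*}
where $e^{-\Delta/2}=\sum_{j\ge0}\frac{(-1)^j}{2^jj!}\Delta^j$ acts as a terminating sum on polynomials. I would establish this identity by testing it on the exponential generating functions $P(x)=e^{\scalar{a,x}}$ and $Q(x)=e^{\scalar{b,x}}$: the left-hand side equals $e^{\scalar{\bar a,b}}$ (since $e^{\scalar{\bar a,\partial}}$ is a translation operator and one evaluates at the origin), while the right-hand side, using $\Delta e^{\scalar{a,x}}=\scalar{a,a}e^{\scalar{a,x}}$ and completing the square in the resulting Gaussian integral, also collapses to $e^{\scalar{\bar a,b}}$. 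Comparing Taylor coefficients in $a$ and $b$ then yields the identity on all monomials, and hence on all of $\cP$ by sesquilinearity.

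With this in hand, the harmonicity of $H_k$ does the decisive work. Since $\Delta H_k=0$ we have $e^{-\Delta/2}H_k=H_k$, so
\begin{align*}
	\bigscalar{H_k(x),P_k(x)}_\partial=\frac{1}{(2\pi)^{m/2}}\int_{\mR^m}\overline{H_k(x)}\,\big(e^{-\Delta/2}P_k\big)(x)\,e^{-\norm{x}^2/2}\,dx.
\end{align*}
The operator $e^{-\Delta/2}$ produces $P_k$ together with terms homogeneous of degree $k-2,k-4,\dots$. Passing to polar coordinates $x=r\xi$ and using that every homogeneous polynomial of degree strictly less than $k$ restricts on $\mS^{m-1}$ to a sum of spherical harmonics of degree $<k$ — which are $\mS$-orthogonal to $H_k$ — I would argue that all these lower-order terms integrate to zero, so only the top term survives and the integral reduces to $\int_{\mR^m}\overline{H_k}\,P_k\,e^{-\norm{x}^2/2}\,dx$. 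Separating radial and spherical parts, the radial factor is $\int_0^\infty r^{2k+m-1}e^{-r^2/2}\,dr=2^{k+m/2-1}\Gamma(k+\tfrac m2)$ and the spherical factor is $\omega_{m-1}\bigscalar{H_k(x),P_k(x)}_{\mS}$. Collecting constants with $\omega_{m-1}=2\pi^{m/2}/\Gamma(\tfrac m2)$ gives precisely the factor $2^k\Gamma(k+\tfrac m2)/\Gamma(\tfrac m2)=2^k(\tfrac m2)_k$, which is the first claim.

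For the second claim, take $H_k\in\cH_k$ and $Q_l\in\cH_l$ with $k\neq l$. The Fischer inner product vanishes because it is orthogonal across distinct homogeneity degrees: $\overline{H_k(\partial)}Q_l$ vanishes at the origin, being either identically zero (when $l<k$) or homogeneous of positive degree $l-k$ (when $l>k$). The spherical inner product vanishes by the standard orthogonality of spherical harmonics of distinct degrees, so the asserted proportionality holds trivially since both sides are zero. The main obstacle I anticipate lies entirely in the first paragraph: pinning down the Gaussian representation with the correct normalization and justifying the twist by $e^{-\Delta/2}$. Everything afterwards is bookkeeping, the only genuinely structural input being the orthogonality of spherical harmonics of different degrees, which the excerpt already treats as known.
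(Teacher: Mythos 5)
Your proposal is correct, and there is in fact nothing in the paper to compare it against line by line: the paper states Theorem \ref{tprop} without proof, citing \cite{xu}. Your route — the Gaussian (Bargmann--Fischer) representation of the Fischer product via the heat-type twist $e^{-\Delta/2}$, tested on exponential generating functions — is a sound, self-contained argument, and I checked the bookkeeping: $e^{-\Delta/2}H_k=H_k$ by harmonicity, the radial integral $\int_0^\infty r^{2k+m-1}e^{-r^2/2}\,dr=2^{k+m/2-1}\Gamma(k+\frac m2)$, and the collection of constants against $\omega_{m-1}=2\pi^{m/2}/\Gamma(\frac m2)$ do yield exactly $2^k(\frac m2)_k$, and your monomial check $\scalar{x^\alpha,x^\beta}_\partial=\delta_{\alpha\beta}\,\alpha!$ is what the generating-function comparison encodes. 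One point you should make explicit to avoid an appearance of circularity: you invoke the $\mS$-orthogonality of spherical harmonics of distinct degrees twice — once to annihilate the lower-degree terms of $e^{-\Delta/2}P_k$ (after the Fischer decomposition of each homogeneous piece of degree $k-2j$ into harmonics of degree $<k$), and once for the second claim — yet that orthogonality is itself part of the theorem's second assertion. This is harmless because it has an independent one-line proof by Green's second identity on the unit ball: for harmonic $H_k,Q_l$,
\begin{align*}
0=\int_{\norm{x}\leq 1}\Big(\overline{H_k}\,\Delta Q_l-Q_l\,\Delta\overline{H_k}\Big)dx
=\int_{\mS^{m-1}}\Big(\overline{H_k}\,\partial_r Q_l-Q_l\,\partial_r\overline{H_k}\Big)d\sigma
=(l-k)\int_{\mS^{m-1}}\overline{H_k}\,Q_l\,d\sigma,
\end{align*}
using $\partial_r=\mE/r$ on homogeneous functions; citing or including this makes your proof fully self-contained. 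Compared with simply deferring to \cite{xu} as the paper does, your argument has the advantage of exhibiting \emph{why} the proportionality constant is $2^k(\frac m2)_k$ (it is precisely the $k$-dependence of the Gaussian radial moments), at the modest cost of the analytic preliminaries around $e^{-\Delta/2}$, which terminate on polynomials and pose no convergence issues.
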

A well-known fact is that homogeneous polynomials can be decomposed into spherical harmonics of lower degrees (see e.g. \cite{stein}). This Fischer decomposition is given in the following lemma.
\begin{lemma}
	For the space of homogeneous polynomials $\cP_k$ it holds that
	\begin{align*}
		\cP_k=\bigoplus_{j=0}^{\floor{\frac{k}{2}}}\norm{x}^{2j}\cH_{k-2j}.
	\end{align*}
\end{lemma}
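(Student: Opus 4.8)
The plan is to reduce the full decomposition to the single-step splitting
\[
\cP_k = \cH_k \oplus \norm{x}^2\cP_{k-2}
\]
and then to iterate. First I would note that $\norm{x}^2\cP_{k-2}$ is a subspace of $\cP_k$, so it suffices to identify its orthogonal complement inside $\cP_k$ with respect to the (positive definite) Fischer inner product. The central computation uses the duality relation (\ref{dual}): for a fixed $Q\in\cP_k$ and every $P\in\cP_{k-2}$ one has $\scalar{\norm{x}^2P(x),Q(x)}_\partial=\scalar{P(x),\Delta Q(x)}_\partial$. Hence $Q$ is Fischer-orthogonal to all of $\norm{x}^2\cP_{k-2}$ if and only if $\scalar{P,\Delta Q}_\partial=0$ for every $P\in\cP_{k-2}$. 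Since $\Delta Q\in\cP_{k-2}$ and the Fischer inner product is non-degenerate on $\cP_{k-2}$, this is equivalent to $\Delta Q=0$, i.e.\ to $Q\in\cH_k$. Therefore $(\norm{x}^2\cP_{k-2})^\perp=\cH_k$ inside $\cP_k$, and because the Fischer product is an honest inner product on the finite-dimensional space $\cP_k$, we obtain the orthogonal direct sum $\cP_k=\cH_k\oplus\norm{x}^2\cP_{k-2}$.

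Second, I would run an induction on $k$, with the trivial base cases $\cP_0=\cH_0$ and $\cP_1=\cH_1$, since constant and linear polynomials are automatically annihilated by $\Delta$. Applying the single-step splitting to $\cP_{k-2}$ gives $\cP_{k-2}=\cH_{k-2}\oplus\norm{x}^2\cP_{k-4}$; multiplying through by $\norm{x}^2$ and substituting yields $\cP_k=\cH_k\oplus\norm{x}^2\cH_{k-2}\oplus\norm{x}^4\cP_{k-4}$. Repeating this until the exponent of $\norm{x}^2$ reaches $\floor{k/2}$ produces the claimed decomposition $\cP_k=\bigoplus_{j=0}^{\floor{k/2}}\norm{x}^{2j}\cH_{k-2j}$. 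At each stage I would record that multiplication by $\norm{x}^{2j}$ is injective on $\cP_{k-2j}$ (polynomials form an integral domain), so that each summand $\norm{x}^{2j}\cH_{k-2j}$ is a faithful copy of $\cH_{k-2j}$ and the directness of the sum is preserved.

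The step I expect to be the crux is the equivalence ``$Q$ orthogonal to $\norm{x}^2\cP_{k-2}\iff\Delta Q=0$'', which is precisely where the duality (\ref{dual}) and the non-degeneracy of the Fischer inner product do the work; the remainder is bookkeeping of the iteration. An alternative to the inductive exhaustion step would be a pure dimension count, verifying that $\sum_{j=0}^{\floor{k/2}}\dim\cH_{k-2j}$ telescopes to $\dim\cP_k$ via the formula in (\ref{dim}); however, I would prefer the inner-product argument, since it delivers directness and exhaustion simultaneously and avoids any combinatorial identity.
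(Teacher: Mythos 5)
Your proof is correct. The paper does not actually prove this lemma---it records it as a classical fact with a pointer to the literature (Stein--Weiss)---so there is no in-paper argument to compare against; your route, establishing the single-step orthogonal splitting $\cP_k=\cH_k\oplus\norm{x}^2\cP_{k-2}$ via the duality (\ref{dual}) together with non-degeneracy of the Fischer inner product, and then iterating with the observation that multiplication by $\norm{x}^{2}$ is injective, is precisely the standard proof of this Fischer decomposition. The one ingredient you use but only assert is that the Fischer product is an honest (Hermitian, positive definite) inner product on each $\cP_k$; this is immediate since distinct monomials are Fischer-orthogonal with $\scalar{x^\alpha,x^\alpha}_\partial=\alpha!>0$, so the gap is cosmetic rather than real.
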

\noindent Mapping a given $k$-homogeneous polynomial to its harmonic component of degree $k-2\ell$ can be done by means of the projection operator (see e.g. \cite{orst} and \cite{sob})
\begin{align}
	\label{oproj}
	Proj^ k_{\ell}=\sum_{j=0}^{\floor{\frac{k}{2}}-\ell}\alpha_j\norm{x}^{2j}\Delta^{j+\ell},
\end{align}
with $\alpha_j=\frac{(-1)^j(\frac{m}{2}+k-2\ell-1)}{4^{j+\ell}j!\ell!}\frac{\Gamma(\frac{m}{2}+k-2\ell-j-1)}{\Gamma(\frac{m}{2}+k-\ell)}$. By construction we have
\begin{align}
\label{proj}
	Proj^k_\ell\big(\norm{x}^{2j}H_{k-2j}\big)=\delta_{j\ell}H_{k-2j},
\end{align}
for the spherical harmonic $H_{k-2j}\in\cH_{k-2j}$. It is a classical result that the reproducing kernel on $\cH_k$ can be given in terms of a Gegenbauer polynomial, compare e.g. with \cite{stein} where it is called the zonal harmonic.
\begin{theorem}
\label{Gegenbauer}
	For a spherical harmonic $H_j\in\cH_j$ it holds that
	\begin{align*}
		\bigscalar{K_k(x,y),H_j(x)}_\mS=\delta_{jk}H_j(y),
	\end{align*}
	where
	\begin{align*}
		K_k(x,y)=\frac{k+\mu}{\mu}\norm{x}^k\norm{y}^kC_k^\mu(t),
	\end{align*}
	with $\mu=\frac{m}{2}-1$, $t=\frac{\scalar{x,y}}{\norm{x} \norm{y} }$ and $C_k^\mu(t)$ the Gegenbauer polynomial.
\end{theorem}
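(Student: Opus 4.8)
The plan is to obtain $K_k$ by projecting the Fischer reproducing kernel of $\cP_k$ onto its harmonic part, and then transferring from the Fischer inner product to the spherical one via the proportionality of Theorem \ref{tprop}. First I would recall that $Z_k(x,y)=\scalar{x,y}^k/k!$ reproduces on all of $\cP_k$ with respect to $\scalar{\cdot,\cdot}_\partial$, and that the Fischer-orthogonal projection of $\cP_k$ onto the top summand $\cH_k$ of the Fischer decomposition is exactly the operator $Proj^k_0$ from \eqref{oproj}. Applying $Proj^k_0$ in the $x$-variable therefore produces the Fischer reproducing kernel on $\cH_k$,
\[
F_k(x,y)=Proj^k_0\big[Z_k(x,y)\big],\qquad \bigscalar{F_k(x,y),H_j(x)}_\partial=\delta_{jk}H_j(y),
\]
where the Kronecker delta appears because harmonics of different degrees are Fischer-orthogonal as homogeneous polynomials of different degree.

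Next I would pass to the spherical inner product. Since $F_k$ is harmonic of degree $k$ in $x$, Theorem \ref{tprop} gives $\bigscalar{F_k,H_j}_\mS=\big(2^j(\tfrac m2)_j\big)^{-1}\bigscalar{F_k,H_j}_\partial$ for every $H_j\in\cH_j$, so the rescaled kernel $K_k:=2^k(\tfrac m2)_k\,F_k$ satisfies $\bigscalar{K_k(x,y),H_j(x)}_\mS=\delta_{jk}H_j(y)$, which is precisely the asserted reproducing property. It then only remains to identify $K_k$ explicitly as a Gegenbauer polynomial.

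For the explicit computation I would use the identity $\Delta_x^j\scalar{x,y}^k=\tfrac{k!}{(k-2j)!}\norm{y}^{2j}\scalar{x,y}^{k-2j}$, obtained by iterating $\Delta_x\scalar{x,y}^k=k(k-1)\norm{y}^2\scalar{x,y}^{k-2}$. Substituting this together with the coefficients $\alpha_j$ (taken at $\ell=0$) into the sum defining $Proj^k_0$, then factoring out $\norm{x}^k\norm{y}^k$ and writing $\scalar{x,y}^{k-2j}=\norm{x}^{k-2j}\norm{y}^{k-2j}t^{k-2j}$, collapses $F_k$ to $\norm{x}^k\norm{y}^k$ times an explicit polynomial of degree $k$ in $t$ of the correct parity.

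The final and most delicate step is the coefficient matching. I would compare the resulting coefficient of $t^{k-2j}$, after multiplication by $2^k(\tfrac m2)_k$ and the substitution $\tfrac m2=\mu+1$, with the standard series $C_k^\mu(t)=\sum_{j}\tfrac{(-1)^j\Gamma(k-j+\mu)}{\Gamma(\mu)\,j!\,(k-2j)!}(2t)^{k-2j}$. The Gamma-quotients hidden in $\alpha_j$ should telescope against $2^k(\tfrac m2)_k=2^k(\mu+1)_k$ and, using $\mu\,\Gamma(\mu)=\Gamma(\mu+1)$, reproduce exactly the Gegenbauer coefficients multiplied by the overall factor $\tfrac{k+\mu}{\mu}$. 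This bookkeeping with Pochhammer and Gamma identities is where care is required, but no conceptual obstacle remains once the projection has been carried out.
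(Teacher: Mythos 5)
Your proposal is correct and takes essentially the same route the paper itself indicates: the paper treats Theorem \ref{Gegenbauer} as classical (citing Stein--Weiss) and records exactly your strategy in Theorem \ref{projkernel}, namely $K_k(x,y)=2^k\big(\tfrac m2\big)_k\,Proj^k_0\big(Z_k(x,y)\big)$, with the passage between the Fischer and spherical inner products supplied by Theorem \ref{tprop}. Your execution also checks out in detail: using $\Delta_x^j\scalar{x,y}^k=\tfrac{k!}{(k-2j)!}\norm{y}^{2j}\scalar{x,y}^{k-2j}$ and the coefficients $\alpha_j$ at $\ell=0$, the rescaled projection collapses, via $\big(\tfrac m2\big)_k=(\mu+1)_k$ and $\mu\,\Gamma(\mu)=\Gamma(\mu+1)$, to $\tfrac{k+\mu}{\mu}\norm{x}^k\norm{y}^k C_k^\mu(t)$ as required.
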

One way to derive this kernel is by projecting the homogeneous kernel $Z_k$ onto the harmonics of the same degree (see Theorem 1.2.6. in \cite{xu}). We mention this strategy here as we will use it again in Section \ref{symplectic}.
\begin{theorem}
\label{projkernel}
	The reproducing kernel $K_k(x,y)$ of spherical harmonics can be given as the harmonic projection $K_k(x,y)=c_k Proj^k_0\Big(Z_k(x,y)\Big)$ 
	of the homogeneous kernel $Z_k(x,y)=\frac{\scalar{x,y}^k}{k!}$, where $c_k=2^k\big(\frac{m}{2}\big)_k$.
\end{theorem}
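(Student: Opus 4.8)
The plan is to verify directly that $\tilde K_k(x,y):=c_k\, Proj^k_0\big(Z_k(x,y)\big)$ satisfies the defining property of the reproducing kernel from Theorem \ref{Gegenbauer}, and then to invoke uniqueness of the reproducing kernel on the finite-dimensional space $\cH_k$. First I would observe that, by \eqref{proj} applied with $j=0$ together with the Fischer decomposition, $Proj^k_0$ sends any element of $\cP_k$ to its harmonic component of degree $k$; hence $\tilde K_k(\cdot,y)\in\cH_k$ for each fixed $y$, so it is a legitimate candidate. It then remains to show that
\begin{align*}
	\bigscalar{\tilde K_k(x,y),H_j(x)}_\mS=\delta_{jk}H_j(y)
\end{align*}
for every $H_j\in\cH_j$.

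For $j\neq k$ this is immediate: since $\tilde K_k(\cdot,y)\in\cH_k$ and $H_j\in\cH_j$ are spherical harmonics of different degrees, the second part of Theorem \ref{tprop} forces $\bigscalar{\tilde K_k(x,y),H_j(x)}_\mS=0$. For $j=k$, the idea is to pass from the spherical inner product to the Fischer inner product. Because $Proj^k_0 Z_k(\cdot,y)$ is harmonic of degree $k$ and $c_k$ is a positive real number, the proportionality \eqref{prop} applies with first argument $Proj^k_0 Z_k$ and yields
\begin{align*}
	\bigscalar{\tilde K_k(x,y),H_k(x)}_\mS=c_k\bigscalar{Proj^k_0 Z_k(x,y),H_k(x)}_\mS=\bigscalar{Proj^k_0 Z_k(x,y),H_k(x)}_\partial,
\end{align*}
the constant $c_k=2^k\big(\tfrac{m}{2}\big)_k$ being exactly the proportionality factor appearing in \eqref{prop}.

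The core of the argument is the evaluation of the right-hand side. Writing $Proj^k_0=\sum_{j=0}^{\floor{k/2}}\alpha_j\norm{x}^{2j}\Delta^j$ and moving each factor $\norm{x}^2$ onto the second argument by the duality \eqref{dual}, I obtain
\begin{align*}
	\bigscalar{Proj^k_0 Z_k(x,y),H_k(x)}_\partial=\sum_{j=0}^{\floor{k/2}}\alpha_j\bigscalar{\Delta^j Z_k(x,y),\Delta^j H_k(x)}_\partial.
\end{align*}
Since $H_k$ is harmonic, $\Delta^j H_k=0$ for every $j\geq 1$, so only the $j=0$ term survives; a short computation with the explicit formula for $\alpha_j$ gives $\alpha_0=1$, whence the sum collapses to $\bigscalar{Z_k(x,y),H_k(x)}_\partial=H_k(y)$ by the reproducing property of $Z_k$ on $\cP_k$. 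Combining the three displays yields $\bigscalar{\tilde K_k(x,y),H_k(x)}_\mS=H_k(y)$, which together with the $j\neq k$ case identifies $\tilde K_k$ as the reproducing kernel, so that $\tilde K_k=K_k$. The only genuinely delicate points are confirming that the proportionality constant in \eqref{prop} coincides with $c_k$ and verifying $\alpha_0=1$ from the definition of the $\alpha_j$; everything else is bookkeeping with the duality relation \eqref{dual} and the harmonicity of $H_k$.
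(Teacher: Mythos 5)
Your proposal is correct and takes essentially the same route as the paper: Theorem \ref{projkernel} is stated there by appeal to the projection strategy (citing Theorem 1.2.6 of \cite{xu}), and the argument the paper carries out in detail for the symplectic analogues in Section \ref{symplectic} is exactly your chain of passing from the spherical to the Fischer inner product via the proportionality \eqref{prop}, using the self-adjointness of $Proj^k_0$ through the duality \eqref{dual} so that harmonicity of $H_k$ kills every $j\geq 1$ term, and finishing with the Fischer reproducing property of $Z_k$. Your checks that $\alpha_0=1$ and the degree-orthogonality handling of the $j\neq k$ case are sound, so nothing further is needed.
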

\subsection{Complex harmonics}
When considering functions defined on an even-dimensional vector space one can apply a complex structure to identify $\mR^{2n}$ with
the complex vector space $\mC^n$. Vectors $z=[z_1,\cdots,z_n]\in\mC^n$ have the coordinates $z_j=x_j+ix_{n+j}$ and complex conjugation is denoted
by $\zbar_j=x_j-ix_{n+j}$. Complex derivatives are defined in the usual way as $\diff{z_j}=\frac{1}{2}(\diff{x_j}-i\diff{x_{n+j}})$ and $\diffc{z_j}=\frac{1}{2}(\diff{x_j}+i\diff{x_{n+j}})$. The Euler operator can now be split into two complex conjugated Euler operators as $\mE=\mE_z+\bar{\mE}_z$, where
\begin{align*}
	\mE_z=\sum_{j=1}^nz_j\diff{z_j}\hspace{1cm}\text{and}\hspace{1cm}\bar{\mE}_z=\sum_{j=1}^n\zbar_j\diffc{z_j}.
\end{align*}
Bihomogeneous polynomials of degree $(p,q)$ are eigenfunctions of these two operators with the respective eigenvalues, hence
\begin{align*}
	\cP_{p,q}=\{P(z,\bar{z}):\mE_zP(z,\bar{z})=pP(z,\bar{z}),\bar{\mE}_zP(z,\bar{z})=qP(z,\bar{z})\}.
\end{align*}
Note that from here on we will write $P(z)$, rather than $P(z,\zbar)$, to indicate that a function depends on both $z$ and $\zbar$, and hence on the vector $x$. The space of $k$-homogeneous polynomials can be split further into spaces of bihomogeneous polynomials, i.e. $\cP_k=\bigoplus_{j=0}^k\cP_{j,k-j}$. As in the real case, homogeneous polynomials of different (bi-)degrees are orthogonal with respect to a Fischer inner product that is defined as
\begin{align}
\label{fischer}
	\bigscalar{P(z),Q(z)}_{\partial}=\Big[\overline{P(\partial)}Q(z)\Big]_{z=0}.
\end{align}
Here $P(\partial)$ is obtained by substituting every $z_j$ by $\diffc{z_j}$ and every $\zbar_j$ by $\diff{z_j}$ in $P(z)$. This definition differs from the one in the real case by a factor of $2$. For the sake of readability we will nonetheless use the same notation in both cases. With respect to this complex Fischer inner product we have for $P_{r,s}\in\cP_{r,s}$ that
\begin{align*}
	\bigscalar{Z_{p,q}(z,u),P_{r,s}(z)}_\partial=\delta_{pr}\delta_{qs}P_{r,s}(u)
\end{align*}
with the reproducing kernel
\begin{align}
\label{zpq}
	Z_{p,q}(z,u)=\frac{\scalar{z,\ubar}^p\scalar{\zbar,u}^q}{p!q!},
\end{align}
for the complex vectors $z,u\in\mC^n$ and the Hermitian inner product $\scalar{z,\ubar}=\sum_{j=1}^nz_j\ubar_j$. Note that this definition is different from the standard one. We use it to avoid confusion in Section \ref{planewaves}.\\
\indent Spherical harmonics of bi-degree $(p,q)$ are defined as harmonic bihomogeneous polynomials of the same order (see e.g. \cite{stras}, \cite{sapiro}), hence
\begin{align}
\label{dimC}
	\cH_{p,q}=\Big\{H(z):H(z)\in\cP_{p,q},\Delta_zH(z)=0\Big\}\text{ with }\dim{\cH_{p,q}}=\frac{n+p+q-1}{n-1}{q+n-2\choose{n-2}}{p+n-2\choose{n-2}},
\end{align}
where $\Delta_z=\sum_{j=1}^{n}\diffc{z_j}\diff{z_j}$. Note that this definition of the Laplacian is proportional to the one given above, as $4\Delta_z=\Delta$. The spherical $L^2$ inner product on $\cH_{p,q}$ now reads as
\begin{align*}
	\bigscalar{P(z),Q(z)}_\mS=\frac{1}{\omega_{2n-1}}\int_{\mS^{2n-1}}\overline{P(z)}Q(z)d\sigma(z)
\end{align*}
and satisfies the same proportionality relation with the (complex) Fischer inner product, that was given in Theorem \ref{tprop}. We restate this relation for the current framework in the following lemma.
\begin{lemma}
\label{cprop}
	For a spherical harmonic $H_{p,q}\in\cH_{p,q}$ and a homogeneous polynomial $P_{p,q}\in\cP_{p,q}$ it holds, that
	\begin{align*}
		\big(n\big)_{p+q}\bigscalar{H_{p,q}(z),P_{p,q}(z)}_{\mS}=\bigscalar{H_{p,q}(z),P_{p,q}(z)}_\partial.
	\end{align*}
\end{lemma}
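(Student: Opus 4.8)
The plan is to deduce the statement from the real proportionality of Theorem \ref{tprop} under the identification $\mR^{2n}\cong\mC^n$; the only genuine work is to reconcile the real and complex Fischer inner products, which on $\cP_{p,q}$ differ by an explicit power of $2$. Throughout I write $\bigscalar{\cdot,\cdot}_\partial^{\mR}$ for the real Fischer product (with $x_j\leftrightarrow\diff{x_j}$) and $\bigscalar{\cdot,\cdot}_\partial^{\mC}$ for the complex one of \eqref{fischer}.

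First I record the elementary reductions. Since $\mS^{2n-1}$ is literally the unit sphere of $\mR^{2n}$ and the measure is the same, the two spherical $L^2$ products coincide. Because $4\Delta_z=\Delta$, a complex harmonic $H_{p,q}\in\cH_{p,q}$ lies in the kernel of the real Laplacian and, as each $z_j,\zbar_j$ is homogeneous of degree $1$ in the real variables, it is real-homogeneous of degree $p+q$; hence $H_{p,q}\in\cH_{p+q}$, while likewise $P_{p,q}\in\cP_{p,q}\subset\cP_{p+q}$. Applying Theorem \ref{tprop} with $m=2n$ and $k=p+q$ then yields $2^{p+q}(n)_{p+q}\bigscalar{H_{p,q},P_{p,q}}_\mS=\bigscalar{H_{p,q},P_{p,q}}_\partial^{\mR}$, using $(\tfrac{2n}{2})_{p+q}=(n)_{p+q}$.

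It remains to show that $\bigscalar{P,Q}_\partial^{\mR}=2^{p+q}\bigscalar{P,Q}_\partial^{\mC}$ for all $P,Q\in\cP_{p,q}$; substituting this into the previous identity and cancelling $2^{p+q}$ gives the lemma. The key is an adjointness comparison. From the defining property $\bigscalar{x_jP,Q}_\partial^{\mR}=\bigscalar{P,\diff{x_j}Q}_\partial^{\mR}$ (constant-coefficient operators commute with evaluation at $0$), conjugate-linearity in the first slot together with $z_j=x_j+ix_{n+j}$ and $2\diff{z_j}=\diff{x_j}-i\diff{x_{n+j}}$ gives $\bigscalar{z_jP,Q}_\partial^{\mR}=2\bigscalar{P,\diff{z_j}Q}_\partial^{\mR}$, and similarly $\bigscalar{\zbar_jP,Q}_\partial^{\mR}=2\bigscalar{P,\diffc{z_j}Q}_\partial^{\mR}$; for the complex product the corresponding adjoints are $\diff{z_j}$ and $\diffc{z_j}$ without the factor $2$. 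Moving every variable of a monomial $z^\alpha\zbar^\beta$ onto the second argument as a derivative then makes both products diagonal in the monomial basis, with $\bigscalar{z^\alpha\zbar^\beta,z^\alpha\zbar^\beta}_\partial^{\mR}=2^{\abs{\alpha}+\abs{\beta}}\alpha!\beta!$ and $\bigscalar{z^\alpha\zbar^\beta,z^\alpha\zbar^\beta}_\partial^{\mC}=\alpha!\beta!$; on $\cP_{p,q}$ one has $\abs{\alpha}+\abs{\beta}=p+q$, which is exactly the claimed factor. The only delicate point is this bookkeeping of the power $2^{p+q}$ between the two Fischer products; everything else is a direct appeal to Theorem \ref{tprop} and the geometric identification.
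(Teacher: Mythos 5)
Your proof is correct and follows exactly the route the paper intends: the paper states Lemma \ref{cprop} without proof, remarking only that the factor $2^k$ from Theorem \ref{tprop} (applied with $m=2n$, $k=p+q$) is absorbed by the definition of the complex Fischer inner product in (\ref{fischer}). Your monomial computation $\bigscalar{z^\alpha\zbar^\beta,z^\alpha\zbar^\beta}_\partial^{\mR}=2^{\abs{\alpha}+\abs{\beta}}\alpha!\beta!$ versus $\alpha!\beta!$ is precisely the bookkeeping that justifies that remark, so you have simply made the paper's implicit argument explicit.
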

\noindent Note that the factor $2^k$ in equation (\ref{prop}) has been omitted here because of the definition of the complex Fischer inner product given in (\ref{fischer}). As before, spherical harmonics of different bi-degrees are orthogonal with respect to both inner products. We therefore have an orthogonal decomposition of bihomogeneous polynomials $\cP_{p,q}$ into harmonics of lower degrees.
\begin{lemma}
	For the space of homogeneous polynomials $\cP_{p,q}$ of order $(p,q)$ it holds that
	\begin{align*}
		\cP_{p,q}=\bigoplus_{j=0}^{\min(p,q)}\norm{z}^{2j}\cH_{p-j,q-j},\text{ where }\norm{z}^2=\sum_{j=1}^n{\abs{z_j}^2}=\scalar{z,\zbar}.
	\end{align*}
\end{lemma}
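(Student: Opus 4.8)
The plan is to imitate the operator-theoretic proof of the classical Fischer decomposition, but carried out directly in the complex, bigraded setting. The two operators in play are multiplication by $\norm{z}^2=\scalar{z,\zbar}$, which maps $\cP_{p,q}$ into $\cP_{p+1,q+1}$, and the complex Laplacian $\Delta_z=\sum_{j=1}^n\diffc{z_j}\diff{z_j}$, which maps $\cP_{p,q}$ into $\cP_{p-1,q-1}$; by definition $\cH_{p,q}=\ker\Delta_z\cap\cP_{p,q}$.

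First I would establish that these two operators are adjoint with respect to the complex Fischer inner product (\ref{fischer}), i.e. the exact complex analogue of (\ref{dual}):
\begin{align*}
	\bigscalar{\norm{z}^2P(z),Q(z)}_\partial=\bigscalar{P(z),\Delta_z Q(z)}_\partial.
\end{align*}
This follows from the duality built into the substitution rule defining (\ref{fischer}): after the conjugation $\overline{P(\partial)}$ is taken into account, multiplication by $z_j$ is Fischer-adjoint to $\diff{z_j}$ and multiplication by $\zbar_j$ is Fischer-adjoint to $\diffc{z_j}$. Hence multiplication by $z_j\zbar_j$ is adjoint to $\diffc{z_j}\diff{z_j}$ (the two commute), and summing over $j$ gives the displayed identity.

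Then I would exploit finite dimensionality. Multiplication by $\norm{z}^2\colon\cP_{p-1,q-1}\to\cP_{p,q}$ is injective, since the polynomial ring is an integral domain, so its Fischer-adjoint $\Delta_z\colon\cP_{p,q}\to\cP_{p-1,q-1}$ is surjective. Consequently
\begin{align*}
	\cP_{p,q}=\ker\Delta_z\oplus(\ker\Delta_z)^\perp=\cH_{p,q}\oplus\norm{z}^2\cP_{p-1,q-1},
\end{align*}
an \emph{orthogonal} splitting for the Fischer inner product, where the identification $(\ker\Delta_z)^\perp=\operatorname{ran}(\norm{z}^2\,\cdot)$ uses the adjointness relation established above together with surjectivity of $\Delta_z$.

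Finally I would iterate this one-step splitting on the factor $\cP_{p-1,q-1}$, then on $\cP_{p-2,q-2}$, and so on, obtaining
\[
	\cP_{p,q}=\bigoplus_{j\ge0}\norm{z}^{2j}\cH_{p-j,q-j}.
\]
The recursion terminates precisely at $j=\min(p,q)$: taking $q\le p$ without loss of generality, once $q-j=0$ the target space $\cP_{p-j-1,-1}$ is trivial, so $\Delta_z$ vanishes identically on $\cP_{p-q,0}$ and that space is already entirely harmonic, contributing the last summand $\norm{z}^{2\min(p,q)}\cH_{p-\min(p,q),q-\min(p,q)}$. The main point to get right is this termination bookkeeping together with the adjointness relation; everything else is routine, and a dimension count against (\ref{dimC}) provides an independent confirmation.
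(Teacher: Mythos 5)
Your proof is correct, but it is worth saying up front that the paper does not actually prove this lemma: it is stated without proof, justified only by the preceding remark that harmonics of different bidegrees are orthogonal with respect to both inner products. In effect the authors treat it as the bigraded refinement of the already-quoted real Fischer decomposition $\cP_k=\bigoplus_j\norm{x}^{2j}\cH_{k-2j}$: since multiplication by $\norm{z}^2$ has bidegree $(1,1)$ and $\Delta_z$ maps $\cP_{p,q}$ to $\cP_{p-1,q-1}$, sorting that decomposition by bidegree (using $\cP_k=\bigoplus_{p+q=k}\cP_{p,q}$ and the fact that the bihomogeneous components of a harmonic polynomial are themselves harmonic) yields the claim. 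Your route is instead a self-contained operator-theoretic proof, and every step checks out against the paper's conventions: with the substitution rule of (\ref{fischer}) ($z_j\mapsto\diffc{z_j}$, $\zbar_j\mapsto\diff{z_j}$, followed by conjugation), multiplication by $z_j$ (resp.\ $\zbar_j$) is indeed Fischer-adjoint to $\diff{z_j}$ (resp.\ $\diffc{z_j}$), so $\norm{z}^2$ is adjoint to $\Delta_z$, exactly the complex analogue of (\ref{dual}); the complex Fischer form is positive definite (the monomials $z^\alpha\zbar^\beta$ are orthogonal with squared norm $\alpha!\beta!$), which legitimizes the finite-dimensional splitting $\cP_{p,q}=\ker\Delta_z\oplus\operatorname{ran}(\norm{z}^2\,\cdot)$; and your termination bookkeeping at $j=\min(p,q)$ via $\cP_{p-q,0}\subseteq\cH_{p-q,0}$ (for $q\le p$, since $\diffc{z_j}$ annihilates holomorphic polynomials) is right. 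Two minor refinements: surjectivity of $\Delta_z$ is not needed, because in finite dimensions $(\ker T)^\perp=\operatorname{ran}T^*$ holds unconditionally (though your injectivity argument is a clean way to obtain it); and in the iteration it is precisely the injectivity of multiplication by $\norm{z}^2$ that transports directness of the inner splittings up to $\cP_{p,q}$, a point you correctly flag. What your approach buys is a complete proof independent of the real case, plus the orthogonality of the first summand at each stage; what the paper's (implicit) route buys is brevity, deducing the complex statement from the classical one.
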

\noindent In order to establish an addition formula for Jacobi polynomials, Koornwinder found in \cite{koorn2} and \cite{koorn3} the reproducing kernel of the space of bihomogeneous harmonics $\cH_{p,q}$. We recall this result for the case of $p\leq q$ in the following theorem.
\begin{theorem}
\label{kernelC}
	For a spherical harmonic $H_{r,s}\in\cH_{r,s}$ it holds that
	\begin{align*}
		\bigscalar{K_{p,q}(z,u),H_{r,s}(z)}_\mS=\delta_{pr}\delta_{qs}H_{r,s}(u)
	\end{align*}	
	with
	\begin{align*}
		K_{p,q}(z,u)=a_{p,q}\scalar{z,\ubar}^{p-\nu}\scalar{\zbar,u}^{q-\nu}\norm{z}^{2\nu}\norm{u}^{2\nu}P_\nu^{n-2,\abs{p-q}}(2s-1),
	\end{align*}
	where $a_{p,q}=\frac{n-1+k}{n-1}\binom{k-\nu+n-2}{k-\nu}$, $k=p+q$, $\nu=\min(p,q)$, the angular variable 
	$s=\frac{|\scalar{z,\ubar}|^2}{\norm{z}^2\norm{u}^2}$ and $P_\nu^{n-2,\abs{p-q}}$ the Jacobi polynomial.
\end{theorem}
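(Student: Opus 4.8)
The plan is to mirror the strategy of Theorem \ref{projkernel}: produce $K_{p,q}$ by projecting the homogeneous Fischer kernel $Z_{p,q}(z,u)$ of (\ref{zpq}) onto $\cH_{p,q}$ in the variable $z$, and then read off the Jacobi polynomial from the resulting explicit sum. Throughout I assume $p\le q$, so that $\nu=p$, exactly as in the statement. First I would reduce the spherical reproducing identity to a Fischer one. Any candidate $K_{p,q}(\cdot,u)\in\cH_{p,q}$ is, by the orthogonality of harmonics of distinct bidegrees, automatically $\mS$-orthogonal to every $\cH_{r,s}$ with $(r,s)\neq(p,q)$, so the content is the diagonal case. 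By the proportionality of Lemma \ref{cprop} (applied with $K_{p,q}$ in the harmonic slot), the identity $\bigscalar{K_{p,q},H_{p,q}}_\mS=H_{p,q}(u)$ for all $H_{p,q}\in\cH_{p,q}$ is equivalent to $\bigscalar{K_{p,q},H_{p,q}}_\partial=(n)_{p+q}H_{p,q}(u)$. Thus it suffices to exhibit an element of $\cH_{p,q}$ that Fischer-reproduces $\cH_{p,q}$ with constant $(n)_{p+q}$, and then to identify it with the stated formula.

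Next I would build that element. Let $\pi_{p,q}\colon\cP_{p,q}\to\cH_{p,q}$ be the orthogonal projection onto the leading summand of the complex Fischer decomposition $\cP_{p,q}=\bigoplus_{j}\norm{z}^{2j}\cH_{p-j,q-j}$; as in (\ref{oproj}) it has the form $\pi_{p,q}=\sum_{j\ge0}\gamma_j\norm{z}^{2j}\Delta_z^{\,j}$, the coefficients $\gamma_j$ being fixed by the requirement that it annihilate each $\norm{z}^{2j}\cH_{p-j,q-j}$ with $j\ge1$ and fix $\cH_{p,q}$ (the complex analogue of (\ref{proj})). Set $\tilde K_{p,q}(z,u)=(n)_{p+q}\,\pi_{p,q}Z_{p,q}(z,u)$. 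Since $\norm{z}^2$ and $\Delta_z$ are Fischer-adjoint (the complex version of (\ref{dual})), the operator $\norm{z}^{2j}\Delta_z^{\,j}$ is self-adjoint for $\scalar{\cdot,\cdot}_\partial$, hence so is $\pi_{p,q}$; therefore for $H_{p,q}\in\cH_{p,q}$, using that $Z_{p,q}$ is the Fischer kernel on $\cP_{p,q}$,
\[\bigscalar{\tilde K_{p,q},H_{p,q}}_\partial=(n)_{p+q}\bigscalar{Z_{p,q},\pi_{p,q}H_{p,q}}_\partial=(n)_{p+q}\bigscalar{Z_{p,q},H_{p,q}}_\partial=(n)_{p+q}H_{p,q}(u).\]
By construction $\tilde K_{p,q}\in\cH_{p,q}$, so by the first step it is the sought reproducing kernel, and uniqueness gives $\tilde K_{p,q}=K_{p,q}$.

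It remains to compute $\pi_{p,q}Z_{p,q}$ in closed form, which is the crux. A direct calculation gives $\Delta_z^{\,j}\big(\scalar{z,\ubar}^p\scalar{\zbar,u}^q\big)=\tfrac{p!}{(p-j)!}\tfrac{q!}{(q-j)!}\norm{u}^{2j}\scalar{z,\ubar}^{p-j}\scalar{\zbar,u}^{q-j}$, because each application of $\Delta_z$ lowers both exponents by one and produces a factor $\scalar{\ubar,u}=\norm{u}^2$. Substituting into $\pi_{p,q}$, using $\scalar{z,\ubar}\scalar{\zbar,u}=\abs{\scalar{z,\ubar}}^2$ and the definition $s=\abs{\scalar{z,\ubar}}^2/(\norm{z}^2\norm{u}^2)$, one factors out the common term $\scalar{\zbar,u}^{q-p}\norm{z}^{2p}\norm{u}^{2p}$ and is left with a polynomial of degree $p$ in $s$, namely $\sum_{j=0}^{p}\tfrac{\gamma_j}{(p-j)!(q-j)!}\,s^{\,p-j}$. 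The last part matches the shape of the asserted kernel, since for $p\le q$ the prefactor $\scalar{z,\ubar}^{p-\nu}\scalar{\zbar,u}^{q-\nu}\norm{z}^{2\nu}\norm{u}^{2\nu}$ reduces to precisely $\scalar{\zbar,u}^{q-p}\norm{z}^{2p}\norm{u}^{2p}$.

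The main obstacle is the final identification of this finite sum with $P_p^{\,n-2,q-p}(2s-1)$. I would match it against a standard hypergeometric representation of the Jacobi polynomial, using $P_\nu^{(\alpha,\beta)}(2s-1)=\tfrac{(\alpha+1)_\nu}{\nu!}\,{}_2F_1(-\nu,\nu+\alpha+\beta+1;\alpha+1;1-s)$ with $\alpha=n-2$, $\beta=q-p$, transformed (via a Pfaff transformation) into powers of $s$; concretely, I would check that the ratio of consecutive coefficients of the projection sum equals the ${}_2F_1$ coefficient ratio, which forces the two polynomials to agree up to a scalar. Inserting the explicit $\gamma_j$ and simplifying the ratios of Gamma factors is the genuinely laborious step, and the overall normalization $a_{p,q}$ is then pinned down by comparing a single coefficient, such as the value at $s=0$ or the top-degree $s^{p}$ term. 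An alternative that sidesteps computing the $\gamma_j$ is to verify directly that the stated $K_{p,q}$ lies in $\cH_{p,q}$ — bihomogeneity of bidegree $(p,q)$ is immediate, and $\Delta_zK_{p,q}=0$ follows from the Jacobi differential equation satisfied by $P_p^{\,n-2,q-p}$ — and then to check the Fischer normalization as above; there the obstacle merely shifts to the harmonicity computation.
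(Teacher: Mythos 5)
Your proposal is correct and is essentially the approach the paper itself endorses: the paper recalls Theorem \ref{kernelC} from Koornwinder without proof, but records precisely your strategy as Theorem \ref{projkernelC} (namely $K_{p,q}=(n)_{p+q}Proj^{p,q}_0\big(Z_{p,q}\big)$, with the reduction to the Fischer inner product via Lemma \ref{cprop} and the self-adjointness of the projector), and it carries out your exact computational template --- iterate $\Delta_z$ on $Z_{p,q}$ to get $\norm{u}^{2j}$ factors, reverse the summation, recognize the terminating ${}_2F_1$ as a Jacobi polynomial, and swap parameters via $(-1)^pP_p^{\alpha,\beta}(x)=P_p^{\beta,\alpha}(-x)$ --- in the symplectic analogues (Lemma \ref{projC} and Theorem \ref{projSH}). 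The identification you defer does close as planned: substituting the paper's coefficients $\beta_j$ from Theorem \ref{projkernelC} into your sum $\sum_{j=0}^p\beta_j\,s^{p-j}/\big((p-j)!(q-j)!\big)$ and comparing with $P_p^{q-p,n-2}(1-2s)$ yields $(n)_{p+q}Proj^{p,q}_0\big(Z_{p,q}\big)=a_{p,q}\scalar{\zbar,u}^{q-p}\norm{z}^{2p}\norm{u}^{2p}P_p^{n-2,q-p}(2s-1)$ with exactly the stated normalization $a_{p,q}=\frac{n-1+k}{n-1}\binom{q+n-2}{q}$ for $p\leq q$.
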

As before it is possible to derive the harmonic kernel by a projection of the homogeneous kernel of the same degree. Because any bihomogeneous harmonic of bi-degree $(p,q)$ is also a spherical harmonic of degree $p+q$ we can use the projection operator that is given in $(\ref{oproj})$. Rewriting it in terms of complex variables only changes the corresponding constants.
\begin{theorem}
\label{projkernelC}
The reproducing kernel of $\cH_{p,q}$ can be given as	$K_{p,q}(z,u)=(n)_{p+q}Proj^{p,q}_0\big(Z_{p,q}(z,u)\big)$,
where $Z_{p,q}$ is the homogeneous kernel given in (\ref{zpq}) and the projection from $\cP_{p,q}$ to $\cH_{p-l,q-l}$ is
\begin{align*}
	Proj^{p,q}_{\ell}=\sum_{j=0}^{\min(p,q)-\ell}\beta_j\norm{z}^{2j}\Delta_z^{j+\ell},\text{ with }
	\beta_j=\frac{(-1)^j(n-1+p+q-2\ell)}{j!\ell!}\frac{(n-2+p+q-j-2\ell)!}{(n-1+p+q-\ell)!}.
\end{align*}
\end{theorem}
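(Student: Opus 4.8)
The plan is to mirror the proof of the real statement (Theorem \ref{projkernel}): show that applying the harmonic projection $Proj^{p,q}_0$ to the Fischer reproducing kernel $Z_{p,q}(\cdot,u)$ of $\cP_{p,q}$ produces, up to the scalar $(n)_{p+q}$, the reproducing kernel of $\cH_{p,q}$ for the spherical inner product. Since $Z_{p,q}(z,u)=\scalar{z,\ubar}^p\scalar{\zbar,u}^q/(p!q!)$ has bidegree $(p,q)$ in $z$, once $Proj^{p,q}_0$ is known to map $\cP_{p,q}$ onto $\cH_{p,q}$ the function $Proj^{p,q}_0(Z_{p,q}(\cdot,u))$ is automatically a bihomogeneous harmonic of bidegree $(p,q)$, and it remains only to check the reproducing identity against an arbitrary $H_{r,s}\in\cH_{r,s}$.

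First I would record the two algebraic facts that drive everything. The action of the complex Laplacian on the decomposition pieces is $\Delta_z(\norm{z}^{2j}H)=j(n+a+b+j-1)\norm{z}^{2(j-1)}H$ for $H\in\cH_{a,b}$, proved by the one-line computation $\Delta_z(\norm{z}^2 F)=(n+\mE_z+\bar{\mE}_z)F+\norm{z}^2\Delta_z F$ followed by induction on $j$. Iterating this and substituting into $Proj^{p,q}_\ell=\sum_j\beta_j\norm{z}^{2j}\Delta_z^{j+\ell}$ reduces the defining relation $Proj^{p,q}_\ell(\norm{z}^{2j}H_{p-j,q-j})=\delta_{j\ell}H_{p-j,q-j}$ to a single terminating summation identity in the $\beta_j$; this is the complex counterpart of (\ref{proj}), obtained from the real operator (\ref{oproj}) by setting $m=2n$ and $k=p+q$, the factor $4^{j+\ell}$ in $\alpha_j$ being absorbed by $\Delta=4\Delta_z$. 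The second fact is the Fischer duality $\bigscalar{\norm{z}^2 P(z),Q(z)}_\partial=\bigscalar{P(z),\Delta_z Q(z)}_\partial$, the complex analogue of (\ref{dual}), which is immediate because under the complex Fischer inner product $z_j$ and $\zbar_j$ are dual to $\diff{z_j}$ and $\diffc{z_j}$, so $\norm{z}^2=\sum_j z_j\zbar_j$ is dual to $\Delta_z=\sum_j\diffc{z_j}\diff{z_j}$.

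With these in hand the reproducing identity is quick. For $(r,s)\neq(p,q)$ both arguments are harmonics of distinct bidegrees, hence orthogonal, and $\bigscalar{K_{p,q}(z,u),H_{r,s}(z)}_\mS=0$. For $(r,s)=(p,q)$ I would apply the proportionality of Lemma \ref{cprop} to pass from the spherical to the Fischer inner product,
\begin{align*}
(n)_{p+q}\bigscalar{Proj^{p,q}_0(Z_{p,q}(z,u)),H_{p,q}(z)}_\mS=\bigscalar{Proj^{p,q}_0(Z_{p,q}(z,u)),H_{p,q}(z)}_\partial,
\end{align*}
then expand $Proj^{p,q}_0=\sum_j\beta_j\norm{z}^{2j}\Delta_z^j$ and move each $\norm{z}^{2j}$ across by Fischer duality:
\begin{align*}
\bigscalar{\norm{z}^{2j}\Delta_z^j Z_{p,q}(z,u),H_{p,q}(z)}_\partial=\bigscalar{\Delta_z^j Z_{p,q}(z,u),\Delta_z^j H_{p,q}(z)}_\partial.
\end{align*}
Since $H_{p,q}$ is harmonic every term with $j\geq1$ vanishes, leaving only $\beta_0\bigscalar{Z_{p,q}(z,u),H_{p,q}(z)}_\partial$. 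A direct evaluation gives $\beta_0=1$, and the reproducing property of the homogeneous kernel yields $\bigscalar{Z_{p,q}(z,u),H_{p,q}(z)}_\partial=H_{p,q}(u)$, completing the verification.

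The genuine difficulty sits in the second paragraph: confirming that the stated constants $\beta_j$ really do realize the harmonic projection, i.e. that the summation identity collapsing $Proj^{p,q}_\ell(\norm{z}^{2j}H_{p-j,q-j})$ to $\delta_{j\ell}$ holds. The reproducing-kernel step itself is effortless once the Fischer duality is available, precisely because pairing against a harmonic annihilates all but the $j=0$ term; the only numerical check needed there is $\beta_0=1$, which is immediate from the formula for $\beta_j$ at $j=\ell=0$.
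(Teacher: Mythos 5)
Your proposal is correct and follows essentially the same route as the paper, which proves Theorem \ref{projkernelC} simply by observing that the real projector (\ref{oproj}) with $m=2n$, $k=p+q$ carries over verbatim, the factor $4^{j+\ell}$ in $\alpha_j$ being absorbed by $\Delta=4\Delta_z$ exactly as you note, and then invoking the same projection-plus-Fischer-duality argument as in Theorem \ref{projkernel}. Your write-up in fact supplies more detail than the paper (the explicit commutation relation $\Delta_z(\norm{z}^2F)=(n+\mE_z+\bar{\mE}_z)F+\norm{z}^2\Delta_zF$, the verification $\beta_0=1$, and the duality computation $\bigscalar{\norm{z}^{2j}\Delta_z^jZ_{p,q},H_{p,q}}_\partial=\bigscalar{\Delta_z^jZ_{p,q},\Delta_z^jH_{p,q}}_\partial=0$ for $j\geq 1$), all of which checks out.
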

\section{Symplectic harmonics}
\label{symplectic}
If the dimension of the underlying vector space is doubled once more and is therefore divisible by 4, a quaternionic structure can be applied to identify complex vectors $z=[z_1,\cdots,z_n,z_{n+1},\cdots,z_{2n}]\in\mC^{2n}\simeq\mR^{4n}$ with quaternionic vectors $\tilde{z}=[\tilde{z}_1,\cdots,\tilde{z}_n]\in\mH^n$, with $\tilde{z}_\ell=z_\ell+z_{n+\ell}j$, where $j^2=-1$ and $ij=-ji$. Quaternionic conjugation is denoted by $\tilde{z}^c_\ell=\zbar_\ell-j\zbar_{n+\ell}$ and the quaternionic inner product is defined as
\begin{align*}
	\scalar{\tilde{z},\tilde{u}^c}=\sum_{\ell=1}^n\tilde{z}_\ell\tilde{u}^c_\ell=\scalar{z,\ubar}-\scalar{z,u}_{\hspace{-1.5pt}s}j.
\end{align*}
For the modulus of this inner product one gets
\begin{align*}
\abs{\scalar{\tilde{z},\tilde{u}^c}}^2&=\scalar{\tilde{z},\tilde{u}^c}\big(\scalar{\tilde{z},\tilde{u}^c}\big)^c=\big(\scalar{z,\ubar}-\scalar{z,u}_{\hspace{-1.5pt}s}j\big)\big(\scalar{\zbar,u}+j\overline{\scalar{z,u}}_{\hspace{-1.5pt}s}\big)=\abs{\scalar{z,\ubar}}^2+\abs{\scalar{z,u}_{\hspace{-1.5pt}s}}^2.
\end{align*}
Here $\scalar{z,u}_{\hspace{-1.5pt}s}=\sum_{\ell=1}^nz_\ell u_{n+\ell}-z_{n+\ell}u_\ell$ is an (anti-symmetric) product of the complex vectors $z,u\in\mC^{2n}$. The group under which the quaternionic inner product is invariant is the symplectic group $Sp(n)$ that can be represented as a subgroup of the unitary matrices $U(2n)$. The Laplacian $\Delta_z$, the Euler operators $\mE_z$ and $\bar{\mE}_z$, as well as $r^2=\norm{z}^2$ are invariant under the unitary action and hence under $Sp(n)$. In \cite{symp} the authors derived two additional operators that are invariant under $Sp(n)$. These operators
\begin{align*}
	\cE&=\sum_{j=1}^n\big(z_j\bar{\partial}_{z_{n+j}}-z_{n+j}\diffc{z_j}\big)\\
	\cE^\dagger&=-\sum_{j=1}^n\big(\zbar_j\diff{z_{n+j}}-\zbar_{n+j}\diff{z_j}\big),
\end{align*}
act on the spaces of (complex valued) homogeneous polynomials of bi-degree $(p,q)$ as
\begin{align*}
	\cE:\cP_{p,q}&\rightarrow\cP_{p+1,q-1},\\
	\cE^\dagger:\cP_{p,q}&\rightarrow\cP_{p-1,q+1}.
\end{align*}
With the Euler operators they satisfy the commutator relations of the Lie algebra $\mathfrak{sl}_2$, i.e.
\begin{align}
	\label{one}[\bar{\mE}_z-\mE_z,\cE^\dagger]&=2\cE^\dagger\\
	\label{two}[\bar{\mE}_z-\mE_z,\cE]&=-2\cE\\
	\label{three}[\cE^\dagger,\cE]&=\bar{\mE}_z-\mE_z.
\end{align}
Depending on the degree of homogeneity one can define symplectic homogeneous polynomials as null solutions of exactly one of these operators.
\begin{definition}
\label{operators}
	The space $\cR_{p,q}\subset\cP_{p,q}$ consists of bihomogeneous polynomials $P_{p,q}$ that satisfy
	\begin{align}
		\label{symp1}\cE P_{p,q}&=0,\hspace{1cm}\text{for }p\geq q\\
		\label{symp2}\cE^\dagger P_{p,q}&=0,\hspace{1cm}\text{for }p\leq q.
	\end{align}
	Elements of $\cR_{p,q}$ are called symplectic homogeneous polynomials.
\end{definition}
\noindent Note that for $p<q$ one has that $\cR_{p,q}\cap \ker\cE=\{0\}$ and conversely for $p>q$ that $\cR_{p,q}\cap \ker\cE^\dagger=\{0\}$. The operators $\cE$ and $\cE^\dagger$ and their properties have been studied in \cite{symp}. In the following lemma we restate one of these results.
\begin{lemma}
\label{lem1}
	For the symplectic homogeneous polynomial $R_{p,q}\in\cR_{p,q}$ it holds that
	\begin{align}
		\label{G}\big(\cE\big)^{a}\big(\cE^\dagger)^{b}R_{p,q}
		&=\kappa_{a,b}^{q,p}(\cE^\dagger)^{b-a}R_{p,q},\hspace{10pt}
		\text{for }p\geq q\\
		\label{H}\big(\cE^\dagger\big)^{a}\big(\cE)^{b}R_{p,q}
		&=\kappa_{a,b}^{p,q}(\cE)^{b-a}R_{p,q},\hspace{10pt}\text{for }p\leq q,
	\end{align}
	where
	\begin{align*}
	\kappa_{a,b}^{p,q}=
	\begin{cases}
		0,&a>b\\
		\frac{b!}{(b-a)!}\frac{(q-p-b+a)!}{(q-p-b)!},&a\leq b
	\end{cases}
	\end{align*}
\end{lemma}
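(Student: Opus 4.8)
The plan is to recognise the commutator relations (\ref{one})--(\ref{three}) as those of an $\mathfrak{sl}_2$-triple and to read off both identities from the standard action of the raising and lowering operators on a highest, respectively lowest, weight vector. Concretely, I would set $H=\bar{\mE}_z-\mE_z$, $E=\cE^\dagger$ and $F=\cE$; then (\ref{three}) reads $[E,F]=H$, while (\ref{one}) and (\ref{two}) read $[H,E]=2E$ and $[H,F]=-2F$. On $\cP_{p,q}$ the operator $H$ acts as the scalar $q-p$, since $\mE_z$ and $\bar{\mE}_z$ have eigenvalues $p$ and $q$. For $p\le q$ an element $R_{p,q}\in\cR_{p,q}$ satisfies $\cE^\dagger R_{p,q}=0$, i.e.\ $ER_{p,q}=0$, so it is a highest weight vector; for $p\ge q$ it satisfies $\cE R_{p,q}=0$, i.e.\ $FR_{p,q}=0$, a lowest weight vector. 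I would treat (\ref{H}) (the case $p\le q$) in detail, the case (\ref{G}) being symmetric under the exchange $E\leftrightarrow F$, $H\mapsto-H$.

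First I would prove the one-step commutator identity
\begin{align*}
[E,F^{b}]=b\,F^{b-1}\big(H-(b-1)\big)
\end{align*}
by induction on $b$, using $HF^{j}=F^{j}(H-2j)$ (a consequence of $[H,F]=-2F$) to move $H$ through the powers of $F$ and summing the resulting arithmetic progression. Applied to the highest weight vector $v=R_{p,q}$, for which $Hv=(q-p)v$ and $Ev=0$, the boundary term $F^{b}Ev$ drops out and one obtains
\begin{align*}
EF^{b}v=b\,(q-p-b+1)\,F^{b-1}v.
\end{align*}

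Next I would iterate. Writing $E^{a}F^{b}v=c_{a}\,F^{b-a}v$ and applying the previous display with $b$ replaced by $b-a+1$ gives the recursion $c_{a}=c_{a-1}(b-a+1)(q-p-b+a)$, hence
\begin{align*}
c_{a}=\prod_{j=0}^{a-1}(b-j)(q-p-b+j+1)=\frac{b!}{(b-a)!}\,\frac{(q-p-b+a)!}{(q-p-b)!}=\kappa_{a,b}^{p,q}.
\end{align*}
The factor $(b-j)$ forces $c_{a}=0$ as soon as $a>b$, matching the first branch of $\kappa$; this establishes (\ref{H}). Running the symmetric computation for (\ref{G}) on a lowest weight vector produces $F^{a}E^{b}v=d_{a}E^{b-a}v$ with $d_{a}=(-1)^{a}\frac{b!}{(b-a)!}(q-p+b-a)_{a}$, and the sign reflection $(-1)^{a}(q-p+b-a)_{a}=(p-q-b+a)!/(p-q-b)!$ turns this into $\kappa_{a,b}^{q,p}$.

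The $\mathfrak{sl}_2$ algebra itself is routine; the only point requiring care is the bookkeeping of constants — verifying that the telescoping product of linear factors collapses exactly to the stated ratio of factorials, and that in the $p\ge q$ branch the accumulated sign $(-1)^{a}$ is absorbed correctly into the reflected factorials $(p-q-b+a)!/(p-q-b)!$. I expect this constant-tracking, rather than any structural difficulty, to be the main obstacle.
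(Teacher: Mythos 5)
Your proof is correct. The paper itself does not prove Lemma \ref{lem1} --- it restates the result from \cite{symp} without proof --- and your argument, identifying $(\cE^\dagger,\cE,\bar{\mE}_z-\mE_z)$ as an $\mathfrak{sl}_2$-triple acting with weight $q-p$ on $\cP_{p,q}$ and iterating $[E,F^{b}]=b\,F^{b-1}(H-b+1)$ on the highest (resp.\ lowest) weight vector $R_{p,q}$, is exactly the standard computation underlying the cited result; your constants, including the telescoping product $\prod_{j=0}^{a-1}(b-j)(q-p-b+j+1)$ and the sign absorption $(-1)^{a}(q-p+b-a)_{a}=(p-q-b+a)!/(p-q-b)!$ in the $p\geq q$ branch, all check out against the stated $\kappa_{a,b}^{p,q}$.
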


\noindent There also exists a Fischer decomposition with respect to the operators $\cE$ and $\cE^\dagger$, see \cite{symp}. There, the authors considered spaces of harmonic symplectic polynomials but the statement and proof remain valid for symplectic polynomials in general. We therefore
give the decomposition of $\cP_{p,q}$ into spaces of symplectic homogeneous polynomials without proof.
\begin{lemma}
	For the space $\cP_{p,q}(\mR^{4n},\mC)$ it holds that
	\begin{align}
		\label{fischer1}\cP_{p,q}&=\bigoplus_{j=0}^q\big(\cE^\dagger)^j\cR_{p+j,q-j}\hspace{10pt}\text{for }p\geq q,\\
		\label{fischer2}\cP_{p,q}&=\bigoplus_{j=0}^p\big(\cE)^j\cR_{p-j,q+j}\hspace{10pt}\text{for }p\leq q.
	\end{align}
\end{lemma}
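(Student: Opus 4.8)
The plan is to recognise the three operators $\cE^\dagger$, $\cE$ and $\bar{\mE}_z-\mE_z$ as an $\mathfrak{sl}_2$-triple and to read both decompositions off the representation theory of $\mathfrak{sl}_2$. Relations (\ref{one})--(\ref{three}) say exactly that, setting $X=\cE^\dagger$, $Y=\cE$ and $H=\bar{\mE}_z-\mE_z$, one has $[H,X]=2X$, $[H,Y]=-2Y$ and $[X,Y]=H$. Since $\mE_z$ and $\bar{\mE}_z$ act on $\cP_{p,q}$ as the scalars $p$ and $q$, the Cartan element $H$ acts on $\cP_{p,q}$ as the scalar $q-p$, so $\cP_{p,q}$ is precisely the weight space of weight $q-p$. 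Because $\cE$ and $\cE^\dagger$ both preserve the total degree $k=p+q$, I would fix $k$ and work inside the finite-dimensional space $V_k=\bigoplus_{a+b=k}\cP_{a,b}$, which is then a finite-dimensional $\mathfrak{sl}_2$-module whose weight-space decomposition is exactly its bidegree decomposition.

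Next I would invoke complete reducibility (valid since we are over $\mC$ and $\mathfrak{sl}_2(\mC)$ is semisimple): $V_k$ is a direct sum of standard irreducibles, each of which is the $\cE$-string generated from a single highest weight vector, i.e.\ a vector annihilated by the raising operator $\cE^\dagger$. For the case $p\leq q$ (weight $q-p\geq 0$) these highest weight vectors are, by Definition \ref{operators}, exactly the spaces $\cR_{a,b}$ with $a\leq b$. An irreducible whose highest weight vector $R\in\cR_{a,b}$ has weight $b-a$ contributes to the weight-$(q-p)$ space precisely the vector $\cE^{j}R$, where $j$ is fixed by $a+j=p$ and $b-j=q$, that is $a=p-j$, $b=q+j$. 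Collecting over all irreducibles whose string reaches weight $q-p$ gives
\begin{align*}
	\cP_{p,q}=\bigoplus_{j=0}^{p}\cE^{j}\cR_{p-j,q+j},
\end{align*}
the range $0\leq j\leq p$ coming from $p-j\geq 0$ (the constraint $a\leq b$ being automatic for $p\leq q$). The statement for $p\geq q$ is obtained verbatim with the roles of $\cE$ and $\cE^\dagger$ interchanged: one uses lowest weight vectors (annihilated by $\cE$), which by Definition \ref{operators} are the spaces $\cR_{a,b}$ with $a\geq b$, and raises by $(\cE^\dagger)^{j}$ to reach weight $q-p\leq 0$, yielding (\ref{fischer1}).

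The only content beyond this bookkeeping is that the sums are direct, which I would split into two facts. First, for distinct $j$ the spaces $\cR_{p-j,q+j}$ consist of highest weight vectors of distinct highest weights $\lambda_j=(q+j)-(p-j)=q-p+2j$, hence lie in distinct isotypic components of $V_k$, which are linearly independent; thus $\sum_j\cE^{j}R_j=0$ forces $\cE^{j}R_j=0$ for each $j$. Second, $\cE^{j}$ is injective on $\cR_{p-j,q+j}$: in the string picture it acts as $Y^{j}$ on a highest weight vector of weight $\lambda_j$, and $Y^{j}$ is injective there exactly when $j\leq\lambda_j$, i.e.\ $0\leq q-p+j$, which holds since $p\leq q$. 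Both facts can be made fully explicit through Lemma \ref{lem1}: evaluating $(\cE)^{a}(\cE^\dagger)^{b}$ on the candidate terms returns a multiple of a lower power with the explicit constant $\kappa$, so any linear relation reduces to a triangular system whose diagonal entries are these constants. The main obstacle is therefore not the spanning, which is automatic once complete reducibility is invoked, but the verification that the relevant $\kappa$-constants are nonzero in the stated range so that the decomposition is genuinely direct; this non-vanishing is exactly what Lemma \ref{lem1} supplies, and the verification for $p\geq q$ is identical after the swap $\cE\leftrightarrow\cE^\dagger$.
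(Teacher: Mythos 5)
Your argument is correct, but be aware that the paper itself contains no proof of this lemma: it is imported from \cite{symp}, with only the remark that the proof given there for harmonic symplectic polynomials remains valid for general ones. Your $\mathfrak{sl}_2$ route is the standard mechanism one would expect behind the cited result, and all the steps check out: relations (\ref{one})--(\ref{three}) do make $(\cE^\dagger,\cE,\bar{\mE}_z-\mE_z)$ an $\mathfrak{sl}_2$-triple, the space $\bigoplus_{a+b=k}\cP_{a,b}$ is finite-dimensional so Weyl's theorem applies, weight spaces coincide with bidegree spaces (since $a+b=k$ and $b-a=\lambda$ determine $(a,b)$), and your two directness checks are exactly what is needed --- distinct $j$ give distinct highest weights $q-p+2j$, hence distinct isotypic components, and $\cE^j$ is injective on $\cR_{p-j,q+j}$ because $\kappa_{j,j}^{p-j,q+j}=j!\,\frac{(q-p+2j)!}{(q-p+j)!}\neq 0$ by Lemma \ref{lem1}. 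One small point you glossed over: for $a=b$, Definition \ref{operators} imposes \emph{both} $\cE R=0$ and $\cE^\dagger R=0$, whereas your highest-weight description uses only $\ker\cE^\dagger$; this is harmless, since a weight-zero vector annihilated by the raising operator spans a trivial summand of the finite-dimensional module and is therefore automatically annihilated by $\cE$ as well, so $\ker\cE^\dagger\cap\cP_{a,a}=\cR_{a,a}$ --- but it deserves a line. What your approach buys over the paper's economy of citation is self-containedness, and it dovetails neatly with the paper's own machinery: the constants $\kappa_{a,b}^{p,q}$ of Lemma \ref{lem1} are precisely the structure constants $X^aY^b$ of your weight strings, and the operator $Proj^{p,q}_{\cE^\dagger}$ of Lemma \ref{projSymp}, whose construction in the paper logically \emph{presupposes} the decomposition (\ref{fischer2}), is exactly an explicit realization of the projection onto the $j=0$ summand that your isotypic-component argument produces abstractly.
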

\noindent In order to avoid redundancy we will assume $p\leq q$ from now on. The results for the case $p\geq q$ can be 
derived by simply exchanging the roles of $p$ and $q$ and those of $\cE$ and $\cE^\dagger$.\\
\indent In order to find a reproducing kernel in this setting we will mimic the construction in the case of complex harmonics given in Theorem \ref{projkernelC}. Instead of projecting the
homogeneous kernel of degree $(p,q)$ onto $\cH_{p,q}$, we will project it onto $\cR_{p,q}$. In the following lemma we construct a differential operator
for such a projection.
\begin{lemma}
\label{projSymp}
	The operator
	\begin{align*}
		Proj^{p,q}_{\cE^\dagger}=\sum_{j=0}^p\gamma_j\big(\cE\big)^j\big(\cE^\dagger\big)^j,
		\hspace{10pt}\text{with }\gamma_j=\frac{(-1)^j}{j!}\frac{(\abs{q-p}+1)!}{(\abs{q-p}+1+j)!}
	\end{align*}
	projects a homogeneous polynomial of order $(p,q)$ (with $p\leq q$) to its symplectic component of the same order, i.e.
	\begin{align*}
		Proj^{p,q}_{\cE^\dagger}\Big(\cE^jR_{p-j,q+j}\Big)=\delta_{0j}\cE^jR_{p-j,q+j}.
	\end{align*}
\end{lemma}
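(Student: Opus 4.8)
The plan is to verify the identity directly on each summand of the Fischer decomposition (\ref{fischer2}), that is, on a polynomial of the form $\cE^j R_{p-j,q+j}$ with $R_{p-j,q+j}\in\cR_{p-j,q+j}$ and $0\le j\le p$. Applying the operator gives $Proj^{p,q}_{\cE^\dagger}\big(\cE^j R_{p-j,q+j}\big)=\sum_{i=0}^p\gamma_i(\cE)^i(\cE^\dagger)^i(\cE)^j R_{p-j,q+j}$, so the first step is to evaluate the mixed expression $(\cE^\dagger)^i(\cE)^j R_{p-j,q+j}$. Since $p\le q$ forces $p-j\le q+j$, the polynomial $R_{p-j,q+j}$ is annihilated by $\cE^\dagger$, and Lemma \ref{lem1}, equation (\ref{H}), applies with $(p,q)$ replaced by $(p-j,q+j)$. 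It yields $(\cE^\dagger)^i(\cE)^j R_{p-j,q+j}=\kappa_{i,j}^{p-j,q+j}(\cE)^{j-i}R_{p-j,q+j}$, which vanishes for $i>j$ and, for $i\le j$ (all of which lie in the range $0,\dots,p$ because $j\le p$), reduces each term to a scalar times $(\cE)^j R_{p-j,q+j}$. Recombining the powers of $\cE$, I expect
\begin{align*}
Proj^{p,q}_{\cE^\dagger}\big(\cE^j R_{p-j,q+j}\big)=\Big(\sum_{i=0}^{j}\gamma_i\,\kappa_{i,j}^{p-j,q+j}\Big)\,(\cE)^j R_{p-j,q+j},
\end{align*}
so the whole statement reduces to showing that the scalar $c_j:=\sum_{i=0}^{j}\gamma_i\,\kappa_{i,j}^{p-j,q+j}$ equals $\delta_{0j}$.

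The case $j=0$ is immediate: only the $i=0$ summand survives (already because $\cE^\dagger$ kills $R_{p,q}$), and $\gamma_0\kappa_{0,0}^{p,q}=1$, so $c_0=1$. The substantive part is to prove $c_j=0$ for $1\le j\le p$. Writing $d=q-p\ge 0$ (so that $|q-p|=d$) and substituting the explicit values of $\gamma_i$ and of $\kappa_{i,j}^{p-j,q+j}$, then exposing the binomial coefficient via $\tfrac1{i!(j-i)!}=\tfrac1{j!}\binom{j}{i}$ and pulling out the factors independent of $i$, I expect the constant to collapse to
\begin{align*}
c_j=\frac{(d+1)!}{(d+j)!}\sum_{i=0}^{j}(-1)^i\binom{j}{i}\,\frac{(d+j+i)!}{(d+1+i)!}.
\end{align*}

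The key observation is that $f(i):=\dfrac{(d+j+i)!}{(d+1+i)!}=(d+2+i)(d+3+i)\cdots(d+j+i)$ is a polynomial in $i$ of degree exactly $j-1$. Consequently the alternating binomial sum $\sum_{i=0}^{j}(-1)^i\binom{j}{i}f(i)$ is, up to the sign $(-1)^j$, the $j$-th finite difference $\Delta^j f(0)$ of a polynomial of degree $j-1<j$, and hence vanishes. This gives $c_j=0$ for all $j\ge 1$ and completes the verification. I expect the only genuine obstacle to be recognising the sum as a finite difference: once the binomial coefficient is exposed and $f$ is identified as a polynomial of degree $j-1$, the vanishing is automatic and no hypergeometric or Pochhammer juggling is required.
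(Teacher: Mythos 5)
Your proof is correct and follows essentially the same route as the paper: apply the operator to the summands of the Fischer decomposition (\ref{fischer2}), use Lemma \ref{lem1}, equation (\ref{H}), and reduce everything to the scalar identity $\sum_{i=0}^{j}\gamma_i\,\kappa_{i,j}^{p-j,q+j}=\delta_{0j}$. You in fact go one step further than the paper, which merely asserts that ``one can easily verify'' the resulting triangular system is solved by the stated $\gamma_k$: your finite-difference argument --- recognising $f(i)=(d+2+i)\cdots(d+j+i)$ as a polynomial of degree $j-1$, so that $\sum_{i=0}^{j}(-1)^i\binom{j}{i}f(i)=(-1)^j\Delta^jf(0)=0$ --- supplies that omitted verification cleanly.
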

\begin{proof}
	Applying this projector on a homogeneous polynomial results in
	\begin{align*}
		Proj^{p,q}_{\cE^\dagger}\big(P_{p,q}\big)&=\sum_{k=0}^p\gamma_k\big(\cE\big)^k\big(\cE^\dagger\big)^k
		\sum_{j=0}^p\big(\cE)^jR_{p-j,q+j}\\
		&=\sum_{k=0}^p\sum_{j=k}^p\gamma_k\big(\cE\big)^k\big(\cE^\dagger\big)^k\big(\cE)^jR_{p-j,q+j},
	\end{align*}
	with $R_{p-j,q+j}\in\cR_{p-j,q+j}$ according to (\ref{fischer2}). In the second step all terms for $j<k$ vanish due to (\ref{H}). 
	Reordering this double sum gives
	\begin{align*}
		Proj^{p,q}_{\cE^\dagger}\big(P_{p,q}\big)=\sum_{j=0}^p\Big(\sum_{k=0}^j\gamma_k\kappa_{k,j}^{p-j,q+j}\Big)
		\big(\cE\big)^jR_{p-j,q+j}
	\end{align*}
	with $\kappa_{k,j}^{p-j,q+j}=\frac{j!}{(j-k)!}\frac{(q-p+j+k)!}{(q-p+j)!}$ as in Lemma \ref{lem1}. 
	In order for $Proj^{p,q}_{\cE^\dagger}$ to be a projection onto $\cR_{p,q}$ the sum in brackets has to vanish except for $j=0$. 
	We therefore get for the coefficients
	\begin{align*}
		\gamma_0&=1\\
		\sum_{k=0}^{j}\gamma_k\kappa_{k,j}^{p-j,q+j}&=0\hspace{10pt}\text{for }j=1,\ldots,p.
	\end{align*}
	One can easily verify that this linear system of equations is solved by
	\begin{align*}
		\gamma_k=\frac{(-1)^k}{k!}\frac{(\abs{q-p}+1)!}{(\abs{q-p}+1+k)!}.
	\end{align*}
\end{proof}
\begin{remark}
\label{symm}
	For homogeneous polynomials of order $(q,p)$ with still $p\leq q$ it immediately follows that
	\begin{align*}
		Proj^{q,p}_{\cE}=\overline{Proj^{p,q}_{\cE^\dagger}}=\sum_{j=0}^p\gamma_j\big(\cE^\dagger\big)^j\big(\cE\big)^j.
	\end{align*}
\end{remark}
\noindent Our next aim is to compute the projection $Proj^{p,q}_{\cE^\dagger}$ of the bihomogeneous reproducing kernel $Z_{p,q}=\frac{\scalar{z,\ubar}^p\scalar{\zbar,u}^q}{p!q!}$. For reasons of readability we introduce the notations $A=\scalar{z,\ubar}$ and $C=\scalar{z,u}_{\hspace{-1.5pt}s}$. We then get for the actions of $\cE$ and $\cE^\dagger$
\begin{align}
	\label{actionEA}\cE \bar{A}&=\bigg(\sum_{j=1}^nz_j\bar{\partial}_{z_{n+j}}-z_{n+j}
	\bar{\partial}_{z_j}\bigg)\sum_{k=1}^{2n}\zbar_ku_k=
	\sum_{j=1}^nz_ju_{n+j}-z_{n+j}u_j=C\\
	\label{actionEdC}\cE^\dagger C&=-\bigg(\sum_{j=1}^n\zbar_j{\partial}_{z_{n+j}}-\zbar_{n+j}{\partial}_{z_j}\bigg)
	\sum_{k=1}^nz_ku_{n+k}-z_{n+k}u_k=\sum_{j=1}^{2n}\zbar_ju_j=\bar{A},
\end{align}
from which follows that
\begin{align}
	\label{actionEdA}\cE^\dagger A&=-\big(\overline{\cE\bar{A}}\big)=-\bar{C}\\
	\label{actionEC}\cE\bar{C}&=-\big(\overline{\cE^\dagger C}\big)=-A.
\end{align}
\begin{lemma}
\label{projC}
	The action of the projector $Proj^{p,q}_{\cE^\dagger}$ on the kernel $\frac{\scalar{z,\ubar}^p\scalar{\zbar,u}^q}{p!q!}$ for $p\leq q$ is
	\begin{align*}
		Proj^{p,q}_{\cE^\dagger}\bigg(\frac{\scalar{z,\ubar}^p
		\scalar{\zbar,u}^q}{p!q!}\bigg)&=b_{p,q}\scalar{\zbar,u}^{q-p}
		\Big(\abs{\scalar{z,\ubar}}^2+\abs{\scalar{z,u}_{\hspace{-1.5pt}s}}^2\Big)^p\\
		&=b_{p,q}\scalar{\zbar,u}^{q-p}\abs{\scalar{\tilde{z},\tilde{u}^c}}^{2p},
	\end{align*}
	with $b_{p,q}=\frac{q-p+1}{p!(q+1)!}$ and the quaternionic vectors $\tilde{z},\tilde{u}\in\mH^n$ corresponding to the complex vectors $z,u\in\mC^{2n}$.
\end{lemma}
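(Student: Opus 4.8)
The plan is to exploit that both $\cE$ and $\cE^\dagger$ are first-order differential operators, hence derivations, and to track their action on the four ``atoms'' $A=\scalar{z,\ubar}$, $\bar A=\scalar{\zbar,u}$, $C=\scalar{z,u}_{\hspace{-1.5pt}s}$ and $\bar C$. The relations (\ref{actionEA})--(\ref{actionEC}) already record $\cE\bar A=C$, $\cE^\dagger C=\bar A$, $\cE^\dagger A=-\bar C$ and $\cE\bar C=-A$. For bidegree reasons one moreover has $\cE A=\cE C=0$ and $\cE^\dagger\bar A=\cE^\dagger\bar C=0$: since $A$ and $C$ carry bidegree $(1,0)$ while $\cE$ maps $\cP_{p,q}\to\cP_{p+1,q-1}$, and $\bar A,\bar C$ carry bidegree $(0,1)$ while $\cE^\dagger$ maps $\cP_{p,q}\to\cP_{p-1,q+1}$. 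Thus $A$ and $C$ are inert under $\cE$, while $\bar A$ and $\bar C$ are inert under $\cE^\dagger$.

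First I would compute $(\cE^\dagger)^j Z_{p,q}$. Because $\cE^\dagger\bar A=0$, the factor $\bar A^q$ passes through untouched, so it suffices to evaluate $(\cE^\dagger)^j(A^p)$; using $\cE^\dagger A=-\bar C$ together with $\cE^\dagger\bar C=0$ (so that $\cE^\dagger$ acts as $-\bar C\,\diff{A}$), an immediate induction gives
\begin{align*}
(\cE^\dagger)^j(A^p\bar A^q)=(-1)^j\frac{p!}{(p-j)!}\,A^{p-j}\bar A^q\bar C^j,\qquad j\le p.
\end{align*}
Next I would apply $\cE^j$. Since $\cE A=0$ the surviving factor $A^{p-j}$ is inert, and on $\bar A^q\bar C^j$ the derivation $\cE$ acts as $C\,\diff{\bar A}-A\,\diff{\bar C}$, turning each $\bar A$ into $C$ and each $\bar C$ into $-A$, with every factor hit at most once. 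A binomial (Leibniz) expansion then yields
\begin{align*}
\cE^j(\bar A^q\bar C^j)=j!\,\bar A^{q-j}\sum_{a=0}^{j}\binom{q}{a}\binom{j}{a}(-1)^{j-a}(A\bar A)^{j-a}(C\bar C)^a,
\end{align*}
where $A\bar A=\abs{\scalar{z,\ubar}}^2$ and $C\bar C=\abs{\scalar{z,u}_{\hspace{-1.5pt}s}}^2$ recombine into the quaternionic modulus.

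Assembling these pieces, $\cE^j(\cE^\dagger)^j Z_{p,q}$ becomes an explicit sum of the monomials $A^{p-a}\bar A^{q-a}(C\bar C)^a$. I would then weight by $\gamma_j$, interchange the order of the summations over $j$ and $a$, and compare the coefficient of each $A^{p-a}\bar A^{q-a}(C\bar C)^a$ with the corresponding term in the binomial expansion $b_{p,q}\bar A^{q-p}(A\bar A+C\bar C)^p=b_{p,q}\sum_{a}\binom{p}{a}A^{p-a}\bar A^{q-a}(C\bar C)^a$. Matching the two reduces the whole statement, for each fixed $a$, to the single finite sum
\begin{align*}
\sum_{j=a}^{p}\frac{(-1)^j}{(p-j)!\,(q-p+1+j)!}\binom{j}{a}=\frac{(-1)^a(q-a)!}{(p-a)!\,(q+1)!\,(q-p)!}.
\end{align*}

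The main obstacle is this identity, but it is routine: after factoring out the $j=a$ term it is a terminating hypergeometric series ${}_2F_1\big(a+1,-(p-a);\,q-p+2+a;\,1\big)$, which the Chu--Vandermonde identity evaluates to $(q-p+1)_{p-a}/(q-p+2+a)_{p-a}$. Rewriting the resulting Pochhammer symbols as ratios of factorials reproduces exactly $b_{p,q}\binom{p}{a}$ with $b_{p,q}=\frac{q-p+1}{p!(q+1)!}$, proving the claim for every $a$ and hence the lemma. As a sanity check one may verify the coefficient identity directly in the first few cases (e.g.\ $p=q=1$ and $p=2,\,q=3$).
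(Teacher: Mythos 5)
Your proposal is correct and takes essentially the same route as the paper's proof: both first compute $(\cE^\dagger)^j Z_{p,q}$ via $\cE^\dagger A=-\bar{C}$, then expand $\cE^j$ through the same Leibniz formula $\sum_k\binom{j}{k}(-1)^{j-k}C^kA^{j-k}\diff{\bar{A}}^k\diff{\bar{C}}^{j-k}$, interchange the double sum, and evaluate the inner terminating ${}_2F_1$ at $1$ by Gauss/Chu--Vandermonde. The only cosmetic difference is that you match coefficients of each monomial $A^{p-a}\bar{A}^{q-a}(C\bar{C})^a$ against the binomial expansion of the claimed answer, whereas the paper assembles the same sum into $\binom{p}{k}$ coefficients and resums it as $\big(1+C\bar{C}/(A\bar{A})\big)^p$.
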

\begin{proof}
	Acting with the projector $Proj^{p,q}_{\cE^\dagger}$ on $Z_{p,q}=\frac{A^p\bar{A}^q}{p!q!}$ gives
	\begin{align*}
		Proj^{p,q}_{\cE^\dagger}\big(Z_{p,q}\big)&=\sum_{j=0}^p\gamma_j\big(\cE\big)^j\big(\cE^\dagger\big)^j
		\frac{A^p\bar{A}^q}{p!q!}\\
		&=\sum_{j=0}^p\gamma_j\big(\cE\big)^j\frac{p!(-1)^j\bar{C}^j}{(p-j)!}
		\frac{A^{p-j}\bar{A}^q}{p!q!},
	\end{align*}
	where equation $(\ref{actionEdA})$ was used $j$ times as well as the fact that $\cE^\dagger \bar{A}=\cE^\dagger \bar{C}=0$. Because also $\cE A=0$ and
	by using equations $(\ref{actionEA})$ and $(\ref{actionEC})$ we further get
	\begin{align*}
		Proj^{p,q}_{\cE^\dagger}\big(Z_{p,q}\big)&=\sum_{j=0}^p\frac{(-1)^j\gamma_j}{(p-j)!q!}A^{p-j}
		\big(\cE\big)^{j-1}\cE\big(\bar{C}^j\bar{A}^q\big)\\
		&=\sum_{j=0}^p\frac{(-1)^j\gamma_j}{(p-j)!q!}A^{p-j}
		\big(\cE\big)^{j-1}\big(qC\bar{A}^{q-1}-jA\bar{C}^{j-1}\big).
	\end{align*}
	The operator $\cE$ acts formally on $\bar{C}^j\bar{A}^q$ as $C\diff{\bar{A}}-A\diff{\bar{C}}$ and $\big(\cE\big)^j$
	thus as $\sum_{k=0}^j{{j}\choose{k}}(-1)^{j-k}C^kA^{j-k}
	\diff{\bar{A}}^k \diff{\bar{C}}^{j-k}$. Hence we get
	\begin{align*}
		Proj^{p,q}_{\cE^\dagger}\big(Z_{p,q}\big)&=\sum_{j=0}^p\frac{(-1)^j\gamma_j}{(p-j)!q!}A^{p-j}
		\sum_{k=0}^j{{j}\choose{k}}(-1)^{j-k}C^kA^{j-k}
		\diff{\bar{A}}^k\diff{\bar{C}}^{j-k}\big(\bar{C}^j\bar{A}^q\big)\\
		&=\sum_{j=0}^p\frac{(-1)^j\gamma_j}{(p-j)!q!}A^{p-j}
		\sum_{k=0}^j{{j}\choose{k}}(-1)^{j-k}C^kA^{j-k}
		\frac{j!q!}{k!(q-k)!}\bar{C}^k\bar{A}^{q-k}.
	\end{align*}
	Using the definition of $\gamma_j$ from Lemma \ref{projSymp}, changing the order of summation and rearranging terms results in
	\begin{align*}
		Proj^{p,q}_{\cE^\dagger}\big(Z_{p,q}\big)&=\frac{A^p\bar{A}^q}{p!q!}\sum_{k=0}^p
		\Big(\frac{C\bar{C}}{A\bar{A}}\Big)^k{q\choose k}\frac{p!(q-p+1)!}{(q+1)!}
		\sum_{j=k}^p(-1)^{j-k}{q+1\choose p-j}{j\choose k}\\
		&=\frac{A^p\bar{A}^q}{p!q!}
		\sum_{k=0}^p\Big(\frac{C\bar{C}}{A\bar{A}}\Big)^k{q\choose k}
		\frac{p!(q-p+1)!}{(q+1)!}{q-k\choose p-k},
	\end{align*}
	where the sum over $j$ is the hypergeometric function $_2F_1(k-p,p-q;1;z)$ evaluated in $z=1$. The desired result then follows as
	\begin{align*}
		Proj^{p,q}_{\cE^\dagger}\big(Z_{p,q}\big)&=\frac{q-p+1}{p!(q+1)!}A^p\bar{A}^q
		\sum_{k=0}^p\Big(\frac{C\bar{C}}{A\bar{A}}\Big)^k{p\choose k}\\
		&=\frac{q-p+1}{p!(q+1)!}A^p\bar{A}^q\Big(1+\frac{C\bar{C}}{A\bar{A}}\Big)^p
		=\frac{q-p+1}{p!(q+1)!}\bar{A}^{q-p}\Big(A\bar{A}+C\bar{C}\Big)^p.
	\end{align*}
\end{proof}
\noindent Given the explicit form of $Z_{p,q}^{S}$, we now show that it acts as a reproducing kernel on the space of symplectic polynomials $\cR_{p,q}$ with
respect to the Fischer inner product.
\begin{theorem}
\label{reproR}
	For a symplectic polynomial $R_{r,s}\in\cR_{r,s}$ it holds that
	\begin{align*}
		\bigscalar{Z_{p,q}^{S}(z,u),\cE^kR_{p-k,q+k}(z)}_\partial=\delta_{k,0}\cE^kR_{p-k,q+k}(u)
	\end{align*}
	with the reproducing kernel
	\begin{align*}
		Z_{p,q}^{S}=b_{p,q}\scalar{\zbar,u}^{q-p}\abs{\scalar{\tilde{z},\tilde{u}^c}}^{2p},
	\end{align*}
	where $b_{p,q}=\frac{q-p+1}{p!(q+1)!}$ and $p\leq q$.
\end{theorem}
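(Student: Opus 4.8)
The plan is to leverage the two facts already in place: that $Z_{p,q}(z,u)=\frac{\scalar{z,\ubar}^p\scalar{\zbar,u}^q}{p!q!}$ is the reproducing kernel of $\cP_{p,q}$ for the complex Fischer inner product, and that by Lemma \ref{projC} the symplectic kernel is precisely the projection $Z_{p,q}^{S}=Proj^{p,q}_{\cE^\dagger}\big(Z_{p,q}\big)$, where $Proj^{p,q}_{\cE^\dagger}$ acts in the variable $z$ (with $u$ a passive parameter). Since the Fischer decomposition (\ref{fischer2}) writes any element of $\cP_{p,q}$ as $\sum_{k=0}^{p}\cE^kR_{p-k,q+k}$ with $R_{p-k,q+k}\in\cR_{p-k,q+k}$, it suffices to prove the stated identity for a single summand $P_{p,q}=\cE^kR_{p-k,q+k}$.

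First I would establish that $Proj^{p,q}_{\cE^\dagger}$ is self-adjoint with respect to $\scalar{\cdot,\cdot}_\partial$. The operator $\cE^\dagger$ is exactly the Fischer adjoint of $\cE$: from the duality $z_j\leftrightarrow\diff{z_j}$, $\zbar_j\leftrightarrow\diffc{z_j}$ built into the inner product (\ref{fischer}) one checks directly that $\bigscalar{\cE P,Q}_\partial=\bigscalar{P,\cE^\dagger Q}_\partial$ (this is implicit in the notation of \cite{symp}). Consequently $\big(\cE^j(\cE^\dagger)^j\big)^\ast=\cE^j(\cE^\dagger)^j$, and since the coefficients $\gamma_j$ of $Proj^{p,q}_{\cE^\dagger}$ are real, the whole projector is self-adjoint.

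With this in hand the computation is short:
\begin{align*}
	\bigscalar{Z_{p,q}^{S}(z,u),\cE^kR_{p-k,q+k}(z)}_\partial
	&=\bigscalar{Proj^{p,q}_{\cE^\dagger}\big(Z_{p,q}(z,u)\big),\cE^kR_{p-k,q+k}(z)}_\partial\\
	&=\bigscalar{Z_{p,q}(z,u),Proj^{p,q}_{\cE^\dagger}\big(\cE^kR_{p-k,q+k}(z)\big)}_\partial\\
	&=\Big[Proj^{p,q}_{\cE^\dagger}\big(\cE^kR_{p-k,q+k}\big)\Big](u)
	=\delta_{k,0}\,\cE^kR_{p-k,q+k}(u),
\end{align*}
where the second line uses self-adjointness, the third uses the reproducing property of $Z_{p,q}$ on $\cP_{p,q}$, and the last uses the defining property of the projector from Lemma \ref{projSymp}.

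The main obstacle is the self-adjointness step: correctly pinning down that $\cE^\dagger$ really is the Fischer adjoint of $\cE$ (and not, say, its adjoint up to a sign or conjugation), and being careful that all operators and the inner product are taken in the $z$-variable. Once $\bigscalar{\cE\,\cdot,\cdot}_\partial=\bigscalar{\cdot,\cE^\dagger\,\cdot}_\partial$ is secured, the reality of the $\gamma_j$ ensures the conjugate-linearity of $\scalar{\cdot,\cdot}_\partial$ in its first slot does not disturb the argument, and the remainder is a purely formal manipulation reducing the claim to Lemma \ref{projSymp}.
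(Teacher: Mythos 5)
Your proposal is correct and follows essentially the same route as the paper's proof: write $Z_{p,q}^{S}=Proj^{p,q}_{\cE^\dagger}\big(Z_{p,q}\big)$ via Lemma \ref{projC}, move the self-adjoint projector onto $\cE^kR_{p-k,q+k}$, invoke Lemma \ref{projSymp}, and finish with the reproducing property of $Z_{p,q}$. In fact you go slightly further than the paper, which merely asserts the self-adjointness of $Proj^{p,q}_{\cE^\dagger}$, whereas you correctly ground it in the Fischer duality $\bigscalar{\cE P,Q}_\partial=\bigscalar{P,\cE^\dagger Q}_\partial$ together with the reality of the coefficients $\gamma_j$.
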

\begin{proof}
	We consider the inner product of $Z_{p,q}^{S}$ with a homogeneous polynomial $\cE^kR_{p-k,q+k}\in\cP_{p,q}$
	\begin{align*}
		\bigscalar{Z_{p,q}^{S},\cE^kR_{p-k,q+k}}_\partial&
		=\bigscalar{Proj^{p,q}_{\cE^\dagger}Z_{p,q},\cE^kR_{p-k,q+k}}_\partial,
	\end{align*}
	where we used Lemma \ref{projC}. As the projection operator $Proj^{p,q}_{\cE^\dagger}$ is self-adjoint we get
	\begin{align*}
		\bigscalar{Z_{p,q}^{S},\cE^kR_{p-k,q+k}}_\partial&
		=\bigscalar{Z_{p,q},Proj^{p,q}_{\cE^\dagger}\big(\cE^kR_{p-k,q+k}\big)}_\partial\\
		&=\bigscalar{Z_{p,q},\delta_{k,0}\cE^kR_{p-k,q+k}}_\partial,
	\end{align*}
	where the last equation holds because of the construction of $Proj^{p,q}_{\cE^\dagger}$ in Lemma \ref{projSymp}. As $\cE^kR_{p-k,q+k}$ is a homogeneous
	polynomial, it gets reproduced by the kernel $Z_{p,q}$, hence
	\begin{align*}
		\bigscalar{Z_{p,q}^{S},\cE^kR_{p-k,q+k}}_\partial&=\delta_{k,0}\cE^kR_{p-k,q+k}(u).
	\end{align*}
\end{proof}
\noindent In \cite{symp} the authors considered symplectic polynomials that are in the kernel of the Laplacian, rather than general symplectic polynomials.
\begin{definition}
	The space of symplectic harmonics $\cH_{p,q}^S$ consists of harmonic symplectic polynomials, thus $H_{p,q}\in\cH_{p,q}^S$ if
	\begin{align*}
		\Delta H_{p,q}&=0,\\
		H_{p,q}&\in\cR_{p,q}.
	\end{align*}
\end{definition}
\noindent The main result of this section is to construct a reproducing kernel on $\cH_{p,q}^S$ and describe it in terms of special functions. To do so we employ once more the idea of projecting the homogeneous kernel $Z_{p,q}$, this time onto $\cH_{p,q}^S$. This projection can be done by simply applying
the harmonic operator $Proj^{p,q}_0$ and the symplectic operator $Proj^{p,q}_{\cE^\dagger}$ consecutively. The operators $\cE$ and $\cE^\dagger$ commute
with both $\norm{z}^2$ and $\Delta_z$, and therefore $Proj^{p,q}_0$ and $Proj^{p,q}_{\cE^\dagger}$ also commute, as illustrated in the following diagram.
\begin{center}
\begin{tikzpicture}[thick, main node/.style={}]

\begin{scope}[]

\node[main node, minimum size = 1cm] (A) {$\cP_{p,q}$};
\node[main node, minimum size = 1cm, below=2cm of A] (C) {$\cR_{p,q}$};
\node[main node, minimum size = 1cm, right=3cm of A] (B) {$\cH_{p,q}$};
\node[main node, minimum size = 1cm, right=3cm of C] (D) {$\cH_{p,q}^S$};
\path[every node/.style={font=\sffamily\small}]
    (A) edge[->] node[left] {$Proj^{p,q}_{\cE^\dagger}$} (C)
    (A) edge[->] node[above] {$Proj^{p,q}_{0}$} (B)
	(C) edge[->] node[below] {$Proj^{p,q}_{0}$} (D) 
	(B) edge[->] node[right] {$Proj^{p,q}_{\cE^\dagger}$} (D)
      ;
\end{scope}
\end{tikzpicture}
\end{center}
For the same reason of commutativity both the spaces of spherical harmonics and symplectic polynomials can be decomposed into symplectic harmonics. The first of these two Fischer decompositions (for $p\leq q$)
\begin{align*}
	\cH_{p,q}&=\bigoplus_{j=0}^{p}(\cE)^j\cH_{p-j,q+j}^S\hspace{1cm}\cR_{p,q}=\bigoplus_{j=0}^{p}\norm{z}^{2j}\cH_{p-j,q-j}^S
\end{align*}
is already given in \cite{symp} together with a full decomposition of the space of bihomogeneous polynomials, i.e.
\begin{align*}
	\cP_{p,q}&=\bigoplus_{j=0}^{p}\bigoplus_{k=0}^{p-j}\norm{z}^{2j}(\cE)^k\cH_{p-j-k,q-j+k}^S.
\end{align*}
We will proceed in the same way as for the symplectic kernel by first computing the projection explicitly and showing afterwards that the result is indeed the reproducing kernel. The two possible calculations
\begin{align}
	\label{option1}&Proj^{p,q}_{\cE^\dagger}\Big(Proj^{p,q}_{0}\big(Z_{p,q}\big)\Big)=Proj^{p,q}_{\cE^\dagger}\Big(K_{p,q}\Big)\\
	\label{option2}&Proj^{p,q}_{0}\Big(Proj^{p,q}_{\cE^\dagger}\big(Z_{p,q}\big)\Big)=Proj^{p,q}_{0}\Big(Z_{p,q}^{S}\Big)
\end{align}
yield the same result. Applying the symplectic projection on the harmonic kernel according to (\ref{option1}), however, is a tedious computation. Therefore we proceed by using formula (\ref{option2}). In order to do so, we need the action of the Laplacian on the symplectic kernel $Z_{p,q}^{S}$ which is determined in the following lemma.
\begin{lemma}
\label{actionLap}
	The action of the Laplacian on the kernel $Z_{p,q}^{S}$ is given by
	\begin{align*}
		\Delta_z Z_{p,q}^{S} = \norm{u}^2Z_{p-1,q-1}^{S}.
	\end{align*}
\end{lemma}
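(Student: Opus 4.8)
The plan is to reduce everything to the elementary building blocks $A=\scalar{z,\ubar}$, $\bar A=\scalar{\zbar,u}$, $C=\scalar{z,u}_s$ and $\bar C$, in terms of which Lemma \ref{projC} already writes $Z_{p,q}^{S}=b_{p,q}\,\bar A^{\,q-p}F^{p}$ with $F:=\abs{\scalar{\tilde z,\tilde u^c}}^2=A\bar A+C\bar C$. Since here $\Delta_z=\sum_{j=1}^{2n}\diffc{z_j}\diff{z_j}$, the first task is to record the first-order derivatives of these blocks. From the definitions one reads off $\diff{z_j}A=\ubar_j$ and $\diffc{z_j}\bar A=u_j$, while $\diff{z_j}\bar A=\diffc{z_j}A=0$; similarly $\diff{z_j}C$ and $\diffc{z_j}\bar C$ are the (anti-symmetric) coefficient vectors of the symplectic form, while $\diff{z_j}\bar C=\diffc{z_j}C=0$. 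In particular $A,C$ are holomorphic and $\bar A,\bar C$ anti-holomorphic, so when the product rule is applied exactly one of $\diff{z_j}$, $\diffc{z_j}$ hits each factor.

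Next I would apply $\diff{z_j}$ and then $\diffc{z_j}$ to $\bar A^{\,q-p}F^{p}$ and sum over $j$. Using $\diff{z_j}F=\bar A\,\diff{z_j}A+\bar C\,\diff{z_j}C$ and $\diffc{z_j}F=A\,\diffc{z_j}\bar A+C\,\diffc{z_j}\bar C$, the second derivative expands into a handful of terms, each built from powers of $\bar A$, $F$, $A$, $C$, $\bar C$ multiplied by one of the four contraction sums $\sum_j(\diff{z_j}A)(\diffc{z_j}\bar A)$, $\sum_j(\diff{z_j}C)(\diffc{z_j}\bar C)$, $\sum_j(\diff{z_j}A)(\diffc{z_j}\bar C)$ and $\sum_j(\diff{z_j}C)(\diffc{z_j}\bar A)$.

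The crux of the computation is the evaluation of these four sums. The two \emph{diagonal} ones both collapse to $\sum_j\abs{u_j}^2=\norm{u}^2$, whereas the two \emph{cross} sums vanish identically thanks to the anti-symmetry of $\scalar{z,u}_s$ — this is precisely where the symplectic structure does the work. I expect this bookkeeping to be the main obstacle: there are several product-rule contributions, and one must keep track of which of them carries a vanishing cross contraction and which a surviving factor $\norm{u}^2$, so that the survivors assemble correctly.

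Finally, after substituting these values, every surviving term shares a common factor $p\,\norm{u}^2\,\bar A^{\,q-p}F^{p-2}$, and the remaining bracket simplifies, using $A\bar A+C\bar C=F$, to $(q+1)F$. This yields $\Delta_z Z_{p,q}^{S}=b_{p,q}\,p(q+1)\,\norm{u}^2\,\bar A^{\,q-p}F^{p-1}$. A direct check of the constants gives $b_{p,q}\,p(q+1)=\tfrac{q-p+1}{(p-1)!\,q!}=b_{p-1,q-1}$, and since $\bar A^{\,q-p}F^{p-1}$ is exactly $b_{p-1,q-1}^{-1}Z_{p-1,q-1}^{S}$, the right-hand side equals $\norm{u}^2 Z_{p-1,q-1}^{S}$, as claimed.
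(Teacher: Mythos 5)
Your proof is correct, and it takes a genuinely different route from the paper. The paper never differentiates the closed form at all: it observes that $\Delta_z$ commutes with $\cE$ and $\cE^\dagger$, hence with $Proj^{p,q}_{\cE^\dagger}$, writes $\Delta_z Z_{p,q}^{S}=Proj^{p,q}_{\cE^\dagger}\big(\Delta_z Z_{p,q}\big)$, uses the elementary identity $\Delta_z Z_{p,q}=\norm{u}^2 Z_{p-1,q-1}$ for the bihomogeneous kernel, and finishes with the observation that $Proj^{p,q}_{\cE^\dagger}$ and $Proj^{p-1,q-1}_{\cE^\dagger}$ agree on $Z_{p-1,q-1}$ (the coefficients $\gamma_j$ depend only on $\abs{q-p}$, which is unchanged, and the extra $j=p$ term annihilates anything of bidegree $(p-1,q-1)$). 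You instead differentiate the explicit formula $Z_{p,q}^{S}=b_{p,q}\bar{A}^{q-p}F^{p}$ from Lemma \ref{projC} directly, and your bookkeeping checks out: the diagonal contractions $\sum_j(\diff{z_j}A)(\diffc{z_j}\bar{A})$ and $\sum_j(\diff{z_j}C)(\diffc{z_j}\bar{C})$ both equal $\norm{u}^2$, the cross contractions vanish by the antisymmetry of $\scalar{z,u}_{\hspace{-1.5pt}s}$, and the three product-rule contributions carry the factors $(q-p)F$, $(p-1)F$ and $2F$, summing to $(q+1)F$, so that $\Delta_z Z_{p,q}^{S}=p(q+1)b_{p,q}\norm{u}^2\bar{A}^{q-p}F^{p-1}$ with $p(q+1)b_{p,q}=\frac{q-p+1}{(p-1)!\,q!}=b_{p-1,q-1}$, exactly as you state. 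What each approach buys: the paper's argument is shorter, computation-free, and exposes the structural reason the lemma holds (commutativity plus the trivial action of $\Delta_z$ on $Z_{p,q}$); yours is self-contained given the closed form, makes visible where the symplectic structure enters (the vanishing cross contractions, which force the quaternionic modulus $F=A\bar{A}+C\bar{C}$ to survive intact), and doubles as an independent consistency check on the constant $b_{p,q}$ of Lemma \ref{projC} --- if that constant were wrong, your recursion $b_{p,q}\,p(q+1)=b_{p-1,q-1}$ would fail. One marginal remark: for $p=0$ your overall factor $p$ makes the left-hand side vanish, consistent with the convention $Z_{-1,q-1}^{S}=0$, so the edge case is covered.
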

\begin{proof}
	The Laplacian commutes with $\cE$, $\cE^\dagger$ and hence with $Proj^{p,q}_{\cE^\dagger}$. Therefore we have
	\begin{align*}
		\Delta Z_{p,q}^{S}&=\Delta\Big(Proj^{p,q}_{\cE^\dagger}Z_{p,q}\Big)\\
		&=Proj^{p,q}_{\cE^\dagger}\Big(\Delta\frac{\scalar{z,\bar{u}}^p\scalar{\zbar,u}^q}{p!q!}\Big)\\
		&=Proj^{p,q}_{\cE^\dagger}\Big(\norm{u}^2\frac{\scalar{z,\bar{u}}^{p-1}\scalar{\zbar,u}^{q-1}}{(p-1)!(q-1)!}\Big).
	\end{align*}
	As the projection $Proj^{p,q}_{\cE^\dagger}$ is independent of $u$ we get
	\begin{align*}
		\Delta Z_{p,q}^{S}&=\norm{u}^2 Proj^{p,q}_{\cE^\dagger}\Big(Z_{p-1,q-1}\Big)\\
		&=\norm{u}^2Z_{p-1,q-1}^{S}.
	\end{align*}
	Note that in the last step we used that $Proj^{p,q}_{\cE^\dagger}\Big(Z_{p-1,q-1}\Big)=Proj^{p-1,q-1}_{\cE^\dagger}\Big(Z_{p-1,q-1}\Big)$ which follows
	from the construction of $Proj^{p,q}_{\cE^\dagger}$ in Lemma \ref{projSymp}.
\end{proof}
\begin{theorem}
\label{projSH}
	Projecting $Z_{p,q}^{S}$ on $\cH_{p,q}^{S}$ results in the polynomial
	\begin{align*}
		Proj^{p,q}_0\Big(Z_{p,q}^{S}\Big)=c_{p,q}\scalar{z,\ubar}^{q-p}\norm{z}^{2p}\norm{u}^{2p}
		P_p^{2n-3,q-p+1}\big(2t-1\big),
	\end{align*}
	with $c_{p,q}=\frac{(q-p+1)(q+2n-2)!}{(p+q+2n-2)!(q+1)!}$, the angular variable 
	$t=\frac{\abs{\scalar{\tilde{z},\tilde{u}^c}}^2}{\norm{z}^2\norm{u}^2}$ and the Jacobi polynomial 
	$P_p^{\alpha,\beta}(x)$.
\end{theorem}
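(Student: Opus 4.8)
The plan is to compute $Proj^{p,q}_0\big(Z_{p,q}^S\big)$ directly, i.e. to follow route \eqref{option2} rather than \eqref{option1}. Since a symplectic harmonic of bi-degree $(p,q)$ is in particular a complex harmonic of that bi-degree, I would take the harmonic projection operator of Theorem \ref{projkernelC}, adapted to the ambient space $\mC^{2n}$ by replacing $n$ with $2n$ in the constants. For $\ell=0$ and $p\leq q$ this gives
\[
Proj^{p,q}_0=\sum_{j=0}^{p}\beta_j\norm{z}^{2j}\Delta_z^{j},\qquad \beta_j=\frac{(-1)^j(2n-1+p+q)}{j!}\frac{(2n-2+p+q-j)!}{(2n-1+p+q)!}.
\]

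First I would iterate Lemma \ref{actionLap}. Since $\norm{u}^2$ is independent of $z$, repeated application yields $\Delta_z^{j}Z_{p,q}^S=\norm{u}^{2j}Z_{p-j,q-j}^S$ for $0\leq j\leq p$, each intermediate degree still satisfying $p-j\leq q-j$ so that the lemma applies. Substituting the explicit form $Z_{p-j,q-j}^S=b_{p-j,q-j}\scalar{\zbar,u}^{q-p}\abs{\scalar{\tilde z,\tilde u^c}}^{2(p-j)}$ from Theorem \ref{reproR}, with $b_{p-j,q-j}=\frac{q-p+1}{(p-j)!(q-j+1)!}$, and using $\abs{\scalar{\tilde z,\tilde u^c}}^{2(p-j)}=t^{\,p-j}\norm{z}^{2(p-j)}\norm{u}^{2(p-j)}$, the powers of $\norm{z}$ and $\norm{u}$ collapse to the common factor $\norm{z}^{2p}\norm{u}^{2p}$, while the angular prefactor $\scalar{\zbar,u}^{q-p}$ carried by $Z_{p,q}^S$ is left untouched. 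This leaves
\[
Proj^{p,q}_0\big(Z_{p,q}^S\big)=\scalar{\zbar,u}^{q-p}\norm{z}^{2p}\norm{u}^{2p}(q-p+1)\sum_{j=0}^{p}\frac{\beta_j}{(p-j)!(q-j+1)!}\,t^{\,p-j},
\]
with the angular variable $t=\frac{\abs{\scalar{\tilde z,\tilde u^c}}^2}{\norm{z}^2\norm{u}^2}$.

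The remaining work is to identify the finite sum as a Jacobi polynomial. I would reverse the index by setting $k=p-j$ and rewrite the factorials through Pochhammer symbols, using $\frac{(-1)^k}{(p-k)!}=\frac{(-p)_k}{p!}$, $\frac{(2n-2+q+k)!}{(2n-2+q)!}=(2n-1+q)_k$, and $\frac{1}{(q-p+k+1)!}=\frac{1}{(q-p+1)!\,(q-p+2)_k}$. After collecting constants the sum becomes a multiple of the Gauss series ${}_2F_1\big(-p,\,q+2n-1;\,q-p+2;\,t\big)$. I would then match this with the Jacobi polynomial via its hypergeometric representation: because the natural representation has argument $1-t$ whereas the sum is in powers of $t$, I would apply the reflection symmetry $P_p^{\a,\b}(-x)=(-1)^pP_p^{\b,\a}(x)$ to get $P_p^{2n-3,q-p+1}(2t-1)=(-1)^p\frac{(q-p+2)_p}{p!}\,{}_2F_1(-p,q+2n-1;q-p+2;t)$, which is exactly the series obtained. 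Gathering all constant factors and using $(q-p)!\,(q-p+2)_p=\frac{(q+1)!}{q-p+1}$ then reproduces $c_{p,q}=\frac{(q-p+1)(q+2n-2)!}{(p+q+2n-2)!(q+1)!}$.

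The main obstacle is this final identification: one must keep careful track of the factorials and Pochhammer symbols through the reindexing and, above all, choose the hypergeometric form of the Jacobi polynomial whose argument is $t$ rather than $1-t$. The reflection symmetry is the device that forces the parameters $(2n-3,\,q-p+1)$ and the shifted argument $2t-1$ to appear. Everything else reduces to routine, if lengthy, bookkeeping, together with the (already noted) adaptation of the complex-harmonic constants to the dimension $4n$.
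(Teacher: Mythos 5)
Your proposal is correct and takes essentially the same route as the paper: project via (\ref{option2}), iterate Lemma \ref{actionLap} to get $\Delta_z^jZ_{p,q}^S=\norm{u}^{2j}Z_{p-j,q-j}^S$, substitute the explicit symplectic kernel from Lemma \ref{projC}, reverse the summation, and identify the Jacobi polynomial using the reflection $(-1)^pP_p^{\a,\b}(x)=P_p^{\b,\a}(-x)$, exactly as the paper does. Your only differences are cosmetic — you spell out the ${}_2F_1$ bookkeeping that the paper leaves implicit, and you (consistently with the paper's own proof) retain the prefactor $\scalar{\zbar,u}^{q-p}$ where the theorem statement prints $\scalar{z,\ubar}^{q-p}$.
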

\begin{proof}
	Applying the harmonic projection $Proj^{p,q}_{0}$ on the symplectic kernel $Z_{p,q}^{S}$ results in
	\begin{align*}
		Proj^{p,q}_{0}\Big(Z_{p,q}^{S}\Big)&=\sum_{j=0}^p\beta_j\norm{z}^{2j}\Delta^jZ_{p,q}^{S}\\
		&=\sum_{j=0}^p\beta_j\norm{z}^{2j}\norm{u}^{2j}Z_{p-j,q-j}^{S},
	\end{align*}
	where Lemma \ref{actionLap} was used. With Lemma \ref{projC} and putting 
	$t=\frac{\abs{\scalar{\tilde{z},\tilde{u}^c}}^2}{\norm{z}^2\norm{u}^2}$ we get
	\begin{align*}
		Proj^{p,q}_{0}\Big(Z_{p,q}^{S}\Big)&=\sum_{j=0}^p\beta_j\norm{z}^{2j}\norm{u}^{2j}\frac{q-p+1}{(p-j)!(q-j+1)!}
		\scalar{\zbar,u}^{q-p}\abs{\scalar{\tilde{z},\tilde{u}^c}}^{2(p-j)}\\
		&=\norm{z}^{2p}\norm{u}^{2p}\bar{A}^{q-p}\sum_{j=0}^p\beta_j\frac{q-p+1}{(p-j)!(q-j+1)!}t^{p-j}.
	\end{align*}
	Reversing the summation order and using the definition of $\beta_j$, given in Theorem \ref{projkernelC}, results in
	\begin{align*}
		Proj^{p,q}_{0}\Big(Z_{p,q}^{S}\Big)&=\norm{z}^{2p}\norm{u}^{2p}\bar{A}^{q-p}
		\sum_{j=0}^p\beta_{p-j}\frac{q-p+1}{j!(q-p+j+1)!}t^{j}\\
		&=\norm{z}^{2p}\norm{u}^{2p}\bar{A}^{q-p}\sum_{j=0}^p
		\frac{(-1)^{p-j}(q+2n-2+j)!(q-p+1)}{(p-j)!(p+q+2n-2)!j!(q-p+j+1)!}t^j.
	\end{align*}
	By pulling out the factors that are independent of $j$, the remaining sum can (up to a constant) be recognized as a Jacobi polynomial in $t$, hence
	\begin{align*}
		Proj^{p,q}_{0}\Big(Z_{p,q}^{S}\Big)&=(-1)^p\frac{(q-p+1)}{p!(p+q+2n-2)!}\norm{z}^{2p}\norm{u}^{2p}\bar{A}^{q-p}
		\sum_{j=0}^p\frac{(q+2n-2+j)!}{(q-p+j+1)!}{p\choose j}(-t)^j\\
		&=(-1)^p\frac{(q-p+1)(q+2n-2)!}{(p+q+2n-2)!(q+1)!}\norm{z}^{2p}\norm{u}^{2p}
		\bar{A}^{q-p}P_p^{q-p+1,2n-3}\big(1-2t\big).
	\end{align*}
	The claim then follows by the well-known property of Jacobi polynomials that $(-1)^pP^{\alpha,\beta}_p(x)=P^{\beta,\alpha}_p(-x)$.
\end{proof}
The harmonic projection we computed in the previous lemma is the (unique) reproducing kernel on the space of symplectic harmonics $\cH_{p,q}^S$. We prove this statement in the following theorem.
\begin{theorem}
	For a symplectic harmonic $H_{r,s}^S\in\cH_{r,s}^S$ it holds that
	\begin{align*}
		\bigscalar{K_{p,q}^{S}(z,u),\norm{z}^{2j}\cE^kH_{p-j-k,q-j+k}^S(z)}_\mS=\delta_{j,0}\delta_{k,0}\norm{z}^{2j}\cE^kH_{p-j-k,q-j+k}^S(u)
	\end{align*}
	with the reproducing kernel
	\begin{align*}
		K_{p,q}^{S}=d_{p,q}\scalar{z,\ubar}^{q-p}\norm{z}^{2p}\norm{u}^{2p}P_p^{2n-3,q-p+1}\big(2t-1\big),
	\end{align*}
	where 
	$d_{p,q}=\frac{(q-p+1)(p+q+2n-1)(q+2n-2)!}{(2n-1)!(q+1)!}$, $p\leq q$, $t=\frac{\abs{\scalar{\tilde{z},\tilde{u}^c}}^2}{\norm{z}^2\norm{u}^2}$
	and the Jacobi polynomial $P_p^{\alpha,\beta}(x)$.
\end{theorem}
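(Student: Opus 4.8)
\section*{Proof proposal}

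The plan is to reduce the claim to the two reproducing properties already established --- the harmonic reproducing mechanism encoded in $Proj^{p,q}_0$ and the symplectic reproducing property of $Z_{p,q}^S$ from Theorem \ref{reproR} --- after trading the spherical $L^2$ inner product for the Fischer inner product. First I would observe that the kernel in the statement is, up to a constant, exactly the harmonic projection computed in Theorem \ref{projSH}: comparing $d_{p,q}$ with $c_{p,q}$ gives
\begin{align*}
	\frac{d_{p,q}}{c_{p,q}}=\frac{(p+q+2n-1)(p+q+2n-2)!}{(2n-1)!}=\frac{(p+q+2n-1)!}{(2n-1)!}=(2n)_{p+q},
\end{align*}
so that $K_{p,q}^S=(2n)_{p+q}\,Proj^{p,q}_0\big(Z_{p,q}^S\big)$. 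In particular $K_{p,q}^S$ is harmonic, and being the image under $Proj^{p,q}_0$ of the symplectic kernel $Z_{p,q}^S\in\cR_{p,q}$ (with $Proj^{p,q}_0$ commuting with $\cE^\dagger$), it lies in $\cH_{p,q}^S$.

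Next I would convert the spherical pairing into a Fischer pairing. Since $K_{p,q}^S$ is a spherical harmonic of degree $p+q$ on $\mR^{4n}\simeq\mC^{2n}$ and the test polynomial $\norm{z}^{2j}\cE^kH_{p-j-k,q-j+k}^S$ is homogeneous of bidegree $(p,q)$, the proportionality of Lemma \ref{cprop}, read with $n$ replaced by $2n$, yields
\begin{align*}
	(2n)_{p+q}\bigscalar{K_{p,q}^S,\norm{z}^{2j}\cE^kH_{p-j-k,q-j+k}^S}_\mS=\bigscalar{K_{p,q}^S,\norm{z}^{2j}\cE^kH_{p-j-k,q-j+k}^S}_\partial.
\end{align*}
Substituting $K_{p,q}^S=(2n)_{p+q}Proj^{p,q}_0(Z_{p,q}^S)$ and using that $Proj^{p,q}_0$ is self-adjoint for the Fischer inner product (it is assembled from $\norm{z}^2$ and $\Delta_z$, which are mutually adjoint by (\ref{dual})), the right-hand side becomes $(2n)_{p+q}\bigscalar{Z_{p,q}^S,\,Proj^{p,q}_0\big(\norm{z}^{2j}\cE^kH_{p-j-k,q-j+k}^S\big)}_\partial$.

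The last step is a two-stage peeling. Because $\cE^kH_{p-j-k,q-j+k}^S$ is harmonic (as $\cE$ commutes with $\Delta_z$ and $H^S$ is harmonic) and of bidegree $(p-j,q-j)$, the defining projection property of $Proj^{p,q}_0$ (Theorem \ref{projkernelC}) gives $Proj^{p,q}_0\big(\norm{z}^{2j}\cE^kH_{p-j-k,q-j+k}^S\big)=\delta_{j,0}\,\cE^kH_{p-k,q+k}^S$, annihilating every $j>0$. For $j=0$ the surviving pairing $\bigscalar{Z_{p,q}^S,\cE^kH_{p-k,q+k}^S}_\partial$ is governed directly by Theorem \ref{reproR}: since $H_{p-k,q+k}^S\in\cR_{p-k,q+k}$, it vanishes unless $k=0$ and otherwise reproduces $H_{p,q}^S(u)$. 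Collecting factors and cancelling the $(2n)_{p+q}$ introduced in the proportionality step produces exactly $\delta_{j,0}\delta_{k,0}\,\cE^kH_{p-j-k,q-j+k}^S(u)$, as claimed.

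The computation is structurally transparent once the objects are lined up, so the main effort is conceptual rather than calculational: correctly threading the Pochhammer constant through the dimension change (the underlying real space is $\mR^{4n}$, so Lemma \ref{cprop} must be applied in complex dimension $2n$), and confirming that $K_{p,q}^S$ genuinely lands in $\cH_{p,q}^S$ so that both reproducing mechanisms are available. The point deserving care is that $Proj^{p,q}_0$ and $Proj^{p,q}_{\cE^\dagger}$ commute --- a consequence of $\cE,\cE^\dagger$ commuting with $\norm{z}^2$ and $\Delta_z$ --- which is precisely what permits the harmonic peeling and the symplectic peeling to be carried out independently and in either order.
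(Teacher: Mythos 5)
Your proposal is correct and follows essentially the same route as the paper's own proof: identifying $K_{p,q}^S=(2n)_{p+q}\,Proj^{p,q}_0\big(Z_{p,q}^S\big)$ via $d_{p,q}=(2n)_{p+q}c_{p,q}$, trading the spherical for the Fischer inner product by Lemma \ref{cprop}, using self-adjointness of $Proj^{p,q}_0$ to peel off the $\norm{z}^{2j}$ factor, and concluding with the symplectic reproducing property of Theorem \ref{reproR}. The only cosmetic difference is that you apply the harmonic projection directly to $\norm{z}^{2j}\cE^kH^S_{p-j-k,q-j+k}$ after noting $\cE^kH^S_{p-j-k,q-j+k}$ is harmonic, whereas the paper first commutes $\cE^k$ past $Proj^{p,q}_0$ --- an equivalent step.
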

\begin{proof}
	When considering the spherical inner product of $K_{p,q}^S$ and a homogeneous polynomial $\norm{z}^{2j}\cE^kH_{p-j-k,q-j+k}^S$ we get
	\begin{align*}
		\bigscalar{K_{p,q}^S,\norm{z}^{2j}\cE^kH_{p-j-k,q-j+k}^S}_\mS=\frac{d_{p,q}}{c_{p,q}}\bigscalar{Proj^{p,q}_0Z_{p,q}^S,\norm{z}^{2j}\cE^kH_{p-j-k,q-j+k}^S}_\mS,
	\end{align*}
	according to Theorem \ref{projSH}. The constant $d_{p,q}$ was chosen so that $d_{p,q}=(2n)_{p+q}c_{p,q}$. We therefore get
	\begin{align*}
		\bigscalar{K_{p,q}^S,\norm{z}^{2j}\cE^kH_{p-j-k,q-j+k}^S}_\mS&=
		(2n)_{p+q}\bigscalar{Proj^{p,q}_0Z_{p,q}^S,\norm{z}^{2j}\cE^kH_{p-j-k,q-j+k}^S}_{\mS}\\
		&=\bigscalar{Proj^{p,q}_0Z_{p,q}^S,\norm{z}^{2j}\cE^kH_{p-j-k,q-j+k}^S}_\partial
	\end{align*}
	by the proportionality of the spherical and the Fischer inner product as stated in Lemma \ref{cprop}. As before we can use the self-adjointness of 
	$Proj^{p,q}_0$, hence
	\begin{align*}
		\bigscalar{K_{p,q}^S,\norm{z}^{2j}\cE^kH_{p-j-k,q-j+k}^S}_\mS&=
		\bigscalar{Z_{p,q}^S,Proj^{p,q}_0\big(\norm{z}^{2j}\cE^kH_{p-j-k,q-j+k}^S\big)}_\partial\\
		&=\bigscalar{Z_{p,q}^S,\cE^kProj^{p,q}_0\big(\norm{z}^{2j}H_{p-j-k,q-j+k}^S\big)}_\partial
	\end{align*}
	where we also used the commutativity of $\cE^k$ with both $\Delta_z$ and $Proj^{p,q}_0$. By construction the projection $Proj^{p,q}_0$ acts on 				homogeneous polynomials $\norm{z}^{2j}H_{p-j-k,q-j+k}^S$ as $\delta_{j,0}$ for the spherical harmonic $H_{p-j-k,q-j+k}^S$. We thus have that
		\begin{align*}
		\bigscalar{K_{p,q}^S,\norm{z}^{2j}\cE^kH_{p-j-k,q-j+k}^S}_\mS&=
		\bigscalar{Z_{p,q}^S,\delta_{j,0}\cE^k\norm{z}^{2j}H_{p-j-k,q-j+k}^S}_\partial.
	\end{align*}
	Because $\norm{z}^{2j}H_{p-j-k,q-j+k}^S$ is a symplectic polynomial of order $(p-k,q+k)$, we can use the reproduction property of the symplectic kernel
	$Z_{p,q}^S$ as stated in Theorem \ref{reproR} to get the final result.
\end{proof}
\section{Plane wave formulas}
\label{planewaves}
In this final section we will express the reproducing kernels in the aforementioned settings with respect to plane waves. The latter are of the form 
$p_k(x)=\scalar{x,z}^k$ (with $z\in\mC^m$) and are clearly homogeneous polynomials (in $x$) of homogeneity $k$. These plane waves are harmonic if
\begin{align*}
	\Delta_x(\scalar{x,z}^k)=k(k-1)\scalar{z,z}\scalar{x,z}^{k-2}=0
\end{align*}
and therefore if $z$ is an isotropic vector. More specifically for $z=t+is$ with $t,s\in\mR^m$, we need that
\begin{align*}
	\scalar{z,z}=\scalar{t+is,t+is}=\norm{t}^2-\norm{s}^2+2i\scalar{t,s}=0,
\end{align*}
and hence $\norm{t}=\norm{s}$ and $\scalar{t,s}=0$. The tuple $(t,s)$ thus lies on a Stiefel manifold of order $2$. We consider two types of Stiefel manifolds that we denote as $St^{(1)}$ and $St^{(2)}$. These are isomorphic to the homogeneous spaces
\begin{align*}
	St^{(1)}(\mR^m)&=SO(m)/SO(m-2)\\
	St^{(2)}(\mC^m)&=U(m)/U(m-2)
\end{align*}
for the special orthogonal and the unitary group, respectively. A method to compute integrals over these manifolds is by acting with a certain series of differential operators on the integrand. In \cite{pizzetti} Pizzetti established such a formula for integrals over the sphere. Generalizations of these formulas exist in various frameworks, e.g. the supersphere (see \cite{super}). A Pizzetti formula for Stiefel manifolds was developed in \cite{stiefel}. We state here the result for the first case $St^{(1)}$. 
\begin{theorem}
\label{Pizzetti}
For the polynomial $f(s,t):\mR^m\times\mR^m\rightarrow\mC$ it holds that
\begin{align*}
\frac{1}{\omega_{m-1}\omega_{m-2}}\int_{St^{(1)}}f(s,t)d\sigma(s)d\sigma(t)=\sum_{j=0}^{\infty}\sum_{\ell=0}^{\floor{\frac{j}{2}}}\frac{\Gamma(\frac{m}{2})}{4^j\Gamma(j+\frac{m}{2})}\frac{\Gamma(\frac{m-1}{2})}{\Gamma(\ell+\frac{m-1}{2})}\bigg(\frac{I_1^{j-2\ell}}{(j-2\ell)!}\frac{I_2^\ell}{\ell!}f(s,t)\bigg)\bigg|_{\substack{s=0\\t=0}}
\end{align*}
with $I_1=\Delta_s+\Delta_t$, $I_2=\Delta_s\Delta_t-\scalar{\nabla_s,\nabla_t}^2$  and $\omega_j=\frac{2\pi^{j/2}}{\Gamma(j/2)}$ the surface of the $(j-1)$-dimensional unit sphere $\mS^{j-1}$.
\end{theorem}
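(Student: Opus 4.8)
\section*{Proof proposal}

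The plan is to regard both sides as linear functionals on the space of polynomials $f(s,t)$ and to exploit that they share the same symmetry group. By linearity it suffices to verify the identity on a spanning set. The normalized integral over $St^{(1)}$ is invariant under the diagonal action of $O(m)$, while on the right-hand side $I_1$ and $I_2$ are polynomials in the $O(m)$-invariant operators $\Delta_s,\Delta_t,\scalar{\nabla_s,\nabla_t}$ and the evaluation at $s=t=0$ is $O(m)$-invariant; hence both functionals are $O(m)$-invariant. For the compact group $O(m)$ any invariant functional $L$ satisfies $L(f)=L(\Pi f)$, where $\Pi f=\int_{O(m)}(g\cdot f)\,dg$ is the Reynolds average, and $\Pi f$ is again an invariant polynomial, so it suffices to treat invariant $f$. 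By the first fundamental theorem of invariant theory for $O(m)$ the invariants in two vector variables are generated by $\norm{s}^2,\norm{t}^2,\scalar{s,t}$, so I may restrict to the monomials $f=\norm{s}^{2a}\norm{t}^{2b}\scalar{s,t}^{c}$. Finally, since the Stiefel measure is invariant under $s\mapsto-s$ and $t\mapsto -t$ (each commuting with the diagonal action) and $I_1,I_2$ are even in each gradient, both sides vanish unless the $s$- and $t$-degrees are even, so I may take $f=\norm{s}^{2a}\norm{t}^{2b}\scalar{s,t}^{2c}$.

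On such a monomial the left-hand side is immediate: on $St^{(1)}$ one has $\norm{s}=\norm{t}=1$ and $\scalar{s,t}=0$, so $\frac{1}{\omega_{m-1}\omega_{m-2}}\int_{St^{(1)}}\norm{s}^{2a}\norm{t}^{2b}\scalar{s,t}^{2c}\,d\sigma(s)d\sigma(t)=\delta_{c,0}$. The theorem thus amounts to showing that the operator series on the right produces exactly this Kronecker delta.

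To evaluate the right-hand side I would first recall the geometric source of the two families of constants. Fibering $St^{(1)}$ over $\mS^{m-1}$ with fibre the unit sphere of $s^{\perp}$ gives
\begin{align*}
\frac{1}{\omega_{m-1}\omega_{m-2}}\int_{St^{(1)}}f\,d\sigma(s)d\sigma(t)=\frac{1}{\omega_{m-1}}\int_{\mS^{m-1}}\Big(\frac{1}{\omega_{m-2}}\int_{s^{\perp}\cap\mS^{m-1}}f(s,t)\,d\sigma(t)\Big)d\sigma(s),
\end{align*}
and applying the classical Pizzetti formula to the fibre (with induced Laplacian $\Delta_t-\scalar{s,\nabla_t}^2$, valid because $\norm{s}=1$) and then to $\mS^{m-1}$ explains why the constants factor as a product of the $\mS^{m-1}$-Pizzetti weights $\frac{\Gamma(\frac m2)}{4^{p}p!\,\Gamma(p+\frac m2)}$ and the $\mS^{m-2}$-Pizzetti weights $\frac{\Gamma(\frac{m-1}{2})}{4^{\ell}\ell!\,\Gamma(\ell+\frac{m-1}{2})}$, with $j=p+2\ell$. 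For the rigorous computation on the monomial I would instead apply $I_1^{j-2\ell}I_2^{\ell}$ directly and evaluate at the origin, organizing the bookkeeping through the two commuting $\mathfrak{sl}_2$-triples $\{\norm{s}^2,\Delta_s,2\mE_s+m\}$ and $\{\norm{t}^2,\Delta_t,2\mE_t+m\}$ together with the mixed triple $\{\scalar{s,t},\scalar{\nabla_s,\nabla_t},\mE_s+\mE_t+m\}$; these are the generators of the Howe dual $\mathfrak{sp}(4,\mR)$ of $O(m)$ on two vector variables, and they reduce every required moment to elementary products of Pochhammer symbols.

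The main obstacle is the final combinatorial collapse. On $f=\norm{s}^{2a}\norm{t}^{2b}\scalar{s,t}^{2c}$ only the single value $j=a+b+2c$ survives the evaluation at $0$, and the $\ell$-sum of the moments $I_1^{j-2\ell}I_2^{\ell}f\big|_{0}$, weighted by the stated constants, must reduce to $\delta_{c,0}$. The delicate part is the action of $I_2^{\ell}=\big(\Delta_s\Delta_t-\scalar{\nabla_s,\nabla_t}^2\big)^{\ell}$: since $\scalar{\nabla_s,\nabla_t}$ lowers the $\scalar{s,t}$-degree while $\Delta_s\Delta_t$ does not, expanding $I_2^{\ell}$ produces competing contributions indexed by how often the mixed operator is used. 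I expect the resulting $\ell$-sum to be a terminating ${}_2F_1$ at unit argument, summable by the Chu--Vandermonde identity, forcing the whole expression to equal $\delta_{c,0}$ and thereby matching the left-hand side. Verifying that this hypergeometric identity holds with exactly the normalization $\frac{\Gamma(\frac m2)}{4^{j}\Gamma(j+\frac m2)}\frac{\Gamma(\frac{m-1}{2})}{\Gamma(\ell+\frac{m-1}{2})}$ is the crux of the proof.
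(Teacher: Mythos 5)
First, a point of comparison: the paper itself contains no proof of this theorem --- it is imported verbatim from Coulembier--Kieburg \cite{stiefel} --- so your proposal has to stand on its own as a proof rather than be measured against an argument in the text. Your reduction framework is sound and is a legitimate way to attack the statement: both sides are $O(m)$-invariant linear functionals on polynomials, the Reynolds-averaging step is valid for a compact group, the first fundamental theorem correctly reduces matters to $f=\norm{s}^{2a}\norm{t}^{2b}\scalar{s,t}^{c}$, the parity argument disposes of odd $c$, and on $St^{(1)}$ (where $\norm{s}=\norm{t}=1$, $\scalar{s,t}=0$) the left-hand side is indeed $\delta_{c,0}$. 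The degree count forcing $j=a+b+2c$ is also correct.

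The genuine gap is that the entire analytic content of the theorem --- the identity
\[
\sum_{\ell=0}^{\floor{\frac{j}{2}}}\frac{\Gamma(\frac{m}{2})\,\Gamma(\frac{m-1}{2})}{4^{j}\Gamma(j+\frac{m}{2})\Gamma(\ell+\frac{m-1}{2})}\,\frac{I_1^{j-2\ell}I_2^{\ell}}{(j-2\ell)!\,\ell!}\Big(\norm{s}^{2a}\norm{t}^{2b}\scalar{s,t}^{2c}\Big)\Big|_{s=t=0}=\delta_{c,0},\qquad j=a+b+2c,
\]
--- is never proved; you state that you \emph{expect} a terminating ${}_2F_1$ at unit argument summable by Chu--Vandermonde, and you flag this yourself as the crux. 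That expectation also looks optimistic as stated: since $\Delta_s$, $\Delta_t$, $\scalar{\nabla_s,\nabla_t}$ are constant-coefficient and hence all commute, expanding $I_1^{j-2\ell}=\sum_u\binom{j-2\ell}{u}\Delta_s^u\Delta_t^{j-2\ell-u}$ and $I_2^{\ell}=\sum_r\binom{\ell}{r}(-1)^r(\Delta_s\Delta_t)^{\ell-r}\scalar{\nabla_s,\nabla_t}^{2r}$ reduces everything to the mixed moments $\Delta_s^{\alpha}\Delta_t^{\beta}\scalar{\nabla_s,\nabla_t}^{\gamma}\big(\norm{s}^{2a}\norm{t}^{2b}\scalar{s,t}^{2c}\big)\big|_{0}$, and these are themselves nontrivial multiple sums (the invariant monomials are not Fischer-orthogonal), so one faces a triple summation rather than a single ${}_2F_1$; a check such as $a=b=0$, $c=1$, where the two terms give $\frac{1}{m+2}-\frac{1}{m+2}=0$, confirms consistency but is far from the general case. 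Moreover, your heuristic explanation of the constants via the fibration is incorrect as bookkeeping: composing the $\mS^{m-2}$-fibre Pizzetti weights with the $\mS^{m-1}$ weights under $j=p+2\ell$ would produce $4^{-(p+\ell)}\Gamma(p+\tfrac{m}{2})^{-1}$, whereas the theorem has $4^{-j}\Gamma(j+\tfrac{m}{2})^{-1}$; the discrepancy arises precisely because the fibre operator $\Delta_t-\scalar{s,\nabla_t}^2$ depends on $s$ and does not commute with $\Delta_s$, and resumming those corrections into the constant-coefficient operators $I_1,I_2$ is exactly the hard part of \cite{stiefel}. So the reduction is good, but the proof is incomplete at the one point where all the work lies.
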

\noindent Note that in the above expression $\int_{St^{(1)}}d\sigma(s)d\sigma(t)$ can be interpreted as the integral over the $(m-1)$-dimensional sphere $\mS^{m-1}$ with respect to $t$ and the integral over the $(m-2)$-dimensional subsphere $\mS^{m-2}_\perp$ with respect to $s$, that is perpendicular to $t\in\mS^{m-1}$.\\
Our aim is to integrate the plane wave $\scalar{x,z}^k\scalar{y,\zbar}^k$ over the manifold $St^{(1)}$ to show that the result equals the reproducing kernel $K_k(x,y)$ of the spherical harmonics. In order to apply the previous theorem it is necessary to know the actions of the operators $I_1$ and $I_2$ on these plane waves. To simplify computations we will let them act on $f_k(z)=\frac{\scalar{x,z}^k}{k!}\frac{\scalar{y,\zbar}^k}{k!}$.
\begin{lemma}
The operators $I_1=\Delta_s+\Delta_t$ and $I_2=\Delta_s\Delta_t-\scalar{\nabla_s,\nabla_t}^2$ act on the function $f_k(z)=\frac{\scalar{x,z}^k}{k!}\frac{\scalar{y,\zbar}^k}{k!}$ with $z=t+is$ as
\begin{align*}
	I_1(f_k)&=4\scalar{x,y}f_{k-1}\\
	I_2(f_k)&=4\big(\scalar{x,y}^2-\norm{x}^2\norm{y}^2\big)f_{k-2}.
\end{align*}
\end{lemma}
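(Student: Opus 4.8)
The plan is to transfer the whole computation into the complex structure $z=t+is$, under which $I_1$ and $I_2$ become constant-coefficient operators built from holomorphic and anti-holomorphic derivatives, and $f_k$ splits into a purely holomorphic factor times a purely anti-holomorphic one. First I would record the chain rule coming from $z_j=t_j+is_j$, namely $\diff{t_j}=\diff{z_j}+\diffc{z_j}$ and $\diff{s_j}=i(\diff{z_j}-\diffc{z_j})$, and abbreviate the three mutually commuting second-order operators $\mathcal{D}_z=\sum_j\diff{z_j}^2$, $\mathcal{D}_{\bar z}=\sum_j\diffc{z_j}^2$ and $\Delta_z=\sum_j\diff{z_j}\diffc{z_j}$.

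Substituting the chain rule into $\Delta_t=\sum_j\diff{t_j}^2$ and $\Delta_s=\sum_j\diff{s_j}^2$ gives $\Delta_t=\mathcal{D}_z+2\Delta_z+\mathcal{D}_{\bar z}$ and $\Delta_s=-\mathcal{D}_z+2\Delta_z-\mathcal{D}_{\bar z}$, while $\scalar{\nabla_s,\nabla_t}=\sum_j\diff{s_j}\diff{t_j}=i(\mathcal{D}_z-\mathcal{D}_{\bar z})$. A short manipulation then yields $I_1=\Delta_s+\Delta_t=4\Delta_z$, together with $\Delta_s\Delta_t=4\Delta_z^2-(\mathcal{D}_z+\mathcal{D}_{\bar z})^2$ and $\scalar{\nabla_s,\nabla_t}^2=-(\mathcal{D}_z-\mathcal{D}_{\bar z})^2$, so that the $\mathcal{D}_z^2$ and $\mathcal{D}_{\bar z}^2$ contributions cancel in the difference and $I_2=4(\Delta_z^2-\mathcal{D}_z\mathcal{D}_{\bar z})$.

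The remaining ingredient is the elementary action of $\Delta_z$, $\mathcal{D}_z$ and $\mathcal{D}_{\bar z}$ on $f_k=\frac{u^kv^k}{(k!)^2}$, where $u=\scalar{x,z}$ is holomorphic ($\diffc{z_j}u=0$, $\diff{z_j}u=x_j$) and $v=\scalar{y,\zbar}$ is anti-holomorphic ($\diff{z_j}v=0$, $\diffc{z_j}v=y_j$). From these one reads off $\Delta_z f_k=\scalar{x,y}f_{k-1}$, hence $\Delta_z^2 f_k=\scalar{x,y}^2 f_{k-2}$, and $\mathcal{D}_z\mathcal{D}_{\bar z}f_k=\norm{x}^2\norm{y}^2 f_{k-2}$, the last equality using the factorial identity $\frac{k(k-1)}{k!\,(k-2)!}=\frac{1}{((k-2)!)^2}$. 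Inserting these into the operator identities of the previous step gives $I_1(f_k)=4\scalar{x,y}f_{k-1}$ and $I_2(f_k)=4(\scalar{x,y}^2-\norm{x}^2\norm{y}^2)f_{k-2}$ at once.

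I expect the only genuine obstacle to be the bookkeeping in the operator rewriting of the second step, where the factors of $i$ and the sign from $i^2=-1$ must be tracked with care; everything else is mechanical, precisely because $u$ and $v$ each depend on only one of $z$, $\zbar$. One could instead differentiate $f_k$ directly in the real variables $s$ and $t$, but for $I_2$ this is a fourth-order computation that is considerably more cumbersome, so passing through the complex derivatives is the cleaner route.
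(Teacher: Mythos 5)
Your proposal is correct and follows essentially the same route as the paper's proof: both pass to complex derivatives via $\diff{t_j}=\diff{z_j}+\diffc{z_j}$, $\diff{s_j}=i(\diff{z_j}-\diffc{z_j})$, rewrite the operators as $I_1=4\Delta_z$ and $I_2=4\big(\Delta_z^2-\scalar{\nabla_z,\nabla_z}\scalar{\nabla_{\zbar},\nabla_{\zbar}}\big)$, and then exploit the splitting of $f_k$ into a holomorphic factor $\scalar{x,z}^k$ and an anti-holomorphic factor $\scalar{y,\zbar}^k$. As a minor point in your favor, your sign bookkeeping for $I_2$ is the consistent one: the paper's first display states $I_2=-4\big(\Delta_z^2+\scalar{\nabla_z,\nabla_z}\scalar{\nabla_{\zbar},\nabla_{\zbar}}\big)$, a typo that it silently corrects to your expression in the very next line where the operator is applied to $f_k$.
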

\begin{proof}
	To simplify the computations we express the operators $I_1$ and $I_2$ with respect to the complex gradients $\nabla_z=\frac{1}{2}(\nabla_t-i\nabla_s)$
	and $\nabla_{\zbar}=\frac{1}{2}(\nabla_t+i\nabla_s)$. Therefore, with $\nabla_t=\nabla_z+\nabla_{\zbar}$ and 
	$\nabla_s=i(\nabla_z-\nabla_{\zbar})$, we have
	\begin{align*}
		I_1&=\Delta_t+\Delta_s=4\Delta_z\\
		I_2&=\Delta_t\Delta_s-\scalar{\nabla_t,\nabla_s}^2=-4\bigg(\Delta_z^2+\scalar{\nabla_z,\nabla_z}\scalar{\nabla_{\zbar},\nabla_{\zbar}}\bigg).
	\end{align*}
	The action of $I_1$ on $f_k(z)=\frac{\scalar{x,z}^k}{k!}\frac{\scalar{y,\zbar}^k}{k!}$ is now easily found to be
	\begin{align*}
	I_1\big(f_k\big)&=4\Delta_z\Big(\frac{\scalar{x,z}^k}{k!}\frac{\scalar{y,\zbar}^k}{k!}\Big)=4\sum_{j=1}^m\diff{z_j}\diff{\zbar_j}							\Big(\frac{\scalar{x,z}^k}{k!}\frac{\scalar{y,\zbar}^k}{k!}\Big)\\
	&=4\sum_{j=1}^mx_jy_j\frac{\scalar{x,z}^{k-1}}{(k-1)!}\frac{\scalar{y,\zbar}^{k-1}}{(k-1)!}=4\scalar{x,y} f_{k-1}.
	\end{align*}
	For the action of $I_2$ on $f_k(z)$ it holds that
	\begin{align*}
		I_2\big(f_k\big)&=4\big(\Delta_z^2-\scalar{\nabla_z,\nabla_z}\scalar{\nabla_{\zbar},\nabla_{\zbar}}\big)
		\Big(\frac{\scalar{x,z}^k}{k!}\frac{\scalar{y,\zbar}^k}{k!}\Big)\\
		&=\frac{1}{4}I_1^2f_k(z)-4\sum_{\ell=1}^m\diff{z_\ell}^2\sum_{j=1}^m\diff{\zbar_j}^2\frac{\scalar{x,z}^k}{k!}\frac{\scalar{y,\zbar}^k}{k!}.
	\end{align*}
	In both terms the degree of $f_k$ is lowered by $2$, hence
	\begin{align*}
	I_2(f_k)	&=4\scalar{x,y}^2f_{k-2}(z)-4\norm{x}^2\norm{y}^2f_{k-2}\\
	&=4\Big(\scalar{x,y}^2-\norm{x}^2\norm{y}^2\Big)f_{k-2}.
	\end{align*}
\end{proof}
\noindent Now we can show that the integral of $\scalar{x,t+is}^k\scalar{y,t-is}^k$ over $St^{(1)}$ is indeed the reproducing kernel $K_k(x,y)$. Note that this result was already established in \cite{radon} in a different way in the framework of an inverse Szeg\H{o}-Radon projection. We will prove the statement in the following theorem through a direct computation by means of the generalized Pizzetti formula in Theorem \ref{Pizzetti}.
\begin{theorem}
\label{Frank}
The reproducing kernel of the space of spherical harmonics of degree $k$ can be expressed as
\begin{align}
	\label{kernel}
	K_k(x,y)=\frac{\lambda_k\dim{\cH_k}}{\omega_{m-1}\omega_{m-2}}\int_{St^{(1)}}\scalar{x,t+is}^k\scalar{y,t-is}^kd\sigma(s)d\sigma(t),
\end{align}
with  $\lambda_k={k+\mu\choose\mu}$, $\mu=\frac{m}{2}-1$ and $\dim{\cH_k}$ as in (\ref{dim}).
\end{theorem}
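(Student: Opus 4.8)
The plan is to apply the Pizzetti formula of Theorem \ref{Pizzetti} directly to the polynomial integrand and then to match the resulting finite sum against the Gegenbauer kernel of Theorem \ref{Gegenbauer}. Writing $z=t+is$, the integrand equals $(k!)^2f_k(z)$ with $f_k$ as in the preceding lemma, so Theorem \ref{Pizzetti} expresses the normalised integral as
\begin{align*}
\frac{1}{\omega_{m-1}\omega_{m-2}}\int_{St^{(1)}}\scalar{x,t+is}^k\scalar{y,t-is}^kd\sigma(s)d\sigma(t)=(k!)^2\sum_{j=0}^\infty\sum_{\ell=0}^{\floor{\frac j2}}\frac{\Gamma(\frac m2)}{4^j\Gamma(j+\frac m2)}\frac{\Gamma(\frac{m-1}2)}{\Gamma(\ell+\frac{m-1}2)}\Big[\frac{I_1^{j-2\ell}}{(j-2\ell)!}\frac{I_2^\ell}{\ell!}f_k\Big]_{z=0}.
\end{align*}
Since the sum is finite for a polynomial integrand, no convergence issue arises.

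First I would iterate the two identities of the preceding lemma. Because $I_1$ and $I_2$ differentiate only in $z$ and $\zbar$, the scalar factors $\scalar{x,y}$ and $\scalar{x,y}^2-\norm{x}^2\norm{y}^2$ that they produce depend only on $x,y$ and are therefore inert under any further application, giving
\begin{align*}
I_1^{j-2\ell}I_2^\ell f_k=4^{j-\ell}\scalar{x,y}^{j-2\ell}\big(\scalar{x,y}^2-\norm{x}^2\norm{y}^2\big)^\ell f_{k-j}.
\end{align*}
Evaluating at $z=0$ annihilates every term except $f_0=1$, which forces $j=k$ and collapses the double series to a single sum over $\ell=0,\dots,\floor{\frac k2}$. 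Factoring out $\norm{x}^k\norm{y}^k$ by means of the angular variable $t=\scalar{x,y}/(\norm{x}\norm{y})$, so that $\scalar{x,y}^{k-2\ell}(\scalar{x,y}^2-\norm{x}^2\norm{y}^2)^\ell=\norm{x}^k\norm{y}^k\,t^{k-2\ell}(t^2-1)^\ell$, turns the integral into $\norm{x}^k\norm{y}^k$ times an explicit polynomial $\sum_\ell c_\ell\,t^{k-2\ell}(t^2-1)^\ell$ in $t$.

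The main step, and the principal obstacle, is to recognise this polynomial as a constant multiple of $C_k^\mu(t)$ with $\mu=\frac m2-1$. I would expand $(t^2-1)^\ell$ by the binomial theorem, interchange the order of summation, and read off the coefficient of $t^{k-2s}$; the remaining inner sum over $\ell$ is a Gauss hypergeometric series ${}_2F_1\big(-\frac{k-2s}2,\frac{1-(k-2s)}2;\mu+\frac12+s;1\big)$, which is summable in closed form by Gauss's theorem (equivalently the Chu--Vandermonde identity). After simplification with the duplication formula for the Gamma function, the coefficient of $t^{k-2s}$ equals an $s$-independent constant times the classical Gegenbauer coefficient $\frac{(-1)^s(\mu)_{k-s}2^{k-2s}}{s!(k-2s)!}$; the $s$-independence is exactly what identifies the polynomial with $C_k^\mu(t)$. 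This yields
\begin{align*}
\frac{1}{\omega_{m-1}\omega_{m-2}}\int_{St^{(1)}}\scalar{x,t+is}^k\scalar{y,t-is}^kd\sigma(s)d\sigma(t)=(k!)^2\frac{\Gamma(\mu+1)\Gamma(2\mu)}{\Gamma(k+\mu+1)\Gamma(2\mu+k)}\norm{x}^k\norm{y}^k C_k^\mu(t).
\end{align*}

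Finally I would substitute $\lambda_k=\binom{k+\mu}{\mu}$ and $\dim{\cH_k}$ from (\ref{dim}), written with $m=2\mu+2$, and verify by a direct Gamma-function computation that $\lambda_k\dim{\cH_k}$ times the constant above equals $\frac{k+\mu}{\mu}$. Comparison with the expression $K_k(x,y)=\frac{k+\mu}{\mu}\norm{x}^k\norm{y}^kC_k^\mu(t)$ of Theorem \ref{Gegenbauer} then reproduces the reproducing kernel exactly, establishing (\ref{kernel}).
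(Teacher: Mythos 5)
Your proposal is correct and follows essentially the same route as the paper's own proof: Pizzetti's formula with the lemma's actions of $I_1$ and $I_2$ collapses the double series to $j=k$, the single sum is reorganised by a binomial expansion and interchange of summation into a terminating ${}_2F_1\big(s-\tfrac{k}{2},\,s-\tfrac{k-1}{2};\,s+\tfrac{m-1}{2};1\big)$ evaluated by Gauss's theorem, and the result is identified as the Gegenbauer polynomial $C_k^\mu$ -- exactly the paper's computation, with your final constant $(k!)^2\,\Gamma(\mu+1)\Gamma(2\mu)/\big(\Gamma(k+\mu+1)\Gamma(2\mu+k)\big)$ checking out against $\frac{1}{\lambda_k\dim{\cH_k}}\frac{k+\mu}{\mu}$. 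The only blemish is notational: you reuse $t$ both for the Stiefel variable in $z=t+is$ and for the angular variable $\scalar{x,y}/(\norm{x}\norm{y})$, which is harmless here but worth renaming (the paper writes $\scalar{\xi,\eta}$).
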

\begin{proof}
The operators $I_1$ and $I_2$ lower the degree of homogeneity (in $z$ and $\zbar$) by $1$ and $2$ respectively. Acting with $I_1^{j-2\ell}I_2^\ell$ on $f_k(z)=\frac{\scalar{x,z}^k}{k!}\frac{\scalar{y,\zbar}^k}{k!}$ according to Theorem \ref{Pizzetti} yields a non-zero result if and only if $(j-2\ell)+2\ell=k$ and therefore $j=k$. Thus with $\vartheta_\ell=\frac{\Gamma(\frac{m}{2})}{\Gamma(k+\frac{m}{2})}\frac{\Gamma(\frac{m-1}{2})}{\Gamma(\ell+\frac{m-1}{2})}$ and
\begin{align}
\label{Lk}
	L_k(x,y) = \frac{1}{\omega_{m-1}\omega_{m-2}}\int_{St^{(1)}}f_k(z)d\sigma(t)d\sigma(s)
\end{align} 
Theorem \ref{Pizzetti} reads as
\begin{align*}
	L_k(x,y)=\sum_{\ell=0}^{\floor{\frac{k}{2}}}\frac{\vartheta_\ell}{4^k}\bigg(\frac{I_1^{k-2\ell}}{(k-2\ell)!}
	\frac{I_2^\ell}{\ell!}f_k(z)\bigg)\bigg|_{z=0}.
\end{align*}
Applying $I_1$ and $I_2$ repeatedly according to the previous lemma results in
\begin{align*}
L_k(x,y)&=\sum_{\ell=0}^{\floor{\frac{k}{2}}}
\frac{\vartheta_\ell}{4^k(k-2\ell)!\ell!}4^\ell\Big(\scalar{x,y}^2-\norm{x}^2\norm{y}^2\Big)^\ell\bigg(4^{k-2\ell}\scalar{x,y}^{k-2\ell}\bigg)\\
&=\sum_{\ell=0}^{\floor{\frac{k}{2}}}\frac{\vartheta_\ell}{4^\ell(k-2\ell)!\ell!}\Big(\scalar{x,y}^2-\norm{x}^2\norm{y}^2\Big)^\ell\scalar{x,y}^{k-2\ell}.
\end{align*}
The powers of $\norm{x}$ and $\norm{y}$ can be pulled out of the summation, hence
\begin{align*}
	L_k(x,y)&=\norm{x}^k\norm{y}^k\sum_{\ell=0}^{\floor{\frac{k}{2}}}
	\frac{\vartheta_\ell}{4^\ell(k-2\ell)!\ell!}\Big(\scalar{\xi,\eta}^2-1\Big)^\ell\scalar{\xi,\eta}^{k-2\ell},
\end{align*}
with $\xi=\frac{x}{\norm{x}}$ and $\eta=\frac{y}{\norm{y}}$. Using the binomial formula to compute $\Big(\scalar{\xi,\eta}^2-1\Big)^\ell$ gives
\begin{align*}
	L_k(x,y)&=\norm{x}^k\norm{y}^k\sum_{\ell=0}^{\floor{\frac{k}{2}}}
	\frac{\vartheta_\ell\scalar{\xi,\eta}^{k-2\ell}}{4^\ell(k-2\ell)!\ell!}\sum_{\rho=0}^\ell{\ell\choose\rho}(-1)^{\rho}\scalar{x,y}^{2(\ell-\rho)}\\
	&=\norm{x}^k\norm{y}^k\sum_{\ell=0}^{\floor{\frac{k}{2}}}\sum_{\rho=0}^\ell\frac{\vartheta_\ell{\ell\choose\rho}(-1)^{\rho}\scalar{\xi,\eta}^{k-2\rho}}
	{4^\ell(k-2\ell)!\ell!}
\end{align*}
Changing the order of summation and using the definition of $\vartheta_\ell$ results in
\begin{align*}
L_k(x,y)&=\norm{x}^k\norm{y}^k\frac{\Gamma(\frac{m}{2})\Gamma(\frac{m-1}{2})}{\Gamma(k+\frac{m}{2})}\sum_{\rho=0}^{\floor{\frac{k}{2}}}\frac{(-1)^{\rho}(2\scalar{\xi,\eta})^{k-2\rho}}{2^{k-2\rho}}\sum_{\ell=\rho}^{\floor{\frac{k}{2}}}\frac{{\ell\choose\rho}}{4^\ell(k-2\ell)!\ell!\Gamma(\ell+\frac{m-1}{2})}.
\end{align*}
One can easily show that the sum over $\ell$ can be expressed in terms of a hypergeometric function $_2F_1$ evaluated in $1$ as
\begin{align*}
	\sum_{\ell=\rho}^{\floor{\frac{k}{2}}}\frac{{\ell\choose\rho}}{4^\ell(k-2\ell)!\ell!\Gamma(\ell+\frac{m-1}{2})}
	=\frac{{}_2F_1(\rho-\frac{k}{2},\rho-\frac{k-1}{2};\rho+\frac{m-1}{2};1)}
	{4^\rho\rho!\Gamma\big(\rho+\frac{m-1}{2}\big)(k-2\rho)!}.
\end{align*}
Using Gauss's hypergeometric theorem we find that
\begin{align*}
	L_k(x,y)&=\norm{x}^k\norm{y}^k
	\frac{\Gamma(\frac{m-1}{2})\Gamma(\frac{m}{2})}{\Gamma(k+\frac{m}{2})}\sum_{\rho=0}^{\floor{\frac{k}{2}}}
	\frac{(-1)^{\rho}(2\scalar{\xi,\eta})^{k-2\rho}}{2^{k-2\rho}}\Bigg(\frac{1}{8\sqrt{\pi}}
	\frac{\Gamma(\frac{m}{2}-1+k-\rho)2^{m+k-2\rho}}{(k-2\rho)!\rho!(m+k-3)!}\Bigg)\\
	&=\norm{x}^k\norm{y}^k\frac{\Gamma(\frac{m-1}{2})\Gamma(\frac{m}{2})\Gamma(\frac{m}{2}-1)}{\Gamma(k+\frac{m}{2})(m+k-3)!}\frac{2^{m-3}}{\sqrt{\pi}}			\sum_{\rho=0}^{\floor{\frac{k}{2}}}(-1)^{\rho}\frac{\Gamma(\frac{m}{2}-1+k-\rho)}{\Gamma(\frac{m}{2}-1)(k-2\rho)!\rho!}(2\scalar{\xi,\eta})^{k-2\rho}.
\end{align*}
We divided and multiplied by $\Gamma(\frac{m}{2}-1)$ to find the sum over $\rho$ to be the Gegenbauer polynomial of degree $k$ with parameter 
$\mu=\frac{m}{2}-1$. This concludes the proof as
\begin{align*}
L_k(x,y)&=\frac{1}{\lambda_k\dim{\cH_k}(k!)^2}\frac{k+\mu}{\mu}\norm{x}^k\norm{y}^kC_k^\mu\big(\scalar{\xi,\eta}\big)=\frac{1}{\lambda_k\dim{\cH_k}(k!)^2}K_k(x,y).
\end{align*}
\end{proof}
\begin{remark}
	Note that the proportionality of $K_k(x,y)$ and the integral of $\scalar{x,z}^k\scalar{y,\zbar}^k$ over $St^{(1)}$ follow immediately from 
	the uniqueness of the reproducing kernel as the zonal harmonic according to \cite{stein}. With $L_k(x,y)$ as defined in (\ref{Lk}) it is clear that 
	$L_k\in\cH_k$. In order to be zonal, i.e. to only depend on $\scalar{x,y}$, it is sufficient to show that $L_k(x,y)=L_k(Rx,Ry)$ for any
	$R\in SO(m)$. This follows directly from the symmetry of $St^{(1)}$ and we have thus that $K_k(x,y)$ is proportional to $L_k(x,y)$. 
\end{remark}
An alternative representation of the kernel $K_k(x,y)$ has been found by Sherman in \cite{sherman} (see also \cite{yang}). Here a rational function is integrated over a certain $(m-2)$-dimensional sphere. We state this result in the following theorem, so the reader may compare it with Theorem \ref{Frank}.
\begin{theorem}
	For $t\in\mS^{m-1}$, the sphere of dimension $m-2$ that is perpendicular to $t$ is denoted by $\mS^{m-2}_\perp$. When $\xi\in\mS^{m-1}$ and 
	$\eta\in\mS^{m-1}\setminus\mS^{m-2}_\perp$ it holds that
	\begin{align*}
		K_k(\xi,\eta)=\frac{\dim{\cH_k}}{\omega_{m-2}}\int_{\mS^{m-2}_\perp}\frac{(sgn{\scalar{\eta,s}})^{m-2}
		\scalar{\xi,t+is}^k}{\scalar{\eta,t+is}^{k+m-2}}d\sigma(s).
	\end{align*}
\end{theorem}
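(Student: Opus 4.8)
The plan is to show that the right-hand side, regarded as a function of $\xi$, lies in $\cH_k$, and then to verify directly that it reproduces $\cH_k$; by uniqueness of the reproducing kernel it must then coincide with $K_k$. Put $w=t+is$. Since $s\perp t$ and $\norm{s}=\norm{t}=1$, the vector $w$ is isotropic, so for each fixed $s$ the factor $\scalar{\xi,w}^k$ is the restriction to $\mS^{m-1}$ of a harmonic polynomial of homogeneity $k$ in $\xi$, while the remaining factor $(\mathrm{sgn}\,\scalar{\eta,s})^{m-2}/\scalar{\eta,w}^{k+m-2}$ does not involve $\xi$. Hence the integral over $\mS^{m-2}_\perp$ is a superposition of harmonic plane waves and defines an element $\tilde K_k(\cdot,\eta)\in\cH_k$; the denominator never vanishes because $\eta\notin\mS^{m-2}_\perp$ forces $\scalar{\eta,t}\neq 0$. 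A change of variables $s\mapsto-s$, using the antipodal invariance of $\mS^{m-2}_\perp$, relates $\tilde K_k$ to its complex conjugate up to the sign $(-1)^{m-2}$; reconciling this with the reality of $K_k$ in Theorem~\ref{Gegenbauer} is exactly what forces the presence of the factor $(\mathrm{sgn}\,\scalar{\eta,s})^{m-2}$, and keeping track of it across the parity of $m$ is a point that must be handled with care.

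To verify the reproducing identity it suffices to test $\tilde K_k(\cdot,\eta)$ against the harmonic plane waves $H_a(x)=\scalar{x,a}^k$ with $a\in\mC^m$ isotropic, since these span $\cH_k$. Writing out $\scalar{\tilde K_k(\cdot,\eta),H_a}_\mS$ and interchanging the two integrations by Fubini, the inner integral over $\mS^{m-1}$ becomes an average of the form $\frac{1}{\omega_{m-1}}\int_{\mS^{m-1}}\scalar{\xi,w}^k\,\overline{H_a(\xi)}\,d\sigma(\xi)$. This is a classical Funk--Hecke integral: as a function of the direction it is a multiple of a single spherical harmonic, and since $w$ is isotropic (so it cannot be normalized directly) one argues by homogeneity and analytic continuation in $w$ from the real case, obtaining a constant $\gamma_{m,k}$ (independent of $w$) times the value at the complex point $w$ of the harmonic $H_a$ (equivalently of its conjugate). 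Carrying out this step removes the $\xi$-integration entirely and collapses the problem to a single rational integral over the $(m-2)$-sphere.

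The remaining integral is the main obstacle. After the Funk--Hecke step one is left to prove an identity of the shape
\[
\frac{\dim\cH_k\,\gamma_{m,k}}{\omega_{m-2}}\int_{\mS^{m-2}_\perp}\frac{(\mathrm{sgn}\,\scalar{\eta,s})^{m-2}\,\scalar{w,a}^k}{\scalar{\eta,w}^{k+m-2}}\,d\sigma(s)=\scalar{\eta,a}^k .
\]
To evaluate the left-hand side I would decompose $s$ along the direction $e$ of the projection $\eta_\perp$ of $\eta$ onto $t^\perp$: writing $s=\cos\phi\,e+\sin\phi\,s'$ with $s'\in\mS^{m-3}$ orthogonal to both $t$ and $e$, one has $\scalar{\eta,w}=\scalar{\eta,t}+i\norm{\eta_\perp}\cos\phi$, so the denominator depends on $s$ only through $\cos\phi$ and the measure factors as $\sin^{m-3}\phi\,d\phi\,d\sigma(s')$. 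Expanding the numerator $\scalar{w,a}^k$ in powers of $\cos\phi$, $\sin\phi$ and $\scalar{s',a}$ and integrating term by term, the $s'$-integrals over $\mS^{m-3}$ are elementary beta-type integrals that annihilate the odd powers, while each surviving $\phi$-integral, together with the sign factor, is evaluated as a contour integral: the substitution $\zeta=e^{i\phi}$ turns it into an integral over the unit circle whose only interior singularity is the simple pole arising from the zero of $\scalar{\eta,t}+i\norm{\eta_\perp}\cos\phi$, and the residue there is a high-order derivative producing the appropriate powers of $\scalar{\eta,t}$ and $\norm{\eta_\perp}$. Reassembling these pieces recovers $\scalar{\eta,a}^k$, and matching the accumulated constants $\gamma_{m,k}$, $\dim\cH_k$, $\omega_{m-2}$ and $\omega_{m-3}$ fixes the normalization. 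The delicate part throughout is the sign factor $(\mathrm{sgn}\,\scalar{\eta,s})^{m-2}$, which selects the correct residue and guarantees that the construction yields the real kernel in both parities of $m$. An alternative would be to try to deduce the formula from the Stiefel representation of Theorem~\ref{Frank}, but there the integration is carried out over both $s$ and $t$ with a polynomial integrand, so no direct reduction to the present fixed-$t$, rational integrand seems available, and I would therefore pursue the direct verification above.
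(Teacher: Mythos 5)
First, a point of order: the paper contains no proof of this statement. It is Sherman's theorem, quoted verbatim from \cite{sherman} (a proof is in \cite{yang}) solely so the reader can compare it with the Stiefel-integral representation of Theorem \ref{Frank}; so your attempt can only be judged on its own merits, not against an in-paper argument. On those merits, your opening reductions are sound and can even be streamlined: since $t+is$ is isotropic, harmonicity in $\xi$ and membership in $\cH_k$ are immediate, the isotropic plane waves $\scalar{x,a}^k$ do span $\cH_k$, and the Funk--Hecke-with-analytic-continuation detour is unnecessary because the spherical pairing of two isotropic plane waves follows at once from the proportionality in Theorem \ref{tprop}:
\begin{align*}
\frac{1}{\omega_{m-1}}\int_{\mS^{m-1}}\scalar{\xi,\bar w}^k\scalar{\xi,a}^k\,d\sigma(\xi)=\frac{k!\,\scalar{\bar w,a}^k}{2^k\big(\frac m2\big)_k},\qquad w=t+is.
\end{align*}
So the reduction to a single rational integral over $\mS^{m-2}_\perp$ is legitimate.

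The genuine gap is in your third paragraph, which is a plan rather than a proof, and the one concrete device you propose fails exactly where the difficulty sits. After the substitution $\zeta=e^{i\phi}$, the factor $(\mathrm{sgn}\scalar{\eta,s})^{m-2}=(\mathrm{sgn}\cos\phi)^{m-2}$ is, for odd $m$, not the boundary value of any meromorphic function, so the claim that ``the only interior singularity is the simple pole'' is false: you would be integrating over two open arcs, not a closed contour, and the residue theorem does not apply as stated. Worse, your own $s\mapsto-s$ symmetry, pushed one line further, shows the plan cannot terminate for odd $m$: the involution sends the integrand to $(-1)^{m-2}$ times its complex conjugate, so the candidate kernel satisfies $\overline{\tilde K_k}=(-1)^{m}\tilde K_k$ and is therefore purely imaginary when $m$ is odd. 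Concretely, for $m=3$, $k=0$ the right-hand side of the stated formula evaluates to $-i$ times a strictly positive number, which cannot equal $K_0=1$; no matching of the constants $\gamma_{m,k}$, $\dim\cH_k$, $\omega_{m-2}$, $\omega_{m-3}$ can repair a phase. What you flagged as ``a point that must be handled with care'' is in fact a phase obstruction in the statement as printed, to be reconciled with the conventions of \cite{sherman} and \cite{yang} (an implicit power of $i$, or $s+it$ in place of $t+is$, does make the $m=3$ test come out real and positive) before any verification along your lines can close. For even $m$ the sign factor is trivially $1$ and your outline is plausible, but the residue evaluation and the assembly of $\scalar{\eta,a}^k$ --- the actual content of Yang's proof --- remain to be carried out.
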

When considering complex plane waves of the form $\scalar{z,\overline{s+t}}^p\scalar{\zbar,s-t}^q$, with $z,s,t\in\mC^n$, one can use the same strategy as before to find an integral representation of the reproducing kernel $K_{p,q}(z,u)$ of complex harmonics. Complex harmonic plane waves have to satisfy
\begin{align*}
	\Delta_z\big(\scalar{z,\overline{s+t}}^p\scalar{\zbar,s-t}^q\big)
	&=pq\scalar{s-t,\overline{s+t}}\scalar{z,\overline{s+t}}^{p-1}\scalar{\zbar,s-t}^{q-1}=0,
\end{align*}
and hence
\begin{align*}
	\scalar{s-t,\overline{s+t}}=\norm{s}^2-\norm{t}^2+\scalar{s,\bar{t}}-\scalar{\bar{s},t}=0.
\end{align*}
This is the case if the tuple $(t,s)$ lies on the complex Stiefel manifold $St^{(2)}$. Here the complex vector $t$ is on the unit sphere $\mS^{2n-1}$ of (real) dimension $2n-1$ and $s$ on the sphere $\mS^{2n-3}$ that is orthogonal to $t$ with respect to the Hermitian inner product. A Pizzetti formula for integrals over $St^{(2)}$, similar to Theorem \ref{Pizzetti}, can be found in \cite{stiefel}. We restate this result for the case of polynomials
in the following theorem.
\begin{theorem}
\label{Pizzetti2}
For a polynomial $f(v,w):\mR^{2n}\times \mR^{2n}\rightarrow\mC$ it holds that
\begin{align*}
\frac{1}{\omega_{2n-1}\omega_{2n-3}}\int_{St^{(2)}}f(v,w)d\sigma(v)d\sigma(w)=
\sum_{j=0}^\infty\sum_{\ell=0}^{\floor{\frac{j}{2}}}\frac{(n-1)!}{4^j(n+j-1)!}\frac{(n-2)!}{(n+\ell-2)!}
\bigg(\frac{I_1^{j-2\ell}}{(j-2\ell)!}\frac{I_2^\ell}{\ell!}f(v,w)\bigg)\bigg|_{\substack{w=0\\v=0}}
\end{align*}
with $v,w\in\mR^{2n}$, $I_1=\Delta_w+\Delta_v$, $I_2=\Delta_w\Delta_v-\scalar{\nabla_w,\nabla_v}^2-\scalar{\nabla_w,J\nabla_v}^2$ and $J=\begin{bmatrix}0 & \mathbb{I}_n\\-\mathbb{I}_n &0\end{bmatrix}$.
\end{theorem}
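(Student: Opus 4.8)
The plan is to regard both sides of the asserted identity as linear functionals on the polynomial algebra in $(v,w)$ and to show that they coincide after projecting onto the $U(n)$-invariants. Write $\Lambda(f)$ for the normalized integral on the left and $\Phi(f)=\sum_{j,\ell}(\dots)\,(I_1^{j-2\ell}I_2^\ell f)(0)$ for the differential-operator series on the right. First I would check that each functional factors through the projection onto the $U(n)$-isotypic component of the trivial type. For $\Lambda$ this is immediate, since $St^{(2)}$ is a $U(n)$-homogeneous space carrying an invariant measure, so $\Lambda$ annihilates every non-trivial isotypic piece. For $\Phi$ one notes that $I_1$ and $I_2$ commute with the $U(n)$-action: the Laplacians and the mixed form $\scalar{\nabla_v,\nabla_w}$ are already $O(2n)$-invariant, while the twisted form $\scalar{\nabla_v,J\nabla_w}$ is invariant precisely under $U(n)=O(2n)\cap GL(n,\mC)$, the stabilizer of $J$. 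Since evaluation at the origin is the Fischer pairing $\scalar{1,\cdot}_\partial$ against the invariant constant, $\Phi$ too sees only the invariant part of $f$.

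By the first fundamental theorem for $U(n)$ acting on a pair of vectors, the invariants are generated by $a=\norm{v}^2$, $b=\norm{w}^2$, $c=\scalar{v,w}$ and $d=\scalar{v,Jw}$ (the last two being the real and imaginary parts of the Hermitian product of the associated complex vectors). By linearity it then suffices to prove $\Phi(a^ib^jc^kd^l)=\delta_{k,0}\delta_{l,0}$ for all exponents. The $\Lambda$-side of this reduced identity is trivial: on $St^{(2)}$ one has $a=b=1$ and $c=d=0$, so the invariant monomial restricts to the constant $\delta_{k,0}\delta_{l,0}$, while $\Lambda(1)=1$.

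To attack $\Phi$ I would pass to Fischer duality. Since $\Delta_v,\Delta_w,\scalar{\nabla_v,\nabla_w},\scalar{\nabla_v,J\nabla_w}$ are Fischer-adjoint to multiplication by $\norm{v}^2,\norm{w}^2,\scalar{v,w},\scalar{v,Jw}$ respectively, the operators $I_1$ and $I_2$ are adjoint to multiplication by $P_1=a+b$ and $P_2=ab-c^2-d^2$. Using $(Dg)(0)=\scalar{1,Dg}_\partial$ one gets $(I_1^{j-2\ell}I_2^\ell g)(0)=\scalar{P_1^{j-2\ell}P_2^\ell,g}_\partial$, so that $\Phi(g)=\scalar{R,g}_\partial$ for the single fixed series $R=\sum_{j,\ell}(\dots)\,P_1^{j-2\ell}P_2^\ell$ assembled from the Gamma-factor coefficients in the statement. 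The whole problem is thereby converted into evaluating the Fischer pairings $\scalar{R,a^ib^jc^kd^l}_\partial$.

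The hard part will be this last combinatorial evaluation: expanding $R$ in the generators and pairing against the monomial produces a double sum in $j$ and $\ell$ whose inner summation I expect to collapse, by a Vandermonde/Gauss-type hypergeometric summation, to exactly $\delta_{k,0}\delta_{l,0}$; the precise Gamma-function coefficients quoted in the theorem are what render the resulting ${}_2F_1$ summable by Gauss's theorem, mirroring the mechanism already seen in the proof of Theorem \ref{Frank}. A concrete alternative, useful as a cross-check, is to fiber $St^{(2)}$ as the $\mS^{2n-1}$-bundle of first frame vectors with $\mS^{2n-3}$-fiber in the complex-orthogonal complement $v^{\perp_{\mC}}$, and to apply the classical sphere Pizzetti formula twice, in real dimensions $2n$ and $2n-2$. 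There the obstacle shifts to recombination: the inner integration uses the Laplacian of $v^{\perp_{\mC}}$, that is $\Delta_w$ with the two second derivatives along $v$ and $Jv$ removed, which depends on $v$ and hence does not commute with $\Delta_v$, and one must show that the resulting non-symmetric double series reorganizes into the symmetric invariants $I_1$ and $I_2$ with the stated coefficients.
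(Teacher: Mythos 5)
Your overall strategy is sound, and it is worth noting that the paper itself offers no proof of Theorem \ref{Pizzetti2} at all: it is quoted from the Coulembier--Kieburg paper \cite{stiefel}, whose derivation proceeds quite differently, so what you propose would be an independent argument. The reduction you set up is correct in every step that you actually carry out: both $\Lambda$ and $\Phi$ are $U(n)$-invariant functionals (the measure on $St^{(2)}$ is invariant, and $I_1$, $I_2$ commute with the $U(n)$-action since $J$ is stabilized), so averaging over the group shows both factor through the invariant projection; the first fundamental theorem for $U(n)$ acting on two vectors does give the four generators $a=\norm{v}^2$, $b=\norm{w}^2$, $c=\scalar{v,w}$, $d=\scalar{v,Jw}$; the restriction of $a^ib^jc^kd^l$ to $St^{(2)}$ is indeed the constant $\delta_{k,0}\delta_{l,0}$; and the Fischer-duality rewriting $\Phi(g)=\scalar{R,g}_\partial$ with $R$ built from $P_1=a+b$ and $P_2=ab-c^2-d^2$ is legitimate, since these are real constant-coefficient operators and the series is finite on each polynomial.

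The genuine gap is that you stop exactly where the theorem lives: the identity $\scalar{R,a^ib^jc^kd^l}_\partial=\delta_{k,0}\delta_{l,0}$ is asserted as an expected ``Vandermonde/Gauss-type collapse'' but never proved, and every Gamma factor in the statement is certified only by that computation. Moreover, this step is harder than the single ${}_2F_1(\cdot;1)$ evaluation in the proof of Theorem \ref{Frank}, because the invariant monomials are not Fischer-orthogonal: for instance $\scalar{c^2,ab}_\partial=8n\neq 0$, and $I_2$ applied to a mixed monomial produces cross-terms in all four generators, so the pairing is a genuinely multi-index Gram-type sum rather than one hypergeometric series. Even the simplest nontrivial case $f=c^2$ requires an exact cancellation between two distinct $(j,\ell)$ cells: one finds $I_1^2c^2\big|_0=16n$ and $I_2c^2=8n(1-n)$, whose weighted contributions $\tfrac{1}{2(n+1)}$ and $-\tfrac{1}{2(n+1)}$ cancel only because the coefficients are exactly right --- this is the mechanism your argument must control for arbitrary $(i,j,k,l)$, including the infinite family $\Phi(a^ib^j)=1$ with $k=l=0$. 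To close the gap you would need either a closed-form evaluation of the multi-sum (e.g.\ by pairing $R$ against plane-wave generating functions in the four invariants, or by deriving recursions in $(i,j,k,l)$ from the adjoint relations for $P_1$ and $P_2$), or to carry out your alternative iterated-Pizzetti route, where, as you correctly observe, the fiber Laplacian $\Delta_w-(\scalar{v,\nabla_w})^2-(\scalar{Jv,\nabla_w})^2$ fails to commute with $\Delta_v$ and the reorganization into powers of $I_1$ and $I_2$ is precisely the unresolved combinatorial content. As it stands, the proposal is a correct reduction plus a conjecture, not a proof.
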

\noindent Our aim is to express the reproducing kernel $K_{p,q}$ as an integral of plane waves over $St^{(2)}$. In order to apply the previous theorem for this computation it is useful to know the action of the operators $I_1$ and $I_2$ on complex plane waves.
\begin{lemma}
\label{action}
	The action of the operators $I_1$ and $I_2$ as stated in Theorem \ref{Pizzetti2} on the function 
	\begin{align*}
		f_{p,q}(s,t)=\frac{\scalar{z,\overline{t+s}}^p}{p!}\frac{\scalar{\zbar,t-s}^q}{q!}
		\frac{\scalar{u,\overline{t-s}}^q}{q!}\frac{\scalar{\ubar,t+s}^p}{p!}
	\end{align*}
	is given by
	\begin{align*}
		I_1(f_{p,q})&=8\big(\scalar{z,\ubar}f_{p-1,q}+\scalar{\zbar,u}f_{p,q-1}\big)\\
		I_2(f_{p,q})&=64\big(\abs{\scalar{z,\ubar}}^2-\norm{z}^2\norm{u}^2\big)f_{p-1,q-1}.
	\end{align*}
\end{lemma}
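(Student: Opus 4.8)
\noindent The plan is to follow the strategy of the preceding real-variable lemma. Under the identification $\mC^n\cong\mR^{2n}$ we set $v\leftrightarrow s$ and $w\leftrightarrow t$, so that the real Laplacians become $\Delta_v=4\Delta_s$ and $\Delta_w=4\Delta_t$ with $\Delta_s=\sum_{j}\partial_{s_j}\partial_{\bar{s}_j}$ and $\Delta_t=\sum_j\partial_{t_j}\partial_{\bar{t}_j}$, while the matrix $J$ implements multiplication by $i$ on each $\mC^n$ factor. A short bookkeeping with the Wirtinger derivatives then turns the two real contractions $\scalar{\nabla_w,\nabla_v}$ and $\scalar{\nabla_w,J\nabla_v}$ into the mixed holomorphic/antiholomorphic contractions $\scalar{\nabla_t,\nabla_{\bar{s}}}$ and $\scalar{\nabla_{\bar{t}},\nabla_s}$, and yields $I_1=4(\Delta_t+\Delta_s)$ together with $I_2=16\big(\Delta_t\Delta_s-\scalar{\nabla_t,\nabla_{\bar{s}}}\scalar{\nabla_{\bar{t}},\nabla_s}\big)$.

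\noindent Next I would introduce $\zeta=t+s$ and $\eta=t-s$, so that $f_{p,q}$ factors, up to the factorials, as a product of a function $\scalar{z,\bar{\zeta}}^p\scalar{\ubar,\zeta}^p$ of $(\zeta,\bar{\zeta})$ and a function $\scalar{\zbar,\eta}^q\scalar{u,\bar{\eta}}^q$ of $(\eta,\bar{\eta})$. Using $\nabla_t=\nabla_\zeta+\nabla_\eta$ and $\nabla_s=\nabla_\zeta-\nabla_\eta$ one computes that $I_1=8(\Delta_\zeta+\Delta_\eta)$, and that in $I_2$ all ``unbalanced'' fourth-order pieces cancel, leaving $I_2=64\big(\Delta_\zeta\Delta_\eta-\scalar{\nabla_\zeta,\nabla_{\bar{\eta}}}\scalar{\nabla_{\bar{\zeta}},\nabla_\eta}\big)$.

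\noindent It then remains to apply these to the factored $f_{p,q}$. The operator $\Delta_\zeta$ contracts a $\partial_{\bar{\zeta}}$ against the factor $\scalar{z,\bar{\zeta}}^p$ and a $\partial_\zeta$ against $\scalar{\ubar,\zeta}^p$, producing $\scalar{z,\ubar}$ times the degree-lowered factor; likewise $\Delta_\eta$ produces $\scalar{\zbar,u}$. Reinstating the factorials gives $I_1(f_{p,q})=8(\scalar{z,\ubar}f_{p-1,q}+\scalar{\zbar,u}f_{p,q-1})$. For $I_2$, the term $\Delta_\zeta\Delta_\eta$ yields $\abs{\scalar{z,\ubar}}^2 f_{p-1,q-1}$, while the mixed term $\scalar{\nabla_\zeta,\nabla_{\bar{\eta}}}\scalar{\nabla_{\bar{\zeta}},\nabla_\eta}$ pairs $\scalar{z,\bar{\zeta}}$ with $\scalar{\zbar,\eta}$ into $\norm{z}^2$ and $\scalar{\ubar,\zeta}$ with $\scalar{u,\bar{\eta}}$ into $\norm{u}^2$, yielding $\norm{z}^2\norm{u}^2 f_{p-1,q-1}$; taking the difference gives the claimed $I_2$ formula.

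\noindent The main obstacle is the $I_2$ simplification. Compared with the real case, the correction $-\scalar{\nabla_w,\nabla_v}^2$ must be supplemented by $-\scalar{\nabla_w,J\nabla_v}^2$, and the crux is that in complex coordinates these two terms conspire to kill the pieces $\Delta_\zeta^2$, $\Delta_\eta^2$ and the holomorphic-squared pieces $\scalar{\nabla_\zeta,\nabla_{\bar{\eta}}}^2$ and $\scalar{\nabla_{\bar{\zeta}},\nabla_\eta}^2$, leaving only $\Delta_\zeta\Delta_\eta$ and the single mixed product. Establishing the operator identity $\Delta_t\Delta_s-\scalar{\nabla_t,\nabla_{\bar{s}}}\scalar{\nabla_{\bar{t}},\nabla_s}=4\big(\Delta_\zeta\Delta_\eta-\scalar{\nabla_\zeta,\nabla_{\bar{\eta}}}\scalar{\nabla_{\bar{\zeta}},\nabla_\eta}\big)$, and keeping track of the overall factor $64$, is the one genuinely delicate computation; everything else is routine differentiation of powers of linear forms.
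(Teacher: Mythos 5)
Your proposal is correct, and after the shared first step it genuinely diverges from the paper's proof in a useful way. The paper also begins by rewriting the operators in Wirtinger coordinates, obtaining $I_1=4(\Delta_s+\Delta_t)$ and $I_2=16\big(\Delta_s\Delta_t-\scalar{\nabla_s,\nabla_{\tbar}}\scalar{\nabla_{\sbar},\nabla_t}\big)$ exactly as you do (your swap of the roles of $v$ and $w$ relative to the paper is harmless, since $I_1$ and $I_2$ are symmetric under $w\leftrightarrow v$: $J^T=-J$ and $J$ enters only squared). But from there the paper works directly in the variables $(s,t)$: it expands $\Delta_s f_{p,q}$ and $\Delta_t f_{p,q}$ on the four-factor product $A_pB_qC_qD_p$, each yielding four terms whose sum collapses to the $I_1$ formula, and it then \emph{omits} the $I_2$ computation entirely as ``quite tedious''. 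Your substitution $\zeta=t+s$, $\eta=t-s$ is precisely what removes that tedium: $f_{p,q}$ factors as $g_p(\zeta,\bar{\zeta})\,h_q(\eta,\bar{\eta})$, and writing $a=\Delta_\zeta$, $b=\Delta_\eta$, $c=\scalar{\nabla_\zeta,\nabla_{\bar{\eta}}}$, $d=\scalar{\nabla_{\bar{\zeta}},\nabla_\eta}$, one has $\Delta_s=a+b-c-d$, $\Delta_t=a+b+c+d$, $\scalar{\nabla_s,\nabla_{\tbar}}=a-b+c-d$, $\scalar{\nabla_{\sbar},\nabla_t}=a-b-c+d$, so that (all operators commuting) $\Delta_s\Delta_t-\scalar{\nabla_s,\nabla_{\tbar}}\scalar{\nabla_{\sbar},\nabla_t}=(a+b)^2-(a-b)^2-(c+d)^2+(c-d)^2=4(ab-cd)$; this confirms your identity $I_2=64\big(\Delta_\zeta\Delta_\eta-\scalar{\nabla_\zeta,\nabla_{\bar{\eta}}}\scalar{\nabla_{\bar{\zeta}},\nabla_\eta}\big)$ and the cancellation of the pieces $a^2$, $b^2$, $c^2$, $d^2$ that you flag as the crux. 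On the factored function the evaluation is then a two-line Leibniz computation, as you say: $\Delta_\zeta\Delta_\eta$ produces $\scalar{z,\ubar}\scalar{\zbar,u}\,f_{p-1,q-1}=\abs{\scalar{z,\ubar}}^2f_{p-1,q-1}$, while the mixed term contracts $z_j$ against $\zbar_j$ and $\ubar_k$ against $u_k$ to produce $\norm{z}^2\norm{u}^2f_{p-1,q-1}$, and the factorial bookkeeping checks. So your route buys something concrete: it supplies a complete and transparent proof of the $I_2$ action, which the published proof leaves out, at the modest cost of verifying one elementary polarization-type operator identity.
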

\begin{proof}
	The operators $I_1$ and $I_2$ are given with respect to $w=\col{w_1}{w_2}$ and $v=\col{v_1}{v_2}$, where $w_j, v_j\in\mR^n$ for $j=1,2$. 
	With the complex gradients $\nabla_s=\frac{1}{2}(\nabla_{w_1}-i\nabla_{w_2})$ and $\nabla_t=\frac{1}{2}(\nabla_{v_1}-i\nabla_{v_2})$ we get for
	the operators $I_1$ and $I_2$ that
	\begin{align*}
		I_1&=\Delta_w+\Delta_v=4(\scalar{\nabla_s,\nabla_{\sbar}}+\scalar{\nabla_t,\nabla_{\tbar}})=4(\Delta_s+\Delta_t)\\
		I_2&=\Delta_w\Delta_v-\scalar{\nabla_w,\nabla_v}^2-\scalar{\nabla_w,J\nabla_v}^2
		=16\Big(\Delta_s\Delta_t-\scalar{\nabla_s,\nabla_{\tbar}}\scalar{\nabla_{\sbar},\nabla_t}\Big).
	\end{align*}
	When denoting with $f_{p,q}(s,t)=A_pB_qC_qD_p$ where
	\begin{align*}
		A_j&=\frac{\scalar{z,\overline{t+s}}^j}{j!}\hspace{1cm}B_j=\frac{\scalar{\zbar,t-s}^j}{j!}\hspace{1cm}
		C_j=\frac{\scalar{u,\overline{t-s}}^j}{j!}\hspace{1cm}D_j=\frac{\scalar{\ubar,t+s}^j}{j!}
	\end{align*}
	we get for the Laplacians with respect to $s$ and $t$
	\begin{align*}
		\Delta_s\big(f_{p,q}\big)&=\sum_{j=1}^n\diff{s_j}\diff{\sbar_j}\big(A_pB_qC_qD_p\big)
		=\sum_{j=1}^n\diff{s_j}\big(z_jA_{p-1}B_qC_qD_p-u_jA_pB_qC_{q-1}D_p\big)\\
		&=\sum_{j=1}^n-z_j\zbar_jA_{p-1}B_{q-1}C_qD_p+z_j\ubar_jA_{p-1}B_qC_qD_{p-1}
		-u_j\ubar_jA_pB_qC_{q-1}D_{p-1}+\zbar_ju_jA_pB_{q-1}C_{q-1}D_p\\
		&=-\norm{z}^2A_{p-1}B_{q-1}C_qD_p+\scalar{z,\ubar}A_{p-1}B_qC_qD_{p-1}-\norm{u}^2A_pB_qC_{q-1}D_{p-1}+\scalar{\zbar,u}A_pB_{q-1}C_{q-1}D_p
	\end{align*}
	and equivalently
	\begin{align*}
		\Delta_t\big(f_{p,q}\big)=\norm{z}^2A_{p-1}B_{q-1}C_qD_p+\scalar{z,\ubar}A_{p-1}B_qC_qD_{p-1}
		+\norm{u}^2A_pB_qC_{q-1}D_{p-1}+\scalar{\zbar,u}A_pB_{q-1}C_{q-1}D_p.
	\end{align*}
	The action of $I_1$ on $f_{p,q}$ is thus
	\begin{align*}
		I_1\big(f_{p,q}\big)=4\big(\Delta_s+\Delta_t\big)\big(f_{p,q}\big)=8\Big(\scalar{z,\ubar}f_{p-1,q}+\scalar{\zbar,u}f_{p,q-1}\Big).
	\end{align*}
	The action of $I_2$ on $f_{p,q}$ is computed in a similar way. As this calculation is quite tedious while providing no further insight,
	we omit it.
\end{proof}
\begin{theorem}
\label{planeC}
	The reproducing kernel of the space of complex harmonics of degree $(p,q)$ can be expressed as
	\begin{align*}
		K_{p,q}(z,u)=\frac{\lambda_{p,q}\dim{\cH_{p,q}}}{\omega_{2n-1}\omega_{2n-3}}\int_{St^{(2)}}
		\scalar{z,\overline{t+s}}^p\scalar{\zbar,t-s}^q\scalar{u,\overline{t-s}}^q\scalar{\ubar,t+s}^pd\sigma(s)d\sigma(t),
	\end{align*}
	where $\lambda_{p,q}=\frac{(k+n-1)!}{2^k(n-1)!(k-\nu)!}$, $k=p+q$, $\nu=\min(p,q)$, $\dim{\cH_{p,q}}$ 
	as in (\ref{dimC}) and $s,t,z,u\in\mC^{n}$.
\end{theorem}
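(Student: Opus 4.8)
The plan is to mirror the proof of Theorem \ref{Frank}, now using the complex Pizzetti formula of Theorem \ref{Pizzetti2} together with the operator actions of Lemma \ref{action}. Write the integrand as $(p!)^2(q!)^2 f_{p,q}$ and set $L_{p,q}(z,u)=\frac{1}{\omega_{2n-1}\omega_{2n-3}}\int_{St^{(2)}}f_{p,q}\,d\sigma(s)d\sigma(t)$, so the claim is equivalent to $K_{p,q}=\lambda_{p,q}\dim\cH_{p,q}(p!)^2(q!)^2 L_{p,q}$. By Lemma \ref{action} the operator $I_1$ lowers the total index $p+q$ of $f_{p,q}$ by one and $I_2$ lowers it by two; hence $I_1^{j-2\ell}I_2^\ell f_{p,q}$ lowers it by $j$, and its value at the origin is nonzero only for $j=k:=p+q$. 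Exactly as in Theorem \ref{Frank}, the outer sum of Theorem \ref{Pizzetti2} then collapses to the single term $j=k$.

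The genuinely new feature compared with the real case is that $I_1$ now has two lowering directions, $\scalar{z,\ubar}$ (lowering $p$) and $\scalar{\zbar,u}$ (lowering $q$). First I would apply $I_2^\ell$, producing $64^\ell\big(\abs{\scalar{z,\ubar}}^2-\norm{z}^2\norm{u}^2\big)^\ell f_{p-\ell,q-\ell}$, and then expand $I_1^{k-2\ell}$ by the binomial theorem (the two directions commute, their coefficients being constant in $s,t$). Only one term survives evaluation at $s=t=0$, namely the one reducing $f_{p-\ell,q-\ell}$ to $f_{0,0}=1$: it forces the $\scalar{z,\ubar}$-power to be $p-\ell$ and the $\scalar{\zbar,u}$-power to be $q-\ell$, with weight $\binom{k-2\ell}{p-\ell}$, and is present only for $\ell\le\nu=\min(p,q)$. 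Using $64^\ell 8^{k-2\ell}/4^k=2^k$ and $\binom{k-2\ell}{p-\ell}/(k-2\ell)!=1/\big((p-\ell)!(q-\ell)!\big)$, the collapsed formula reads
\begin{align*}
	L_{p,q}=\frac{2^k(n-1)!(n-2)!}{(n+k-1)!}\sum_{\ell=0}^{\nu}
	\frac{\big(\abs{\scalar{z,\ubar}}^2-\norm{z}^2\norm{u}^2\big)^\ell\scalar{z,\ubar}^{p-\ell}\scalar{\zbar,u}^{q-\ell}}
	{(n+\ell-2)!\,\ell!\,(p-\ell)!\,(q-\ell)!}.
\end{align*}

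Assuming $p\le q$ (the case $p\ge q$ follows by exchanging the roles of $p,q$ and of $\scalar{z,\ubar},\scalar{\zbar,u}$), I would factor out $\scalar{\zbar,u}^{q-p}\norm{z}^{2p}\norm{u}^{2p}$ by means of $\abs{\scalar{z,\ubar}}^2=\tau\norm{z}^2\norm{u}^2$ and $\abs{\scalar{z,\ubar}}^2-\norm{z}^2\norm{u}^2=(\tau-1)\norm{z}^2\norm{u}^2$, where $\tau=\abs{\scalar{z,\ubar}}^2/(\norm{z}^2\norm{u}^2)$ is the angular variable of Theorem \ref{kernelC}. The remaining sum becomes
\begin{align*}
	\frac{1}{(p+n-2)!\,q!}\sum_{\ell=0}^{p}\binom{p+n-2}{p-\ell}\binom{q}{\ell}(\tau-1)^\ell\tau^{p-\ell},
\end{align*}
which I recognize as $\frac{1}{(p+n-2)!\,q!}P_p^{n-2,q-p}(2\tau-1)$ via the standard binomial representation of the Jacobi polynomial. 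This replaces the Gegenbauer step of Theorem \ref{Frank}, and is read off directly from the hypergeometric form rather than through Gauss's theorem.

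Finally I would collect all prefactors and compare with the closed form of $K_{p,q}$ in Theorem \ref{kernelC} to pin down the normalization $\lambda_{p,q}$. That the two are proportional in the first place is in fact clear a priori, as in the remark following Theorem \ref{Frank}: each plane wave is $\Delta_z$-harmonic of bidegree $(p,q)$ by the isotropy condition defining $St^{(2)}$, so $L_{p,q}\in\cH_{p,q}$, while the $U(n)$-symmetry of $St^{(2)}$ forces $L_{p,q}$ to be zonal and hence a scalar multiple of the reproducing kernel. The main obstacle is the bookkeeping imposed by the two-term structure of $I_1$ — isolating the single surviving term of the binomial expansion and then carrying the product of factorials correctly through to the constant $\lambda_{p,q}$; the heavier computation of the $I_2$-action is already supplied by Lemma \ref{action}, whose proof was omitted there as tedious.
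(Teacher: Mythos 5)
Your proposal follows the paper's proof of Theorem \ref{planeC} essentially step for step: the same collapse of the outer Pizzetti sum to $j=k$, the same order of operations ($I_2^\ell$ first, then the binomial expansion of $I_1^{k-2\ell}$ with the single term $a=p-\ell$ surviving evaluation at the origin), and your collapsed formula for $L_{p,q}$ agrees exactly with the paper's corresponding intermediate line. The only methodological differences are cosmetic and favorable: you read off $P_p^{n-2,q-p}(2\tau-1)$ directly from the binomial representation of the Jacobi polynomial, where the paper substitutes $\Phi=2\abs{\scalar{\xi,\bar{\eta}}}^2-1$ and reverses the summation to reach the identical polynomial $P_\nu^{n-2,\abs{p-q}}(\Phi)$; and your a priori proportionality argument (harmonicity of the plane waves plus $U(n)$-zonality) parallels the remark the paper makes after Theorem \ref{Frank} in the real case.

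One substantive warning about the step you deferred: if you carry out the final bookkeeping from your (correct) expression for $L_{p,q}$, you will obtain $\lambda_{p,q}=\frac{(k+n-1)!}{2^k(n-1)!\,p!\,q!}=\frac{(k+n-1)!}{2^k(n-1)!\,\nu!\,(k-\nu)!}$, which differs from the stated constant by the factor $\nu!$. This is not an error on your side. In the paper's proof, when the summand $\frac{1}{(n+\ell-2)!\,\ell!\,(p-\ell)!\,(q-\ell)!}$ is rewritten via binomial coefficients, the prefactor is given as $\frac{2^k(n-1)!(n-2)!}{(k+n-1)!(\nu+n-2)!\,\nu!\,(k-\nu)!}$, whereas matching either line (check $\ell=0$) forces $\frac{2^k(n-1)!(n-2)!}{(k+n-1)!(\nu+n-2)!(k-\nu)!}$ without the $\nu!$; the stated $\lambda_{p,q}$ inherits this slip. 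You can confirm your normalization independently of Pizzetti and of Lemma \ref{action}: set $u=z$ and average over $z\in\mS^{2n-1}$. The trace of the reproducing kernel is $\dim\cH_{p,q}$, while the vectors $\frac{t+s}{\sqrt{2}}$ and $\frac{t-s}{\sqrt{2}}$ are Hermitian-orthonormal for $(t,s)\in St^{(2)}$, so that
\begin{align*}
\frac{1}{\omega_{2n-1}}\int_{\mS^{2n-1}}\abs{\scalar{z,\overline{t+s}}}^{2p}\abs{\scalar{\zbar,t-s}}^{2q}d\sigma(z)
=2^k\,\frac{(n-1)!\,p!\,q!}{(n+k-1)!},
\end{align*}
which forces $\lambda_{p,q}=\frac{(k+n-1)!}{2^k(n-1)!\,p!\,q!}$. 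The two constants coincide exactly when $\nu\leq 1$ (e.g. both pass the check at $p=q=1$, where a direct Stiefel computation gives $L_{1,1}=\frac{4\big(n\abs{\scalar{z,\ubar}}^2-\norm{z}^2\norm{u}^2\big)}{n(n+1)(n-1)}$, in agreement with your collapsed sum), which is why small cases do not detect the discrepancy. So trust your computation through to the end rather than forcing agreement with the printed $\lambda_{p,q}$; note that the same corrected factor then propagates to the symplectic plane wave theorem that cites Theorem \ref{planeC}.
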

\begin{proof}
	With $f_{p,q}(s,t)=A_pB_qC_qD_p$ as in the previous lemma, Pizzetti's formula gives
	\begin{align*}
		&\frac{1}{\omega_{2n-1}\omega_{2n-3}}\int_{St^{(2)}}f_{p,q}(s,t)d\sigma(s)d\sigma(t)
		=\sum_{j=0}^{\infty}\sum_{\ell=0}^{\floor{\frac{j}{2}}}\frac{(n-1)!}{4^j(n+j-1)!}\frac{(n-2)!}{(n+\ell-2)!}
		\bigg(\frac{I_1^{j-2\ell}}{(j-2\ell)!}\frac{I_2^\ell}{\ell!}f_{p,q}\bigg)\bigg|_{\substack{s=0\\t=0}}.
	\end{align*}
	Note that $I_j\big(f_{p,0}\big)=I_j\big(f_{0,q}\big)=0$ for $j=1,2$. In Lemma \ref{action} we see that the operators $I_1$ and $I_2$ can act at 
	most $p+q$ and $\nu=\min(p,q)$ times on $f_{p,q}$ respectively. When denoting with $\nu_j=\min(\floor{\frac{j}{2}},\nu)$ we get
	\begin{align*}
		&\frac{1}{\omega_{2n-1}\omega_{2n-3}}\int_{St^{(2)}}f_{p,q}(s,t)d\sigma(s)d\sigma(t)\\
		&=\sum_{j=0}^{p+q}\sum_{\ell=0}^{\nu_j}\frac{(n-1)!}{4^j(n+j-1)!}\frac{(n-2)!}{(n+\ell-2)!}
		\frac{64^{\ell}}{(j-2\ell)!\ell!}
		\Big(\scalar{z,\ubar}\scalar{\zbar,u}-\norm{z}^2\norm{u}^2\Big)^\ell\bigg(I_1^{j-2\ell}f_{p-\ell,q-\ell}\bigg)\bigg|_{\substack{s=0\\t=0}}\\
		&=\sum_{j=0}^{p+q}\sum_{\ell=0}^{\nu_j}\frac{(n-1)!}{(n+j-1)!}\frac{(n-2)!}{(n+\ell-2)!}
		\frac{2^{j}}{(j-2\ell)!\ell!}\Big(\scalar{z,\ubar}\scalar{\zbar,u}-\norm{z}^2\norm{u}^2\Big)^\ell\\
		&\times\sum_{a=0}^{j}{{j-2\ell}\choose{a}}\scalar{z,\ubar}^a\scalar{\zbar,u}^{j-2\ell-a}
		\bigg(f_{p-\ell-a,q+\ell+a-j}\bigg)\bigg|_{\substack{s=0\\t=0}}.
	\end{align*}
	The expression $\big(f_{p-\ell-a,q+\ell+a-j}\big)\big|_{\substack{s=0\\t=0}}$ is non-zero if and only if 
	$a=p-\ell$ and $j=p+q$. We therefore have that $\nu=\min(p,q)\leq\floor{\frac{p+q}{2}}$ and hence
	\begin{align*}
		&\frac{1}{\omega_{2n-1}\omega_{2n-3}}\int_{St^{(2)}}f_{p,q}(s,t)d\sigma(s)d\sigma(t)\\
		&=\sum_{\ell=0}^{\nu}\frac{(n-1)!}{(p+q+n-1)!}\frac{(n-2)!}{(n+\ell-2)!}
		\frac{2^{p+q}}{(p+q-2\ell)!\ell!}{{p+q-2\ell}\choose{p-l}}\Big(\scalar{z,\ubar}\scalar{\zbar,u}-\norm{z}^2\norm{u}^2\Big)^\ell
		\scalar{z,\ubar}^{p-\ell}\scalar{\zbar,u}^{q-\ell}\\
		&=\sum_{\ell=0}^{\nu}\frac{(n-1)!}{(p+q+n-1)!}\frac{(n-2)!}{(n+\ell-2)!}
		\frac{2^{p+q}}{\ell!(q-l)!(p-l)!}\Big(\scalar{z,\ubar}\scalar{\zbar,u}-\norm{z}^2\norm{u}^2\Big)^\ell
		\scalar{z,\ubar}^{p-\ell}\scalar{\zbar,u}^{q-\ell}.
	\end{align*}
	Because $\nu=\min(p,q)$ and with $k=p+q$ we have that $(q-l)!(p-l)!=(\nu-l)!(k-\nu-l)!$. When pulling powers of $\scalar{z,\ubar}$ and
	$\scalar{\zbar,u}$ out of the sum, and with $\xi=\frac{z}{\norm{z}}$ and $\eta=\frac{u}{\norm{u}}$ it holds that
	\begin{align*}
		&\frac{1}{\omega_{2n-1}\omega_{2n-3}}\int_{St^{(2)}}f_{p,q}(s,t)d\sigma(s)d\sigma(t)\\
		&=\scalar{z,\ubar}^{p-\nu}\scalar{\zbar,u}^{q-\nu}\frac{2^{k}(n-1)!(n-2)!}{(k+n-1)!(\nu+n-2)!\nu!(k-\nu)!}\sum_{\ell=0}^{\nu}
		{\nu+n-2\choose{\nu-l}}{k-\nu\choose{\ell}}\Big(\scalar{z,\ubar}\scalar{\zbar,u}-\norm{z}^2\norm{u}^2\Big)^\ell
		\big(\scalar{z,\ubar}\scalar{\zbar,u}\big)^{\nu-\ell}\\
		&=\scalar{z,\ubar}^{p-\nu}\scalar{\zbar,u}^{q-\nu}\norm{z}^{2\nu}\norm{u}^{2\nu}\frac{2^{k}(n-1)!(n-2)!}{(k+n-1)!(\nu+n-2)!p!q!}\sum_{\ell=0}^{\nu}
		{\nu+n-2\choose{\nu-l}}{k-\nu\choose{\ell}}\bigg(\frac{\scalar{z,\ubar}\scalar{\zbar,u}-\norm{z}^2\norm{u}^2}{\scalar{z,\ubar}
		\scalar{\zbar,u}}\bigg)^\ell\abs{\scalar{\xi,\bar{\eta}}}^{2\nu}.
	\end{align*}
	With $a_{p,q}=\frac{n-1+k}{n-1}{k-\nu+n-2\choose{k-\nu}}$ (as in Theorem \ref{kernelC}) we have
	\begin{align*}
		&\frac{1}{\omega_{2n-1}\omega_{2n-3}}\int_{St^{(2)}}f_{p,q}(s,t)d\sigma(s)d\sigma(t)\\
		&=\scalar{z,\ubar}^{p-\nu}\scalar{\zbar,u}^{q-\nu}\norm{z}^{2\nu}\norm{u}^{2\nu}\frac{a_{p,q}}{\lambda_{p,q}\dim{\cH_{p,q}}(p!q!)^2}\sum_{\ell=0}^{\nu}
		{\nu+n-2\choose{\nu-l}}{k-\nu\choose{\ell}}\bigg(1-\frac{1}{\abs{\scalar{\xi,\bar{\eta}}}^2}\bigg)^\ell
		\abs{\scalar{\xi,\bar{\eta}}}^{2\nu}.
	\end{align*}
	By denoting $\Phi=2\abs{\scalar{\xi,\bar{\eta}}}^2-1$ and thus $\abs{\scalar{\xi,\bar{\eta}}}^2=\frac{\Phi+1}{2}$ we have
	\begin{align*}
		&\frac{1}{\omega_{2n-1}\omega_{2n-3}}\int_{St^{(2)}}f_{p,q}(s,t)d\sigma(s)d\sigma(t)\\
		&=\scalar{z,\ubar}^{p-\nu}\scalar{\zbar,u}^{q-\nu}\norm{z}^{2\nu}\norm{u}^{2\nu}\frac{a_{p,q}}{\lambda_{p,q}\dim{\cH_{p,q}}(p!q!)^2}\sum_{\ell=0}^{\nu}
		{\nu+n-2\choose{\nu-\ell}}{k-\nu\choose{\ell}}\bigg(\frac{\Phi-1}{\Phi+1}\bigg)^\ell\bigg(\frac{\Phi+1}{2}\bigg)^\nu.
	\end{align*}
	When reversing the direction of the summation
	\begin{align*}
		&\frac{1}{\omega_{2n-1}\omega_{2n-3}}\int_{St^{(2)}}f_{p,q}(s,t)d\sigma(s)d\sigma(t)\\
		&=\scalar{z,\ubar}^{p-\nu}\scalar{\zbar,u}^{q-\nu}\norm{z}^{2\nu}\norm{u}^{2\nu}\frac{a_{p,q}}{\lambda_{p,q}\dim{\cH_{p,q}}(p!q!)^2}\sum_{\ell=0}^{\nu}
		{\nu+n-2\choose{\ell}}{\nu+\abs{p-q}\choose{\nu-\ell}}\bigg(\frac{\Phi-1}{2}\bigg)^{\nu-\ell}\bigg(\frac{\Phi+1}{2}\bigg)^{\ell}\\
		&=\scalar{z,\ubar}^{p-\nu}\scalar{\zbar,u}^{q-\nu}\norm{z}^{2\nu}\norm{u}^{2\nu}\frac{a_{p,q}}{\lambda_{p,q}\dim{\cH_{p,q}}(p!q!)^2}P_{\nu}^{n-2,|p-q|}(\Phi)
	\end{align*}
	we find the Jacobi polynomial $P_{\nu}^{n-2,|p-q|}(\Phi)$ of degree $\nu=\min(p,q)$, evaluated in $\Phi=2\abs{\scalar{\xi,\bar{\eta}}}^2-1$.
\end{proof}
From Theorem \ref{planeC} one can derive a plane wave representation of the reproducing kernel of symplectic harmonics $K_{p,q}^S$ that we constructed in Section \ref{symplectic}. This can be done by applying the projection operator $Proj^{p,q}_{\cE^\dagger}$ that we described in Lemma \ref{projSymp}.
\begin{theorem}
	The reproducing kernel of the space of symplectic harmonics can be represented as
	\begin{align*}
		K_{p,q}^S(z,u)=\frac{\lambda_{p,q}}{\omega_{4n-1}\omega_{4n-3}}\int_{St^{(2)}}g_z^{p,q}(s,t)\overline{g_u^{p,q}(s,t)}d\sigma(s,t),
	\end{align*}
	with $\lambda_{p,q}$ as in Theorem \ref{planeC} (for complex dimension $2n$) and the symplectic plane wave\\
	\begin{align*}
		g_z^{p,q}(s,t)=\frac{q-p+1}{p!}{(q+1)!}\scalar{\zbar,t+s}^{q-p}\Big(\scalar{z,\bar{t}-\bar{s}}
		\scalar{\zbar,t+s}+\scalar{z,t+s}_{\hspace{-1.5pt}s}\scalar{\zbar,\bar{t}-\bar{s}}_{\hspace{-1.5pt}s}\Big)^p.
	\end{align*}
\end{theorem}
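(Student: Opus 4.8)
The plan is to obtain the symplectic kernel by applying the symplectic projector $Proj^{p,q}_{\cE^\dagger}$ of Lemma \ref{projSymp} to the complex plane wave formula of Theorem \ref{planeC}, now read in complex dimension $2n$, so that $s,t\in\mC^{2n}$ and the surface factors become $\omega_{4n-1},\omega_{4n-3}$. Writing $F_z(s,t)=\scalar{z,\overline{t+s}}^p\scalar{\zbar,t-s}^q$ for the $z$-dependent part of the integrand, Theorem \ref{planeC} reads $K_{p,q}(z,u)=\frac{\lambda_{p,q}\dim\cH_{p,q}}{\omega_{4n-1}\omega_{4n-3}}\int_{St^{(2)}}F_z\,\overline{F_u}\,d\sigma(s,t)$. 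First I would record that the symplectic harmonic kernel is the symplectic projection of the complex one: combining $K_{p,q}^S=(2n)_{p+q}Proj^{p,q}_0(Z_{p,q}^S)$ with $Z_{p,q}^S=Proj^{p,q}_{\cE^\dagger}(Z_{p,q})$ (Lemma \ref{projC}) and $K_{p,q}=(2n)_{p+q}Proj^{p,q}_0(Z_{p,q})$ (Theorem \ref{projkernelC}), and using the commutativity of $Proj^{p,q}_0$ and $Proj^{p,q}_{\cE^\dagger}$ recorded by the diagram in Section \ref{symplectic}, gives $K_{p,q}^S=Proj^{p,q}_{\cE^\dagger}(K_{p,q})$, the projector acting in $z$. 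Since $Proj^{p,q}_{\cE^\dagger}=\sum_{j=0}^p\gamma_j\cE^j(\cE^\dagger)^j$ is a differential operator in $z$ whose coefficients are independent of $(s,t)$, it commutes with the Stiefel integral, so $K_{p,q}^S(z,u)=\frac{\lambda_{p,q}\dim\cH_{p,q}}{\omega_{4n-1}\omega_{4n-3}}\int_{St^{(2)}}Proj^{p,q}_{\cE^\dagger}(F_z)\,\overline{F_u}\,d\sigma(s,t)$.

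The crux is the explicit evaluation of $Proj^{p,q}_{\cE^\dagger}(F_z)$, which is the two-direction analogue of Lemma \ref{projC}. Setting $c=t+s$ and $d=t-s$, I would first compute the elementary actions $\cE^\dagger\scalar{z,\bar c}=-\scalar{\zbar,\bar c}_s$, $\cE^\dagger\scalar{\zbar,d}=0$, $\cE\scalar{z,\bar c}=0$ and $\cE\scalar{\zbar,d}=\scalar{z,d}_s$, together with the second-order relation $\cE\scalar{\zbar,\bar c}_s=-\scalar{z,\bar c}$; these are the exact counterparts of \eqref{actionEA}--\eqref{actionEC}. Feeding them into $\cE^j(\cE^\dagger)^j$ and summing against $\gamma_j$ reproduces the combinatorics of Lemma \ref{projC}: a multinomial expansion whose inner sum is a ${}_2F_1$ at argument $1$, evaluated by Chu--Vandermonde, collapsing to $Proj^{p,q}_{\cE^\dagger}(F_z)=p!\,q!\,g_z^{p,q}$. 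As a consistency check, specializing $c=d=u$ turns $F_z$ into $p!\,q!\,Z_{p,q}$ and $g_z^{p,q}$ into $Z_{p,q}^S$, so the computation degenerates exactly to Lemma \ref{projC}. I expect this collapse to be the main obstacle, since one must carry the two distinct vectors $c,d$ and the four mixed brackets $\scalar{z,\bar c}$, $\scalar{\zbar,d}$, $\scalar{z,d}_s$, $\scalar{\zbar,\bar c}_s$ through the double summation without the simplification $c=d$ that was available before.

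It then remains to replace the surviving factor $\overline{F_u}$ by $\overline{g_u^{p,q}}$. For this I would use that $K_{p,q}^S$, being the reproducing kernel of $\cH_{p,q}^S$, is symplectic harmonic in its second slot as well: by Hermitian symmetry $\overline{K_{p,q}^S(z,\cdot)}\in\cH^S_{p,q}$, hence $K_{p,q}^S(z,\cdot)\in\cR_{q,p}$ and is fixed by the conjugate projector $Proj^{q,p}_{\cE}=\overline{Proj^{p,q}_{\cE^\dagger}}$ of Remark \ref{symm} acting in $u$. Applying $Proj^{q,p}_{\cE}$ to the displayed integral, commuting it past the $(s,t)$-integration, and using $Proj^{q,p}_{\cE}(\overline{F_u})=\overline{Proj^{p,q}_{\cE^\dagger}(F_u)}=p!\,q!\,\overline{g_u^{p,q}}$ turns the integrand into a multiple of $g_z^{p,q}\,\overline{g_u^{p,q}}$. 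A final reconciliation of the constants --- the factors $p!\,q!$ produced by the two projections, the prefactor $\lambda_{p,q}\dim\cH_{p,q}$ inherited from Theorem \ref{planeC}, and the normalization built into $g_z^{p,q}$ --- yields the stated coefficient $\lambda_{p,q}$. This last bookkeeping is routine but must be done with care, as it is precisely where the dimension factor is absorbed.
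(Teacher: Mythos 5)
Your proposal follows essentially the same route as the paper's own proof: the paper likewise starts from Theorem \ref{planeC} in dimension $4n$, uses (\ref{option1}) to write $K_{p,q}^S=Proj^{p,q}_{\cE^\dagger}(K_{p,q})$, commutes the $z$-projector past the Stiefel integral, evaluates $Proj^{p,q}_{\cE^\dagger}\big(\scalar{z,\overline{t+s}}^p\scalar{\zbar,t-s}^q\big)$ by ``the same computation'' as Lemma \ref{projC} (exactly your two-vector rerun with the four brackets), and then replaces $\overline{F_u}$ by $\overline{g_u^{p,q}}$ via $Proj^{q,p}_{\cE}$ of Remark \ref{symm} acting in $u$, which fixes $K_{p,q}^S$ by symmetry. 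Your accounting of the constants (the factors $p!\,q!$ from each projection and the prefactor $\lambda_{p,q}\dim\cH_{p,q}$) is in fact more careful than the paper's, whose statement and proof silently drop $\dim\cH_{p,q}$ and typeset the normalization of $g_z^{p,q}$ as $\frac{q-p+1}{p!}(q+1)!$ rather than $\frac{q-p+1}{p!\,(q+1)!}$.
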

\begin{proof}
	We can express the complex reproducing kernel $K_{p,q}$ (for dimension $4n$) in terms of plane waves according to Theorem \ref{planeC} as
	\begin{align*}
		K_{p,q}(z,u)=\frac{\lambda_{p,q}}{\omega_{4n-1}\omega_{4n-3}}\int_{St^{(2)}}
		\scalar{z,\overline{t+s}}^p\scalar{\zbar,t-s}^q\scalar{u,\overline{t-s}}^q\scalar{\ubar,t+s}^pd\sigma(s)d\sigma(t).
	\end{align*}
	Applying the projection operator $Proj^{p,q}_{\cE^\dagger}$ (with respect to $z$) to $K_{p,q}$ results in the symplectic kernel $K_{p,q}^S$ according to 		(\ref{option1}). We thus get that
	\begin{align*}
		K_{p,q}^S(z,u)=\frac{\lambda_{p,q}}{\omega_{4n-1}\omega_{4n-3}}\int_{St^{(2)}}
		Proj^{p,q}_{\cE^\dagger}\Big(\scalar{z,\overline{t+s}}^p\scalar{\zbar,t-s}^q\Big)\scalar{u,\overline{t-s}}^q\scalar{\ubar,t+s}^pd\sigma(s)d\sigma(t).
	\end{align*}
	By performing the same computation we used to prove Lemma \ref{projC} we get that 
	$Proj^{p,q}_{\cE^\dagger}\Big(\scalar{z,\overline{t+s}}^p\scalar{\zbar,t-s}^q\Big)=g_z^{p,q}$. Applying
	the symmetric projection $Proj^{q,p}_{\cE}$ (see Remark \ref{symm}) with respect to $u$ will not change the left-hand side of the equation as by symmetry 
	$K_{p,q}^S\in\cH_{q,p}^S$ (with respect to $u$). We therefore have
	\begin{align*}
		K_{p,q}^S(z,u)&=\frac{\lambda_{p,q}}{\omega_{4n-1}\omega_{4n-3}}\int_{St^{(2)}}
		g_z^{p,q}\scalar{u,\overline{t-s}}^q\scalar{\ubar,t+s}^pd\sigma(s)d\sigma(t)\\
		&=\frac{\lambda_{p,q}}{\omega_{4n-1}\omega_{4n-3}}\int_{St^{(2)}}
		g_z^{p,q}Proj^{q,p}_{\cE}\Big(\scalar{u,\overline{t-s}}^q\scalar{\ubar,t+s}^p\Big)d\sigma(s)d\sigma(t)\\
		&=\frac{\lambda_{p,q}}{\omega_{4n-1}\omega_{4n-3}}\int_{St^{(2)}}
		g_z^{p,q}\overline{g_u^{p,q}}d\sigma(s)d\sigma(t).
	\end{align*}
\end{proof}

\end{document}